\newtheorem{theorem}{Theorem}[section]
\newtheorem{algorithm}[theorem]{Algorithm}
\newtheorem{corollary}[theorem]{Corollary}
\newtheorem{definition}[theorem]{Definition}
\newtheorem{lemma}[theorem]{Lemma}
\newtheorem{proposition}[theorem]{Proposition}
\DeclareMathAlphabet{\mathpzc}{OT1}{pzc}{m}{it}
\newcommand{\Supp}[2]{\operatorname{\mathpzc{Supp}}_{#1}(#2)}
\newcommand{\ISupp}[1]{\operatorname{\mathpzc{IntSupp}}(#1)}
\newcommand{\N}[1]{\operatorname{\mathpzc{N}}(#1)}
\newcommand{\Rank}[1]{\operatorname{\mathpzc{R}}(#1)}
\newcommand{\Core}[1]{\operatorname{\mathpzc{Core}}(#1)}
\newcommand{\ConnE}[1]{\operatorname{\mathpzc{ConnE}}(#1)}
\newcommand{\BondE}[1]{\operatorname{\mathpzc{BondE}}(#1)}
\newcommand{\entrada}[2]{\operatorname{in}(#1 \leftarrow #2)}
\newcommand{\salida}[2]{\operatorname{out}(#1 \rightarrow #2)}
\newcommand{\supp}[2]{\operatorname{supp}_{#1}(#2)}
\newcommand{\nulidad}[1]{\operatorname{null}(#1)}
\newcommand{\rank}[1]{\operatorname{rk}(#1)}
\newcommand{\core}[1]{\operatorname{core}(#1)}
\newcommand{\down}[3]{{#1}\!\downharpoonleft_{\scalebox{0.5}{#3}}^{\scalebox{0.5}{#2}}}
\newcommand{\up}[3]{{#1}\!\upharpoonleft_{\scalebox{0.5}{#3}}^{\scalebox{0.5}{#2}}}
\journal{\ldots}
\begin{document}


\begin{frontmatter}
	\title{S-Trees}
	
	\author[daj]{Daniel A Jaume\corref{cor1}}
	\ead{djaume@unsl.edu.ar}
	
	\author[gm]{Gonzalo Molina}
	\ead{lgmolina@unsl.edu.ar}
	
	\author[rs]{Rodrigo Sota}
	\ead{rodrigo.a.sota@gmail.com}
	
	\cortext[cor1]{Corresponding author: Daniel A. Jaume}
	
	\address[daj]{	Departamento de Matem\'{a}ticas.
		Facultad de Ciencias F\'{\i}sico-Matem\'{a}ticas y Naturales. Universidad Nacional de San Luis.
		1er. piso, Bloque II, Oficina 54. Ej\'{e}rcito de los Andes 950.
		San Luis, Rep\'{u}blica Argentina.
		D5700HHW.}
	

\begin{abstract}
	In this paper two new graph operations are introduced, and with them the S-trees are studied in depth. This allows to find \(\{-1,0,1\}\)-basis for all the fundamental subspaces of the adjacency matrix of any tree, and to understand in detail the matching structure of any tree. 
\end{abstract}

\begin{keyword}Trees\sep%
	Eigenvectors\sep
	Null space\sep
	Supports \sep
	Matchings
	\MSC 05C05 \sep 05C50 \sep 05C85 \sep 05C70
\end{keyword}

\end{frontmatter}



\section{Introduction}

Collatz and Sinogowitz \cite{von1957spektren} and Schwenk and Wilson \cite{longuet1950some} posed the problem of characterizing all singular and nonsingular graphs. This problem is very difficult. At present, only some particular cases are know, see the excellent survey \cite{gutman2011nullity}. On the other hand, this problem is very interesting in chemistry, because, as has been shown in Longuet-higgins \cite{longuet1950some}, the occurrence of a zero eigenvalue of a bipartite graph (corresponding to an alternant hydrocarbon) indicates the chemical instability of the molecule which such graph represents.

In recent years there has been a growing interest on the structure of null space of incidence and adjacency matrices of graphs, see \cite{akbari2006kernels}, \cite{akbari20061}, \cite{sander2009simply}.
In particular the existence of \(\{−1, 0, 1\}\)-bases has attracted more
attention, since these kind of bases are usually sparse and easy to handle from the computational point of view. In 2004, in his PhD thesis Sander proposed the following Conjecture: The null space of the adjacency
matrix of every forest has a \(\{−1, 0, 1\}\)-basis, see \cite{sander2004eigenspace}. This was fully answered in \cite{sander2005simply} and \cite{akbari20061}.

In this paper the S-trees are study in deep. They were introduced by Jaume and Molina in \cite{molina_jaume2016tree} in order to settle the Null Decomposition of trees, see Section \ref{SecStrees}.  They allow to fully understand the matching structure, and encodes the null space (and then the nullity) of any tree, thus S-tree is a new way of characterizing all singular trees. They allow to give another proof of the Sander's problem. The S-part of any tree give an easy way to build \(\{-1,0,1\}\)-bases for the null space and the range of any tree. The basis that we found for the null space of a tree are pretty similar to the basis given by the FOX algorithm of Sander and Sander \cite{sander2005simply}. But our constructive linear algebra approach is different from \cite{sander2005simply} and from the existential approach of \cite{akbari20061}. It is interesting to note that recently, in \cite{akbari2004ranks}, a more general question has
been asked: For which graphs is there a \(\{−1, 0, 1\}\)-basis for the null space?

This paper is organized as follows. In Section 2 we set up notation and terminology, and also review some of the standard facts on graphs. In Section 3 we give the definition of S-tree, the null decomposition, and a brief summary of the relevant results of \cite{molina_jaume2016tree}. There is just one result is new in this section: Theorem \ref{Trknulloftrees}. Section 4 deal with two new graph operations over trees: stellare and S-coalescence. They are closed on S-trees and allow us to think about S-trees without linear algebra. Section 5 is concerned with the notion of S-atoms, they are the bricks with which all the S-trees are made. Section 6 is concerned with the notion of S-basic, they encode all the information needed to obtain a \(\{-1,0,1\}\)-basis of a tree. Finally in Section 7 we study the fundamental subspaces associated to the adjacency matrix of any tree and we give \(\{-1,0,1\}\)-bases for all of them. 

\section{Basics and notation}
The material in this section is standard. Our recommendation is that you begin to read the paper  by the next section, and return to this section whether you have any doubt about notation.

In this work we will only consider finite, loopless, simple, connected graphs. Let \(G\) be  a graph:
\begin{enumerate}
	\item \(V(G)\) is the set of vertices of \(G\), and \(v(G)=|V(G)|\) denote its cardinality.
	\item Let \(S \subset V(G)\), the subgraph induced by \(S\) in \(G\) is denoted by  \(G\langle S \rangle\). With \(G-S\) we denote the subgraph of \(G\) obtained by deleting the vertices in \(S\). We usually just write \(G-v\) instead of \(G-\{v\}\), for \(v \in V(G)\).
	\item \(E(G)\) is the set of edges of \(G\), and \(e(G)=|E(G)|\), its cardinality.
	\item Let \(u \sim v\) denote that two vertices \(u\) and \(v\) of \(G\) are neighbors: \(\{u,v\} \in E(G)\).
	\item Let \(N_{G}(v)\) denote the set of neighbors of \(v\) in \(G\), if \(G\) is clear from the context we just write \(N(v)\). The closed neighborhood of \(v\) is \(N[v]=N(v)\cup\{v\}\). For \(S \subset V(G) \) the closed neighborhood of \(S\) is
	\[
	N\left[ S \right]:= \bigcup_{u \in S}N\left[u\right]
	\]
	\item Let \(\deg(v)\), the degree of \(v\), denote the cardinality of \(N(v)\).
	\item A vertex \(v\) of \(G\) is a pendant vertex if \(\deg(v)=1\).
	\item \(A(G)\) is the adjacency matrix of \(G\), if \(G\) is clear from the context we drop \(G\) and just write \(A\).
	\item The rank of \(G\) is the rank of its adjacency matrix: \(\rank{G}:=\rank{A(G)}\).
	\item The null space of \(G\) is the null space of its adjacency matrix: \(\N{G}:=\N{A(G)}\).
	\item The nullity of \(G\) is the nullity of its adjacency matrix: \(\nulidad{G}:=\nulidad{A(G)}\).
	\item The spectrum of \(G\) is the set of different eigenvalues of \(A(G)\), and it will be denoted by \(\sigma (G)\). Given an eigenvalue \(\lambda \in \sigma (G) \) the eigenspace associated to \(\lambda\), denoted by \(\mathcal{E}_{\lambda}(G)\), will be called \(\lambda\)-eigenspace of \(G\).
	\item A set \(S\) of vertices is an independent set if no two vertices in \(S\) are adjacent. \(\alpha(G)\) denote the independence number of \(G\), the cardinality of a maximum independent set. A set\( S \subset V(G)\) is a total dominating set og \(G\) if every vertex in \(G\) is adjacent to
	some vertex of \(S\). 
	\item A matching \(M\) in  \(G\) is a set of pairwise non-adjacent edges; that is, no two edges in \(M\) share a common vertex. A vertex is saturated by a matching, if it is an endpoint of one of the edges in the matching. Otherwise the vertex is non-saturated. The  matching number of \(T\), denoted \(\nu(T)\), is the cardinality of a maximum matching. The number of maximum matchings in \(G\) is \(m(G)\). 
	\item A set \(S \subset V \) is a dominating set of \(G\) if each vertex in \(V(G)\) is either in \(S\) or is adjacent to a vertex in \(S\). The domination number \(\gamma (G)\) is the minimun cardinality of a dominating set of \(G\).
	\item A graph \(G\) is bipartite if its vertices can be partitioned in two sets in such a way that no edge join two vertices in the same set.
\end{enumerate}

Let \(\theta\) denote the null-vector of a given vector space. Given matrix \(A\), its transpose will be denoted by \(A^t\). As usual, \([k]=\{1,\cdots,k\}\). We usually think vectors as functions from the set of vertices of a graph to a field (typically \(\mathbb{R}\)), thus, given a graph \(G\),  \(\mathbb{R}^{G}\) denote the vector space of all the real functions from \(V(G)\) to \(\mathbb{R}\), this vector space has dimension \(v(G)\).

\section{S-trees and null decomposition of trees}  \label{SecStrees}

The S-trees and the null decomposition of trees were introduced in \cite{molina_jaume2016tree}. Here a brief resume of the results in \cite{molina_jaume2016tree} that are needed for the present work.

\begin{definition}[\cite{molina_jaume2016tree}]
	Let \(x\) be a vector of \(\mathbb{R}^{n}\), the \textbf{support} of \(x\) is 
	\[
	\Supp{\mathbb{R}^{n}}{x}:= \lbrace v \in [n] : x_{v} \neq 0 \rbrace 
	\]
	Let \(S\) be a subset of \(\mathbb{R}^n\), then the support of \(S\) is
	\[
	\Supp{\mathbb{R}^{n}}{S}:= \bigcup_{x \in S}  \Supp{\mathbb{R}^{n}}{x}
	\]
	The cardinality of an support is denoted by \(\supp{\mathbb{R}^{n}}{*}:=|\Supp{\mathbb{R}^{n}}{*}| \).
\end{definition}
 
 In this paper only the support of null space  of a tree (i.e. the null space of the adjacency matrix of trees) is considered. Thus \(\Supp{}{T} \) stands for \(\Supp{T}{\N{T}}\). Similarly, \(\supp{}{T}\) stands for \(\supp{T}{\N{T}}\).

\begin{lemma}[Lemma 7, \cite{nylen1998null}] \label{vecsupp}
	Let \(W\) be a subspace of \(\mathbb{R}^{n}\) with dimension \(d>0\). Then there exists a basis \(\{w_{1},\dots,w_{d}\}\) of \(W\) satisfying 
	\[
	\Supp{\mathbb{R}^{n}}{w_{i}}=\Supp{\mathbb{R}^{n}}{W}
	\]
	for all \(i=1, \dots, d\).
\end{lemma}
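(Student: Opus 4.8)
The plan is to identify the support of $W$ as a fixed coordinate set and then build a basis one vector at a time, at each stage avoiding the coordinate conditions that would shrink the support. Write $S := \Supp{\mathbb{R}^n}{W}$. By the definition of the support of a subspace, every $x \in W$ satisfies $\Supp{\mathbb{R}^n}{x} \subseteq S$, so the content of the lemma is really to produce a basis whose members are nonzero on \emph{all} of $S$ simultaneously. For each coordinate $v \in S$ set
\[
W_v := \{x \in W : x_v = 0\}.
\]
Since $v$ lies in the support of $W$, there is some $x \in W$ with $x_v \neq 0$, so $W_v$ is a \emph{proper} subspace of $W$. The crucial observation is that a vector $x \in W$ has support exactly equal to $S$ if and only if $x \notin \bigcup_{v \in S} W_v$.

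The key tool is the standard fact that a vector space over an infinite field (here $\mathbb{R}$) cannot be written as the union of finitely many proper subspaces. I would use this to construct the basis greedily. Suppose linearly independent vectors $w_1, \dots, w_k \in W$, each with full support $S$, have already been found, where $0 \le k < d$. Then $\langle w_1, \dots, w_k\rangle$ is a proper subspace of $W$ (as $k < d$), and together with the finitely many $W_v$ it forms a finite collection of proper subspaces. By the fact above their union is not all of $W$, so we may choose
\[
w_{k+1} \in W \setminus \Big( \langle w_1, \dots, w_k\rangle \cup \bigcup_{v \in S} W_v \Big).
\]
Being outside every $W_v$, the vector $w_{k+1}$ has support exactly $S$; being outside $\langle w_1, \dots, w_k\rangle$, it is linearly independent from the previously chosen vectors. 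Starting from $k=0$ (where the union is just $\bigcup_{v \in S} W_v$, producing a first full-support vector) and iterating until $k = d$ yields the desired basis.

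The only real obstacle is justifying that a finite union of proper subspaces cannot exhaust $W$, and this is exactly where the infinitude of $\mathbb{R}$ is essential: over a finite field the statement can fail. This is a classical lemma provable by induction on the number of subspaces. Alternatively, I could phrase the whole construction in coordinates: fixing any basis identifies $W \cong \mathbb{R}^d$, each condition $x_v = 0$ becomes a proper hyperplane in $\mathbb{R}^d$, and a generic $d$-tuple of coefficient vectors simultaneously avoids all these hyperplanes (giving full support) while having nonzero determinant (giving linear independence), since the bad locus is a proper algebraic subvariety of $\mathbb{R}^{d^2}$ whose complement is dense.
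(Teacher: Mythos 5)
Your proof is correct, but there is nothing in the paper to compare it against: the paper does not prove this lemma, it imports it verbatim as Lemma~7 of \cite{nylen1998null}. Taken on its own terms, your argument is complete and sound. The reduction of the full-support condition to avoiding the finitely many proper subspaces \(W_v=\{x\in W : x_v=0\}\), \(v\in S\), together with the classical fact that a vector space over an infinite field is not a finite union of proper subspaces, does all the work; adding \(\langle w_1,\dots,w_k\rangle\) to the list of subspaces to be avoided at each greedy step secures linear independence, and after \(d\) steps the \(d\) independent full-support vectors must span \(W\) since \(\dim W=d\). For comparison, the usual proof of this statement (the argument in Nylen's paper is of this perturbation type) first produces a single full-support vector \(w\) as a generic linear combination of a basis, extends it to a basis \(\{w,x_2,\dots,x_d\}\), and then replaces each \(x_i\) by \(x_i+t_iw\), choosing \(t_i\) to avoid, for each \(v\in S\), the unique scalar that kills coordinate \(v\). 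Both routes are genericity arguments over \(\mathbb{R}\); yours packages the genericity into one avoidance lemma, which is cleaner and makes explicit the only place where infinitude of the field is used --- a point worth making, since the conclusion genuinely fails over small finite fields (e.g.\ \(W=\mathbb{F}_2^2\) has only one full-support vector, so no such basis exists).
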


\begin{definition}[\cite{molina_jaume2016tree}]
	A tree \(S\) is an \textbf{S-tree} if \(
	N\left[   \Supp{}{S} \right]=V(S) \).
\end{definition}

The tree \(T\) in Figure ~\ref{Fig2} is an S-tree. 

\tikzstyle{every node}=[circle, draw, fill=white!,
inner sep=0.1pt, minimum width=11pt]

\begin{figure}[h]
	\centering
	\begin{tikzpicture}[thick,scale=0.2]%
	\draw 
	(0,0) node{1}
	(-5,0) node[fill=black!]{\textcolor{white}{3}} -- (0,0)
	(0,-5) node[fill=black!]{\textcolor{white}{2}} -- (0,0)
	(0,5) node[fill=black!]{\textcolor{white}{4}} -- (0,0)
	(5,0) node[fill=black!]{\textcolor{white}{5}} -- (0,0)
	(10,0) node{6} -- (5,0)
	(10,5) node[fill=black!]{\textcolor{white}{7}} -- (10,0)
	(10,-5) node[fill=black!]{\textcolor{white}{8}} -- (10,0);
	
	\end{tikzpicture}
	\caption{T is an S-tree}
	\label{Fig2}
\end{figure}
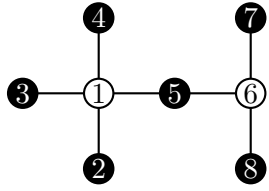

The adjacency matrix of \(T\), given in Figure ~\ref{Fig2} is:
\[
A(T)=
\left[
{ 
	\begin{array}{cccccccc}
	0 & 1 & 1 & 1 & 1 & 0 & 0 & 0\\
	1 & 0 & 0 & 0 & 0 & 0 & 0 & 0\\
	1 & 0 & 0 & 0 & 0 & 0 & 0 & 0\\
	1 & 0 & 0 & 0 & 0 & 0 & 0 & 0\\
	1 & 0 & 0 & 0 & 0 & 1 & 0 & 0\\
	0 & 0 & 0 & 0 & 1 & 0 & 1 & 1\\
	0 & 0 & 0 & 0 & 0 & 1 & 0 & 0\\
	0 & 0 & 0 & 0 & 0 & 1 & 0 & 0\\
	\end{array}
}
\right]
\]
It is obvious that \(0 \in \sigma (T)\), and that \(\{(0,1,0,0,-1,0,0,1)^{t},(0,0,1,0,-1,0,0,1)^{t}\newline ,(0,0,0,1,-1,0,0,1)^{t},(0,0,0,0,0,0,1,-1)^{t}\}\) is a base of the \(0\)-eigenspace of \(T\).  Thus \(\Supp{}{T}= \{2,3,4,5,7,8\}
\) and 
\[
N \left[ \Supp{}{T} \right]= N \left[ \{2,3,4,5,7,8\} \right]=V(T)
\]


\begin{definition}[\cite{molina_jaume2016tree}]
	Let \(S\) be an S-tree. The \textbf{core} of \(S\), denoted by \(\Core{S}\), is defined to be  \(S\):
	\[
	\Core{S}:=N(\Supp{}{S})
	\]
\end{definition}

A vertex \(v\) is called a \textbf{core-vertex} of an S-tree \(S\) if \(v \in \Core{S}\). We will denote by  \(\core{S}\) the cardinality of \(\Core{S}\).

The tree of order 1 is an S-tree, the unique S-tree without core.

%
%
%
%

\begin{theorem}[\cite{molina_jaume2016tree}]\label{S-tree-Independent}
	Let \(S\) be an S-tree, then \(\alpha(S)=\supp{}{T}\) and \(\Supp{}{S}\) is the unique maximum independent set of \(T\). 
\end{theorem}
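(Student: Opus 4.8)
The plan is to reduce everything to one structural lemma—that the support $\Supp{}{S}$ is an independent set—and then read off both assertions from elementary counting, using only Gallai's identity $\alpha(S)+\tau(S)=v(S)$ (here $\tau$ denotes the minimum size of a vertex cover) together with the defining property $N[\Supp{}{S}]=V(S)$ of an S-tree and Lemma \ref{vecsupp}.

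First I would establish the independence lemma for an arbitrary tree, by induction on the number of vertices. The engine is the observation that if $\ell$ is a pendant vertex with neighbour $p$, then reading $A x=\theta$ at $\ell$ gives simply $x_p=0$; hence every null vector vanishes at $p$, so $p\notin\Supp{}{S}$. Using $x_p=0$ one checks that restriction to $S-p$ maps $\N{S}$ into $\N{S-p}$, and that conversely every null vector of the subtree $T_q$ hanging from $p$ extends to $S$, since the lone coupling equation at $p$ can be met by adjusting the values on the pendant neighbours of $p$. This yields $\Supp{}{S}\subseteq\Supp{}{T_q}\cup L$ and $p\notin\Supp{}{S}$, where $L$ is the set of pendant neighbours of $p$. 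As $\Supp{}{T_q}$ is independent by induction, while the vertices of $L$ are mutually non-adjacent and meet the rest of the tree only at $p\notin\Supp{}{S}$, no edge can lie inside $\Supp{}{S}$.

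Granting independence, the first claim is immediate: $\Supp{}{S}$ independent gives $\supp{}{S}\le\alpha(S)$, while the S-tree hypothesis forces $\Supp{}{S}\cap\Core{S}=\emptyset$, so $\Core{S}=V(S)\setminus\Supp{}{S}$ is the complement of an independent set, hence a vertex cover of size $\core{S}$. Gallai's identity gives $\alpha(S)=v(S)-\tau(S)\ge v(S)-\core{S}=\supp{}{S}$, so $\alpha(S)=\supp{}{S}$, with $\Supp{}{S}$ a maximum independent set and $\Core{S}$ a minimum vertex cover. For uniqueness I would record two facts. Taking $z\in\N{S}$ with $\Supp{}{z}=\Supp{}{S}$ by Lemma \ref{vecsupp} and reading $A z=\theta$ at a core vertex $w$, only the support-neighbours of $w$ contribute nonzero terms, so $w$ cannot have exactly one such neighbour; thus every core vertex has at least two neighbours in $\Supp{}{S}$. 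Moreover, as $\Core{S}$ is a minimum vertex cover, K\"onig's theorem yields a maximum matching $M$ saturating every core vertex and pairing each $w\in\Core{S}$ with a distinct $s_w\in\Supp{}{S}$.

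Finally, let $I$ be any maximum independent set and set $C=I\cap\Core{S}$. Each $w\in C$ excludes its partner $s_w$ from $I$, and comparing with $|I|=\alpha(S)=\supp{}{S}$ forces $\Supp{}{S}\setminus I$ to equal $R:=\{s_w:w\in C\}$, with $|R|=|C|$. Independence of $I$ then places all (at least two) support-neighbours of every $w\in C$ inside $R$, so $S\langle C\cup R\rangle$ would carry at least $2|C|$ edges on only $2|C|$ vertices, contradicting acyclicity unless $C=\emptyset$; hence $\Supp{}{S}\subseteq I$ and, by equal cardinality, $I=\Supp{}{S}$. The main obstacle is the independence lemma: it is the sole step beyond bookkeeping and the only one using acyclicity in an essential way, the analogous statement being false for general bipartite matrices; acyclicity then returns in the final edge-count, which is exactly what makes the maximum independent set unique.
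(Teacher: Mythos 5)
This theorem is quoted by the paper from \cite{molina_jaume2016tree} without an in-paper proof, so your proposal can only be judged on its own terms; on those terms it contains one genuine gap. Your independence lemma (the support of the null space of any tree is an independent set) is proved correctly, modulo the harmless sloppiness that deleting $p$ may leave several nontrivial components rather than a single $T_q$; and the observation via Lemma \ref{vecsupp} that every core vertex has at least two neighbours in $\Supp{}{S}$ is also correct. The gap is the sentence deducing $\alpha(S)=\supp{}{S}$. Independence gives $\supp{}{S}\le\alpha(S)$, and your Gallai computation $\alpha(S)=v(S)-\tau(S)\ge v(S)-\core{S}=\supp{}{S}$ is the \emph{same} lower bound again, because $\Core{S}$ being a vertex cover only yields $\tau(S)\le\core{S}$. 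Nothing you establish gives the reverse inequality $\alpha(S)\le\supp{}{S}$, equivalently $\tau(S)\ge\core{S}$: both of your inequalities point the same way, so ``so $\alpha(S)=\supp{}{S}$'' is a non sequitur. The same missing fact is then assumed outright in the uniqueness step, where you write ``as $\Core{S}$ is a minimum vertex cover, K\"onig's theorem yields a maximum matching saturating every core vertex'': that $\Core{S}$ is a \emph{minimum} cover is exactly equivalent to the equality being proved, so as written the argument is circular at this point.

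The gap closes with tools you already deploy elsewhere in the proposal. Since every $w\in\Core{S}$ has at least two neighbours in $\Supp{}{S}$, for any $C'\subseteq\Core{S}$ the bipartite subgraph between $C'$ and $R':=N(C')\cap\Supp{}{S}$ has at least $2|C'|$ edges, while acyclicity caps it at $|C'|+|R'|-1$; hence $|R'|>|C'|$ and Hall's condition holds. This yields a matching pairing each $w\in\Core{S}$ with a distinct $s_w\in\Supp{}{S}$, whence $\tau(S)\ge\nu(S)\ge\core{S}$, which combined with $\tau(S)\le\core{S}$ gives $\tau(S)=\core{S}$ and therefore $\alpha(S)=\supp{}{S}$. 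The same matching supplies the partners $s_w$ needed in your final uniqueness argument, with no appeal to K\"onig at all. In short: the degree-plus-edge-count trick you use at the very end is needed once earlier, to produce the matching that makes $\Core{S}$ a minimum vertex cover; with that insertion the proof becomes complete.
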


We denote with \(\mathcal{M}(G)\) the set of all maximum matchings of a graphs \(G\), and \(m(G):=|\mathcal{M}(G)|\).

\begin{theorem}[\cite{molina_jaume2016tree}] \label{MatchingMaximumStrees}
	If \(S\) is a S-Tree, then \(\nu(S)= \core{S} \). Furthermore \(\Core{S}\) is the unique minimum vertex cover of \(S\), and for all \(M \in \mathcal{M}(S)\), and for all \(e \in M\) we have that \(|e \cap \Core{S}|=1\). 
\end{theorem}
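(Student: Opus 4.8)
The plan is to combine the structural description of the support given in Theorem~\ref{S-tree-Independent} with two classical facts about bipartite graphs, recalling that every tree is bipartite. Write $\beta(S)$ for the cardinality of a minimum vertex cover of $S$. I will use the Gallai identity $\alpha(S)+\beta(S)=v(S)$, valid for every graph, and König's theorem $\nu(S)=\beta(S)$, valid for bipartite graphs.

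The first step is to identify the core explicitly as the complement of the support. By Theorem~\ref{S-tree-Independent} the set $\Supp{}{S}$ is independent, so no vertex of $\Supp{}{S}$ is a neighbour of another vertex of $\Supp{}{S}$; hence $\Core{S}=N(\Supp{}{S})$ is disjoint from $\Supp{}{S}$. On the other hand the defining property of an S-tree gives $N[\Supp{}{S}]=\Supp{}{S}\cup N(\Supp{}{S})=V(S)$. Combining these yields
\[
\Core{S}=V(S)\setminus\Supp{}{S}.
\]

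Next I pass to vertex covers. A set $I$ is independent exactly when $V(S)\setminus I$ is a vertex cover, so complementation is a bijection between maximum independent sets and minimum vertex covers. Since $\Supp{}{S}$ is, by Theorem~\ref{S-tree-Independent}, the \emph{unique} maximum independent set, its complement is the unique minimum vertex cover; by the previous paragraph this complement is precisely $\Core{S}$. Taking cardinalities and invoking Gallai gives $\beta(S)=\core{S}$, and König's theorem then yields $\nu(S)=\beta(S)=\core{S}$, which settles the first two assertions. For the last claim, fix $M\in\mathcal{M}(S)$. Because $\Core{S}$ is a vertex cover, every $e\in M$ has $|e\cap\Core{S}|\ge 1$. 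Summing over the $\core{S}$ edges of $M$, the quantity $\sum_{e\in M}|e\cap\Core{S}|$ counts the vertices of $\Core{S}$ saturated by $M$, so it is at most $|\Core{S}|=\core{S}$; at the same time it is at least $\core{S}$ since each term is at least $1$. Hence the sum equals $\core{S}$ and every term equals $1$, giving $|e\cap\Core{S}|=1$ for all $e\in M$.

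The only genuinely tree-specific input is the identification $\Core{S}=V(S)\setminus\Supp{}{S}$, which rests on the independence of the support and on the S-tree property; everything afterwards is a formal consequence of Gallai, König, and the uniqueness supplied by Theorem~\ref{S-tree-Independent}. I therefore expect the point requiring the most care to be this identification together with the observation that complementation transports \emph{uniqueness} of the maximum independent set to uniqueness of the minimum vertex cover, rather than merely matching their sizes.
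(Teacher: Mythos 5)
Your proof is correct, but note that this paper does not actually prove Theorem \ref{MatchingMaximumStrees}: it is quoted verbatim from \cite{molina_jaume2016tree}, so there is no internal proof to compare against. Judged on its own, your argument is a clean deduction from the one S-tree-specific fact the paper does supply, Theorem \ref{S-tree-Independent}: independence of \(\Supp{}{S}\) together with the defining property \(N[\Supp{}{S}]=V(S)\) gives \(\Core{S}=V(S)\setminus\Supp{}{S}\); complementation carries the unique maximum independent set to the unique minimum vertex cover; K\"onig's theorem (trees are bipartite) gives \(\nu(S)=\beta(S)=\core{S}\); and your double counting over a maximum matching correctly forces \(|e\cap\Core{S}|=1\) for every matching edge. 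All steps check, including the degenerate one-vertex S-tree, where the core is empty and the final condition is vacuous. The only caveat worth recording is a logical dependency: your proof leans entirely on Theorem \ref{S-tree-Independent}, which is itself imported from \cite{molina_jaume2016tree}; if in that source the uniqueness of the maximum independent set were derived \emph{from} the matching statement, your argument would be circular as a reconstruction of the original development. Within the present paper's ordering, where Theorem \ref{S-tree-Independent} precedes Theorem \ref{MatchingMaximumStrees}, the deduction is legitimate.
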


Let \(M\) be a matching in a graph \(G\), in what follows, \(V(M)\) stands for the set of vertices of \(M\). 

\begin{lemma}[\cite{molina_jaume2016tree}] \label{SuppNSaturated}
	Let \(S\) be an S-tree. If \(v \in \Supp{}{T}\), then there exists \(M \in \mathcal{M}(S)\) such that \(v \notin V(M)\). 
\end{lemma}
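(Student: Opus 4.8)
The plan is to argue by contradiction through the nullity of the adjacency matrix, since that is where the hypothesis $v\in\Supp{}{S}$ can be exploited directly. First I would assume that $v$ is saturated by \emph{every} maximum matching of $S$, and invoke the standard matching fact that a vertex is saturated by all maximum matchings exactly when deleting it lowers the matching number (deletion can drop $\nu$ by at most one); under our assumption this gives $\nu(S-v)=\nu(S)-1$. Since $S-v$ is a forest, the identity $\nulidad{F}=v(F)-2\nu(F)$ valid for any forest $F$ (the tree case being Theorem~\ref{Trknulloftrees}) then yields
\[
\nulidad{S-v}=\big(v(S)-1\big)-2\big(\nu(S)-1\big)=\big(v(S)-2\nu(S)\big)+1=\nulidad{S}+1 .
\]
So the whole argument reduces to showing that deleting a support vertex can never raise the nullity.

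The main step is therefore the purely linear-algebraic claim that if $v\in\Supp{}{S}$ then $\nulidad{S-v}\le\nulidad{S}$. Writing $A=A(S)$ and letting $B=A(S-v)$ be the principal submatrix obtained by deleting the row and column indexed by $v$, I would study two maps. The evaluation map $\phi\colon\N{A}\to\mathbb{R}$, $\phi(x)=x_v$, is onto precisely because $v$ lies in the support, so $\dim\ker\phi=\nulidad{S}-1$. The residual map $r\colon\N{B}\to\mathbb{R}$, $r(y)=\sum_{z\sim v}y_z$, controls the obstruction to extending a null vector of $B$ to one of $A$: extending $y$ by the value $0$ at $v$ produces a vector whose defect equation is satisfied at every $w\neq v$ automatically and at $v$ reads exactly $r(y)=0$, while restriction inverts this. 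Hence $\ker r$ is isomorphic to the space of null vectors of $A$ vanishing at $v$, i.e.\ to $\ker\phi$, so $\dim\ker r=\nulidad{S}-1$; since $\operatorname{im}r$ has dimension at most $1$, rank–nullity gives $\nulidad{S-v}=\dim\N{B}\le(\nulidad{S}-1)+1=\nulidad{S}$, which contradicts the displayed equality. Thus some maximum matching must miss $v$.

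The delicate point is this second step, and precisely where the hypothesis enters: one must verify that the extension-by-zero genuinely lands in $\N{A}$ (so that $\ker r$ is identified with the null vectors vanishing at $v$) and that $v\in\Supp{}{S}$ forces $\phi$ to be surjective rather than merely nonzero. Along the way I would also record, from Theorem~\ref{MatchingMaximumStrees} and Theorem~\ref{S-tree-Independent}, that $V(S)=\Supp{}{S}\sqcup\Core{S}$ and that every maximum matching saturates all of $\Core{S}$; this guarantees that an exposed vertex exists and pins it down as a support vertex, and it underlies the alternative, more combinatorial route of producing an $M$-alternating path from $v$ to an $M$-exposed vertex that begins with a matched edge and then flipping it. I expect that alternating-path approach to be the harder one to make rigorous, since locating the path is essentially a Gallai–Edmonds argument, whereas the nullity computation above is short and self-contained once the forest identity $\nulidad{F}=v(F)-2\nu(F)$ is invoked.
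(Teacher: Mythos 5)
The paper never proves this lemma at all: it is quoted as a known result from \cite{molina_jaume2016tree}, so there is no internal proof to compare yours against, and your argument must stand on its own — which it does. The contradiction is correctly engineered: assuming every $M\in\mathcal{M}(S)$ saturates $v$, the standard deletion fact gives $\nu(S-v)=\nu(S)-1$, and since $S-v$ is a forest whose adjacency matrix is block diagonal over components, the tree identity $\rank{A}=2\nu(T)$ from \cite{bevis1995ranks} (which the paper cites explicitly; this, rather than Theorem~\ref{Trknulloftrees}, is the most direct source for your forest identity $\nulidad{F}=v(F)-2\nu(F)$) extends additively to give $\nulidad{S-v}=\nulidad{S}+1$. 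Your linear-algebra step is also sound: extension by zero and restriction are mutually inverse bijections between $\{y\in\N{A(S-v)}:\sum_{z\sim v}y_z=0\}$ and $\{x\in\N{A(S)}:x_v=0\}$, and the hypothesis $v\in\Supp{}{S}$ says exactly that the evaluation functional $x\mapsto x_v$ is nonzero on $\N{A(S)}$ — hence automatically surjective onto $\mathbb{R}$, so the worry you flag about "surjective rather than merely nonzero" is vacuous — giving both spaces dimension $\nulidad{S}-1$; rank–nullity applied to $y\mapsto\sum_{z\sim v}y_z$ then yields $\nulidad{S-v}\le\nulidad{S}$, the desired contradiction. Two remarks on what your route buys. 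First, you never use the S-tree hypothesis, so you have proved the stronger statement that in any tree (indeed any forest) every support vertex of the null space is missed by some maximum matching; the S-tree assumption is merely the context in which the paper states the lemma. Second, your approach is purely linear-algebraic and avoids the alternating-path/Gallai–Edmonds machinery you mention as the alternative, which is indeed the harder route to make rigorous; the only blemish is the slightly off attribution of the nullity identity, which does not affect correctness since the needed rank formula is available in the paper.
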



%
%

A tree \(N\) is non-singular tree,  \textbf{N-tree} for short, if its adjacency matrix \(A(N)\) is invertible. Thus \(N\) is a N-tree if and only if \(N\) has a perfect matching, see \cite{bapat2010graphs}. By 	K\"{o}nig-Egerv\'{a}ry Theorem, if \(N\) is a N-tree, \(\alpha(N)=\frac{v(N)}{2}\).

\begin{definition}[\cite{molina_jaume2016tree}]
	Let \(T\) be a tree. The \textbf{S-Set} of T, denoted by \(\mathcal{F}_{S}(T)\), is defined to be the set of connected components of the forest induced by the closed neighbor of \(\Supp{}{T}\) in \(T\):
	\[
	\mathcal{F}_{S}(T):=\{S \; : \; S \text{ is a connected component of } T\left\langle N\left[ \Supp{}{T} \right] \right\rangle \}
	\]
	The \textbf{N-set} of T, denoted by \(\mathcal{F}_{N}(T)\), is defined to be the set of connected components of the forest that memains after taking away the S-set:
	\[
	\mathcal{F}_{N}(T):= \{N \; :\; N \text{ is a connected component of } T \backslash \mathcal{F}_{S}(T)\}
	\]
	The pair of sets \((\mathcal{F}_{S}(T),\mathcal{F}_{N}(T))\)  is called the \textbf{null decomposition} of \(T\).
\end{definition}

We think the sets \(\mathcal{F}_{S}(T)\) and \(\mathcal{F}_{N}(T)\) as forests. Thus \(E(\mathcal{F}_{S}(T))\) is the set of edges of all trees in \(\mathcal{F}_{S}(T)\). With \(V(\mathcal{F}_{N}(T))\) we denote the set of vertices of all trees in \(\mathcal{F}_{N}(T)\). Even some times we talk about the S-forest \(\mathcal{F}_{S}(T)\) instead of S-set, and the about the N-forest \(\mathcal{F}_{N}(T)\) instead of N-set.



The tree of order 1 is never part of the null decomposition of any other tree.

By construction, for any tree \(T\) there is a set of edges in \(E(T)\) that are neither edges of \(\mathcal{F}_{S}(T)\) nor of \(\mathcal{F}_{N}(T)\). They are called the \textbf{connection edges} of \(T\), and the set of all of them is denoted \(\ConnE{T}\):
\[
\ConnE{T}:=E(T) \setminus \left(E(\mathcal{F}_{S}(T)) \cup E(\mathcal{F}_{N}(T))  \right) 
\]
Note that for all \(e \in \ConnE{T}\), we have that \(e \cap V(\mathcal{F}_{S}(T)) \neq \emptyset \) and \(e \cap V(\mathcal{F}_{N}(T)) \neq \emptyset \). Given \(S \in \mathcal{F}_{S}(T) \) and \(N \in \mathcal{F}_{N}(T) \) we say that \(S\) and \(N\) are \textbf{adjacent parts}, denoted by \(S \sim N\), if there exists \(e \in \ConnE{T}\) such that  \(e \cap V(S) \neq \emptyset \) and  \(e \cap V(N) \neq \emptyset \).

For example, consider the tree in Figure ~\ref{fig_el}, its S-forest is
\[
\mathcal{F}_{S}(T)=\{T \langle \{1,2,3\}\rangle, T \langle \{4,5,6,7,8\}\rangle, T \langle \{9,10,11,12\}\rangle\}=\{S_{1}, S_{2},S_{3}\}
\]
and its N-forest is
\[
\mathcal{F}_{N}(T)=\{T \langle \{13,14\}\rangle, T \langle \{15,16,17,18 \}\rangle\}=\{N_{1}, N_{2}\}
\]


\begin{figure}[h]
	\centering
	\begin{tikzpicture}[thick,scale=0.2]%
	
	\draw 
	(0,0) node[fill={rgb:black,1;white,3}]{2}
	(5,0) node[fill={rgb:black,1;white,6}]{1} -- (0,0)
	(10,0) node{13} -- (5,0)
	(15,0) node{14} -- (10,0)
	(20,0) node[fill={rgb:black,1;white,6}]{4} -- (15,0)
	(5,5) node[fill={rgb:black,1;white,3}]{3} -- (5,0)
	(15,-5) node{16} -- (10,-5)
	(20,-5) node{15} -- (15,-5)
	(25,0) node[fill={rgb:black,1;white,3}]{7} -- (20,0)
	(25,-5) node[fill={rgb:black,1;white,6}]{5} -- (25,0)
	(10,-5) node[fill={rgb:black,1;white,6}]{9} -- (10,0)
	(5,-5) node[fill={rgb:black,1;white,3}]{10} -- (10,-5)
	(10,-10) node[fill={rgb:black,1;white,3}]{12} -- (10,-5)
	(15,-15) node{18} -- (15,-10)
	(15,-10) node{17} -- (15,-5)
	(25,-10) node[fill={rgb:black,1;white,3}]{8} -- (25,-5)
	(20,5) node[fill={rgb:black,1;white,3}]{6} -- (20,0)
	(10-3.5355,-8.5355) node[fill={rgb:black,1;white,3}]{11} -- (10,-5);

	\draw [densely dashdotted,very thick] (0,-2) -- (4.8,-2);
	\draw [densely dashdotted,very thick] (5,-2) arc (-90:0:2);
	\draw [densely dashdotted,very thick] (0,2) arc (90:270:2);
	\draw [densely dashdotted,very thick] (7,5) -- (7,0);
	\draw [densely dashdotted,very thick] (3,4.9) -- (3,4.3);
	\draw [densely dashdotted,very thick] (0.3,2) -- (1,2);
	\draw [densely dashdotted,very thick] (1,2) arc (-90:0:2);
	\draw [densely dashdotted,very thick] (3,5) arc (180:0:2);
	
	\draw [densely dashdotted,very thick] (27,0) -- (27,-10);
	\draw [densely dashdotted,very thick] (27,-10) arc (0:-180:2);
	\draw [densely dashdotted,very thick] (27,0) arc (0:90:2);
	\draw [densely dashdotted,very thick] (23,-10) -- (23,-4);
	\draw [densely dashdotted, very thick] (27,0) -- (27,-10);
	\draw [densely dashdotted, very thick] (22,5) arc (0:180:2);
	\draw [densely dashdotted, very thick] (20,-2) arc (-90:-180:2);
	\draw [densely dashdotted, very thick] (23,-4) arc (0:90:2);
	\draw [densely dashdotted, very thick] (24,2) arc (-90:-180:2);
	\draw [densely dashdotted, very thick] (18,0) -- (18,5);
	\draw [densely dashdotted, very thick] (20,-2) -- (21,-2);
	\draw [densely dashdotted, very thick] (25,2) -- (24,2);
	\draw [densely dashdotted, very thick] (22,5) -- (22,4);
	
	\draw [densely dashdotted, very thick] (10,-3) -- (5,-3);
	\draw [densely dashdotted, very thick] (12,-5) -- (12,-10);
	\draw [densely dashdotted, very thick] (12,-5) arc (0:90:2);
	\draw [densely dashdotted, very thick] (5,-3) arc (90:270:2);
	\draw [densely dashdotted, very thick] (12,-10) arc (0:-180:2);
	\draw [densely dashdotted, very thick] (5,-7) arc (135:315:2);
	
	\draw (0,5) node[white]{\textcolor{black}{$S_1$}};
	\draw (25,5) node[white]{\textcolor{black}{$S_2$}};
	\draw (2.5,-10) node[white]{\textcolor{black}{$S_3$}};
	
	\draw [dotted] (10,-2) -- (15,-2);
	\draw [dotted] (10,2) -- (15,2);
	\draw [dotted] (15,2) arc (90:-90:2);
	\draw [dotted] (10,-2) arc (-90:-270:2);
	
	\draw [dotted] (15,-3) -- (20,-3);
	\draw [dotted] (17,-9) -- (17,-15);
	\draw [dotted] (13,-15) -- (13,-5);
	\draw [dotted] (15,-3) arc (90:180:2);
	\draw [dotted] (20,-3) arc (90:-90:2);
	\draw [dotted] (17,-15) arc (0:-180:2);
	\draw [dotted] (19,-7) arc (90:180:2);
	\draw [dotted] (20,-7) -- (19,-7);

	\draw (12.5,4) node[white]{\textcolor{black}{$N_1$}};
	\draw (19,-12.5) node[white]{\textcolor{black}{$N_2$}};

	\end{tikzpicture}
	\caption{Null decomposition of a tree \(T\)}
	\label{fig_el}
\end{figure}
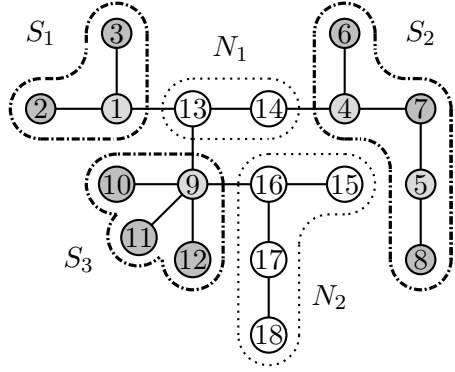

Let \(x\) be a vector whose components are indexed by the vertices of a tree \(T\), and \(S \subset  V(T)\), then \(\down{x}{T}{S}\) is the vector that we obtain when \(x\) is restricted to the coordinates associated to \(S\). For example, let 
\[
x^{t}=(1,2,\dots,18)
\]
be a vector whose coordinates are indexed by the vertices of the tree \(T\) in Figure ~\ref*{fig_el}, then  
\[
(\down{x}{T}{S{\small 2}})^{t} = (4,5,6,7,8)
\]

Let \(U,W,\) and \(G\) be graphs such that \(U \leq W \leq G\), and let \(x \in \mathbb{R}^{G}\) then
\(
\down{\left(\down{x}{G}{W}\right)}{W}{U}= \down{x}{G}{U }
\).

For any \(n\)-graph \(G\) and any \(m\)-induced-sugbraph \(H \leq G\), given \(x \in \mathbb{R}^{H}\) whose coordinates are indexed by \(V(H)\), by \(\up{x}{G}{H}\) we denoted the lift of \(x\) to a vector of \(\mathbb{R}^{G}\) and it is carried out in the following way:
\begin{itemize}
	\item for any \(u \in V(T) \setminus V(H)\), \((\up{x}{G}{H})_{u}=0\).
	\item for any \(u \in V(H)\), \((\up{x}{G}{H})_{u}=x_{u}\).
\end{itemize}
For example, let 
\[
x^{t}=(1,2,3,4)
\]
be a vector of \(\mathbb{R}^{S_{3}}\). Consider the tree \(T\) and the subtree \(S_{3}\), see Figure ~\ref{fig_el}, then  
\[
(\up{x}{T}{S{\small 3}})^{t} = (0,0,0,0,0,0,0,0,0,1,2,3,4,0,0,0,0,0,0)
\]

\begin{theorem}[\cite{molina_jaume2016tree}]
	Let \(T\) be a tree. The S-set \(\mathcal{F}_{S}(T)\) of \(T\) is a set of S-trees.
\end{theorem}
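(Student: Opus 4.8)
The plan is to show that every connected component $S$ of $T\langle N[\Supp{}{T}]\rangle$ satisfies $N[\Supp{}{S}]=V(S)$, where now the neighbourhoods and the support are taken \emph{inside} $S$ itself rather than in $T$. The starting observation is that a null vector of $T$ is forced to vanish off its support: if $x\in\N{T}$ and $w\notin\Supp{}{T}$, then $x_{w}=0$, because $\Supp{}{T}$ is by definition the union of the supports of all vectors in $\N{T}$. In particular $x$ vanishes on every vertex of every N-tree, since $V(\mathcal{F}_{N}(T))=V(T)\setminus N[\Supp{}{T}]\subseteq V(T)\setminus\Supp{}{T}$.

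First I would show that restriction to $V(S)$ carries $\N{T}$ into $\N{S}$. Fix $x\in\N{T}$ and set $y=\down{x}{T}{S}$. Since $S$ is an induced subgraph of $T$ we have $N_{S}(u)=N_{T}(u)\cap V(S)$ for every $u\in V(S)$, whence $\sum_{w\in N_{S}(u)}y_{w}=\sum_{w\in N_{T}(u)}x_{w}-\sum_{w\in N_{T}(u)\setminus V(S)}x_{w}$. The first sum is $0$ because $x$ is a null vector of $T$. For the second, a neighbour $w$ of $u$ lying outside $V(S)$ cannot belong to another S-tree, for then $\{u,w\}$ would be an edge of $T\langle N[\Supp{}{T}]\rangle$ joining two distinct components; hence $\{u,w\}\in\ConnE{T}$ and $w\in V(\mathcal{F}_{N}(T))$, so $x_{w}=0$ and the second sum also vanishes. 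Thus $y\in\N{S}$. Applying this to a null vector of $T$ that is nonzero at a prescribed $v\in\Supp{}{T}\cap V(S)$ yields $v\in\Supp{}{S}$, so that $\Supp{}{T}\cap V(S)\subseteq\Supp{}{S}$.

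It then remains a short case analysis. Let $v\in V(S)$; by construction $V(S)\subseteq N[\Supp{}{T}]$, so either $v\in\Supp{}{T}$ or $v$ has a $T$-neighbour $w\in\Supp{}{T}$. In the first case $v\in\Supp{}{T}\cap V(S)\subseteq\Supp{}{S}$. In the second case $w\in N[\Supp{}{T}]$ lies in the same component as $v$, hence $w\in V(S)$, giving $w\in\Supp{}{T}\cap V(S)\subseteq\Supp{}{S}$ together with $w\in N_{S}(v)$. Either way $v\in N_{S}[\Supp{}{S}]$, so $V(S)\subseteq N_{S}[\Supp{}{S}]\subseteq V(S)$ and $S$ is an S-tree. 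I expect the only genuine obstacle to be the middle step: making precise that a vertex of $S$ can reach the rest of $T$ only through connection edges into N-trees, and that the null vector is therefore pinned to $0$ there. The support inclusion and the final case split are then routine bookkeeping on the definitions of $\mathcal{F}_{S}(T)$, $\mathcal{F}_{N}(T)$ and $\ConnE{T}$.
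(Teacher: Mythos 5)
Your proposal is correct. Note that the paper does not actually prove this statement: it is imported verbatim from \cite{molina_jaume2016tree}, so there is no in-paper argument to compare against. Judged on its own, your proof is sound and self-contained. The one point that needed care is exactly the one you isolated: if $u\in V(S)$ and $w$ is a $T$-neighbour of $u$ with $w\notin V(S)$, then $w\notin N[\Supp{}{T}]$ (otherwise the edge $\{u,w\}$ would lie in the induced subgraph $T\langle N[\Supp{}{T}]\rangle$ and force $w$ into the same component $S$), hence $w\notin\Supp{}{T}$ and every null vector of $T$ vanishes at $w$; this is what makes the restriction $\down{x}{T}{S}$ a null vector of $S$. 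From there, the inclusion $\Supp{}{T}\cap V(S)\subseteq\Supp{}{S}$ and the closing case split are, as you say, bookkeeping. Two remarks on efficiency of the route: first, you only ever need this one inclusion of supports, not the equality $\Supp{}{S}=\Supp{}{T}\cap V(S)$, which would require lifting null vectors of $S$ back to $T$ (that is essentially the content of Proposition \ref{C_null}); avoiding it keeps the argument short. Second, your technique is the same restriction pattern the present paper does use in its proof of Theorem \ref{ThS-atoms1} (that the A-set of an S-tree consists of S-trees), except that there the authors invoke Lemma \ref{vecsupp} to work with a single null vector of full support, whereas you work with a vector prescribed to be nonzero at one vertex at a time; both devices are legitimate and interchangeable here.
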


Let \(T\) be a tree, the \textbf{core} of \(T\), denoted by \(\Core{T}\), is the set
\[
\Core{T}:=\bigcup_{S \in \mathcal{F}_{S}(T)} \Core{S}=N(\Supp{}{T})
\]
As before, \(\core{T}=|\Core{T}|\).


\begin{proposition}[\cite{molina_jaume2016tree}] \label{C_null}
	Let \(T\) be a tree, then 
	\[
	\N{T}=\bigoplus_{S \in \mathcal{F}_{S}(T)} \up{\N{S}}{$T$}{$S$}
	\]
	Hence \(\nulidad{T}=\sum_{S \in \mathcal{F}_{S}(T)} \nulidad{S}\).
\end{proposition}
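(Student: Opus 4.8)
The plan is to prove the two inclusions of the claimed equality separately and then read off the nullity formula from the directness of the sum. Two structural facts will be used repeatedly. First, the vertex set of the $S$-forest is exactly $N[\Supp{}{T}]$, so the vertices lying in the $N$-forest are precisely those of $V(T)\setminus N[\Supp{}{T}]$; consequently every $z\in\N{T}$, being supported inside $\Supp{}{T}$, vanishes on all $N$-vertices. Second, any edge of $T$ with both endpoints in $N[\Supp{}{T}]$ lies inside a single member of $\mathcal{F}_{S}(T)$, so the only edges leaving a given $S\in\mathcal{F}_{S}(T)$ are connection edges of $\ConnE{T}$, each joining $S$ to an $N$-vertex.

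First I would prove $\N{T}\subseteq\bigoplus_{S}\up{\N{S}}{T}{S}$. Given $z\in\N{T}$, put $x_{S}:=\down{z}{T}{S}$. Fixing $v\in V(S)$ and expanding $(A(T)z)_{v}=0$, the neighbours of $v$ outside $S$ are $N$-vertices by the second fact, and $z$ vanishes there; hence the equation collapses to $(A(S)x_{S})_{v}=0$, so $x_{S}\in\N{S}$. Since $z$ vanishes on the $N$-vertices and $V(T)$ is partitioned by the sets $V(S)$ together with the $N$-vertices, we get $z=\sum_{S}\up{x_{S}}{T}{S}$, which gives the inclusion. As a by-product this yields $\Supp{}{T}\cap V(S)\subseteq\Supp{}{S}$, a fact I will invoke below.

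The reverse inclusion is the delicate one: I must show that for $x\in\N{S}$ the lift $y:=\up{x}{T}{S}$ lies in $\N{T}$. Verifying $(A(T)y)_{v}=0$ is routine when $v\in V(S)$ (the equation reduces to $(A(S)x)_{v}$) and when $v$ lies in another member of $\mathcal{F}_{S}(T)$ (every neighbour carries value $0$). The only problematic case, and the main obstacle, is an $N$-vertex $v$ having a neighbour $a\in V(S)$ across a connection edge, where $y_{a}=x_{a}$ might be nonzero. I would dispose of it by showing $a\in\Core{S}$. Indeed, a support vertex of $T$ has all of its $T$-neighbours inside its own $S$-tree (they lie in $\Core{T}\subseteq N[\Supp{}{T}]$, hence in the same component), so no connection edge meets $\Supp{}{T}$; thus $a\notin\Supp{}{T}$, i.e. $a\in\Core{T}=N(\Supp{}{T})$, so $a$ is adjacent to some $s\in\Supp{}{T}\cap V(S)$. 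By the by-product, $s\in\Supp{}{S}$, and since $\Supp{}{S}$ is independent (Theorem \ref{S-tree-Independent}) its neighbour $a$ cannot belong to $\Supp{}{S}$. Hence $a\in\Core{S}$, so $x_{a}=0$ and every such contribution vanishes, giving $y\in\N{T}$.

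Finally, distinct members of $\mathcal{F}_{S}(T)$ have disjoint vertex sets, so the spaces $\up{\N{S}}{T}{S}$ are supported on disjoint coordinate blocks; this makes the sum direct and establishes the displayed decomposition. Taking dimensions, and using that each lift is injective, gives $\nulidad{T}=\sum_{S}\nulidad{S}$. I expect essentially all of the genuine work to sit in the core-vertex argument of the third paragraph; the remaining steps are bookkeeping with the adjacency equations and with the partition of $V(T)$ furnished by the null decomposition.
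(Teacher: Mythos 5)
Your proof is correct, but there is nothing in this paper to compare it against: Proposition \ref{C_null} is one of the results imported from \cite{molina_jaume2016tree}, stated here with a citation and no proof. Judged on its own merits, your argument is sound and self-contained. Both of your structural facts hold --- $V(\mathcal{F}_{S}(T))=N[\Supp{}{T}]$ by definition, and since each $S\in\mathcal{F}_{S}(T)$ is a connected component of the induced subgraph $T\langle N[\Supp{}{T}]\rangle$, every edge leaving $S$ must end at an $N$-vertex --- and they make the restriction step ($\down{z}{T}{S}\in\N{S}$ for $z\in\N{T}$), the decomposition $z=\sum_{S}\up{x_{S}}{T}{S}$, and the by-product $\Supp{}{T}\cap V(S)\subseteq\Supp{}{S}$ all go through. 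You also correctly isolated the one genuinely delicate point: for the lift of $x\in\N{S}$ to be annihilated by $A(T)$ at an $N$-vertex, the endpoint $a\in V(S)$ of each connection edge must satisfy $x_{a}=0$. Your resolution is non-circular and uses exactly the right ingredients: connection edges avoid $\Supp{}{T}$ (all $T$-neighbours of a supported vertex lie in $\Core{T}$, hence inside the same component), so $a\in\Core{T}$; then $a$ has a neighbour $s\in\Supp{}{T}\cap V(S)\subseteq\Supp{}{S}$, and the independence of $\Supp{}{S}$ (Theorem \ref{S-tree-Independent}) forces $a\notin\Supp{}{S}$, whence $x_{a}=0$. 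The directness of the sum, coming from disjointness of the vertex sets, and the dimension count via injectivity of the lifts are routine and correctly handled.
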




From \cite{bevis1995ranks} we know that the rank of the adjacency matrix of a tree equals two times the matching number of \(T\): \(\rank{A}=2\nu(T)\).

%

\begin{theorem}[\cite{molina_jaume2016tree}]
	Let \(T\) be a tree, the N-set \(\mathcal{F}_{N}(T)\) of \(T\) is a set of N-trees.
\end{theorem}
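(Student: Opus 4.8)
The plan is to determine $\nu(T)$ exactly by sandwiching it between a matching lower bound and a vertex-cover upper bound, and then to read off that each $N$-part must carry a perfect matching. Throughout, sums range over $S\in\mathcal{F}_{S}(T)$ and $N\in\mathcal{F}_{N}(T)$. I would begin with a rank computation. Recalling $\rank{T}=2\nu(T)$, we have $2\nu(T)=v(T)-\nulidad{T}$. The vertices split as $V(T)=V(\mathcal{F}_{S}(T))\cup V(\mathcal{F}_{N}(T))$ (a disjoint union), so $v(T)=\sum_{S}v(S)+\sum_{N}v(N)$. By Proposition \ref{C_null}, $\nulidad{T}=\sum_{S}\nulidad{S}=\sum_{S}\bigl(v(S)-2\core{S}\bigr)$, where the last equality uses $\nu(S)=\core{S}$ (Theorem \ref{MatchingMaximumStrees}) and hence $\rank{S}=2\core{S}$. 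Substituting yields the target identity
\[
\nu(T)=\sum_{S}\core{S}+\tfrac12\sum_{N}v(N).
\]

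Next come the two bounds. For the lower bound, choosing a maximum matching inside each $S$-part (of size $\core{S}$) and inside each $N$-part (of size $\nu(N)$) produces a matching of $T$ that uses no connection edge, so $\nu(T)\ge \sum_{S}\core{S}+\sum_{N}\nu(N)$. For the upper bound I claim $\Core{T}$ together with a minimum vertex cover of each $N$-part is a vertex cover of $T$. The decisive observation is that the $S$-endpoint $u$ of any connection edge $\{u,w\}$ lies in $\Core{T}$: since $w\in V(\mathcal{F}_{N}(T))=V(T)\setminus N[\Supp{}{T}]$, the vertex $w$ has no neighbour in $\Supp{}{T}$, so its neighbour $u\notin\Supp{}{T}$; but $u\in N[\Supp{}{T}]$, whence $u\in N(\Supp{}{T})=\Core{T}$. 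Because $\Core{S}$ is a vertex cover of each $S$-tree (Theorem \ref{MatchingMaximumStrees}) and $\Core{T}=\bigcup_{S}\Core{S}$, the set $\Core{T}$ covers all $S$-edges and, by the observation, all connection edges; adjoining minimum vertex covers of the $N$-parts covers the remaining $N$-edges. Since trees are bipartite, the K\"{o}nig--Egerv\'{a}ry theorem then gives $\nu(T)\le\core{T}+\sum_{N}\nu(N)=\sum_{S}\core{S}+\sum_{N}\nu(N)$.

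Combining the two bounds forces $\nu(T)=\sum_{S}\core{S}+\sum_{N}\nu(N)$, and comparing this with the rank identity gives $\sum_{N}\bigl(\tfrac12 v(N)-\nu(N)\bigr)=0$. Each summand is nonnegative, since any matching of $N$ saturates at most $v(N)$ vertices and hence $\nu(N)\le v(N)/2$; therefore every summand vanishes, i.e. $\nu(N)=v(N)/2$ for each $N$. Thus each $N\in\mathcal{F}_{N}(T)$ has a perfect matching and is consequently nonsingular, that is, an $N$-tree. The step I expect to require the most care is the vertex-cover upper bound — precisely the claim that $\Core{T}$ meets every connection edge on its $S$-side — since it is this structural fact, rather than the routine rank bookkeeping, that makes the two bounds coincide and pins $\nu(T)$ to its value.
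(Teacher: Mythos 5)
Your proof is correct; the main caveat is that this paper never proves the statement at all --- it is quoted from \cite{molina_jaume2016tree} --- so there is no in-paper argument to compare against, and your derivation stands as an independent one. Judged on its own it is sound and, importantly, non-circular within the paper's ordering of results: you invoke only facts stated \emph{before} this theorem (the identity $\rank{T}=2\nu(T)$ from \cite{bevis1995ranks}, Proposition \ref{C_null}, Theorem \ref{MatchingMaximumStrees}, the theorem that the S-parts are themselves S-trees, K\"{o}nig--Egerv\'{a}ry, and the equivalence for trees between nonsingularity and having a perfect matching from \cite{bapat2010graphs}), and you re-derive the identity $\nu(T)=\core{T}+\tfrac12 v(\mathcal{F}_{N}(T))$ by rank--nullity bookkeeping instead of citing Corollary \ref{nualpha_coresupp} or Corollary \ref{CoroMatching}, which appear after the theorem and (in the source paper) depend on it. Both pillars check out: the structural claim that the S-endpoint $u$ of a connection edge $\{u,w\}$ lies in $\Core{T}$ follows exactly as you say, since $w\notin N[\Supp{}{T}]$ forbids $u\in\Supp{}{T}$ while $u\in V(\mathcal{F}_{S}(T))=N[\Supp{}{T}]$ forces $u\in N(\Supp{}{T})=\Core{T}$; and $\Core{S}$ being a vertex cover of each S-part makes $\Core{T}\cup\bigcup_{N}C_{N}$ a cover of all of $T$. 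One small economy you could note: your matching lower bound is redundant, since the cover upper bound alone, compared with the rank identity, already gives $\tfrac12\sum_{N}v(N)=\nu(T)-\core{T}\le\sum_{N}\nu(N)$, which together with $\nu(N)\le v(N)/2$ for each part forces $\nu(N)=v(N)/2$ throughout.
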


\begin{corollary}[\cite{molina_jaume2016tree}] \label{CoroMatching}
	Let  \(T\) be a tree, and a maximum matching \(M \in \mathcal{M}(T)\). Then \(M \cap \ConnE{T} = \emptyset \), and for all \(e \in M\) \(|e \cap \Core{T}|\leq 1\).
\end{corollary}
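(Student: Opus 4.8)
The plan is to reduce the whole statement to the single numerical identity
\[
\nu(T)=\sum_{S\in\mathcal{F}_{S}(T)}\nu(S)+\sum_{N\in\mathcal{F}_{N}(T)}\nu(N)
\]
and then to squeeze an arbitrary maximum matching against it by a double count. First I would establish this identity. For an S-tree $S$ the vertex set splits as $V(S)=\Supp{}{S}\sqcup\Core{S}$: the two pieces cover $V(S)$ by the defining property $N[\Supp{}{S}]=V(S)$, and are disjoint because $\Supp{}{S}$ is independent (Theorem \ref{S-tree-Independent}). Combining $\nu(S)=\core{S}$ (Theorem \ref{MatchingMaximumStrees}) with the rank formula $\rank{S}=2\nu(S)$ \cite{bevis1995ranks} yields $\nulidad{S}=v(S)-2\core{S}$, while every $N\in\mathcal{F}_{N}(T)$ has a perfect matching, so $\nu(N)=v(N)/2$. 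Substituting $\nulidad{T}=\sum_{S}\nulidad{S}$ (Proposition \ref{C_null}) and $v(T)=\sum_{S}v(S)+\sum_{N}v(N)$ into $\nu(T)=\tfrac12(v(T)-\nulidad{T})$ makes the $v(S)$ terms cancel and leaves exactly the identity above.

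The structural ingredient I would isolate next is that \emph{every connection edge meets its S-tree at a core vertex}. If a connection edge had an endpoint $s\in\Supp{}{S}=\Supp{}{T}\cap V(S)$ (the equality coming from the direct sum of Proposition \ref{C_null}), then all neighbours of $s$ would lie in $N(\Supp{}{T})=\Core{T}\subseteq V(\mathcal{F}_{S}(T))$, so $s$ could not be joined to the N-forest, a contradiction. Hence the connection edges incident with a fixed $S$ saturate $a_{S}$ \emph{distinct} core vertices of $S$.

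Then I would fix $M\in\mathcal{M}(T)$ and split it as $M=\bigsqcup_{S}M_{S}\sqcup\bigsqcup_{N}M_{N}\sqcup M_{C}$, with $M_{S}=M\cap E(S)$, $M_{N}=M\cap E(N)$ and $M_{C}=M\cap\ConnE{T}$. Since $\Core{S}$ is a vertex cover of $S$ (Theorem \ref{MatchingMaximumStrees}) and $M_{S}$ avoids the $a_{S}$ core vertices taken by connection edges, sending each edge of $M_{S}$ to its core endpoint injects $M_{S}$ into $\Core{S}$ with those $a_{S}$ vertices deleted, so $|M_{S}|\le\core{S}-a_{S}$. On the N-side, if $b_{N}$ vertices of $N$ are blocked then trivially $|M_{N}|\le\tfrac12(v(N)-b_{N})=\nu(N)-b_{N}/2$. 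As $\sum_{S}a_{S}=\sum_{N}b_{N}=|M_{C}|$, summing these bounds and invoking the identity gives $\nu(T)=|M|\le\nu(T)-|M_{C}|/2$, which forces $M_{C}=\emptyset$. This is the first assertion.

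For the second, note that with $M_{C}=\emptyset$ the chain $|M|\le\sum_{S}\nu(S)+\sum_{N}\nu(N)$ is an equality, so $|M_{S}|=\nu(S)$ for every $S$; that is, each $M_{S}$ is a \emph{maximum} matching of the S-tree $S$. Theorem \ref{MatchingMaximumStrees} then gives $|e\cap\Core{S}|=1$ for every $e\in M_{S}$, and since $\Core{T}\cap V(S)=\Core{S}$ this is $|e\cap\Core{T}|=1$; an edge lying inside some N-tree misses $\Core{T}\subseteq V(\mathcal{F}_{S}(T))$ entirely, giving $|e\cap\Core{T}|=0$. Thus $|e\cap\Core{T}|\le1$ for all $e\in M$. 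I expect the one genuinely delicate point to be the double count: the naive bounds $|M_{S}|\le\nu(S)$ and $|M_{N}|\le\nu(N)$ recover only $|M|\le\nu(T)$ with no slack, and the contradiction surfaces only after one charges a connection edge a full unit on the S-side (through the core-only attachment) and simultaneously half a unit on the N-side.
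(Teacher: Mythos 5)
Your proof is correct, but there is no in-paper proof to measure it against: Corollary \ref{CoroMatching} is imported from \cite{molina_jaume2016tree} without its proof, so your argument stands as a self-contained derivation from the surrounding material, and it is a valid one. Two points of comparison with the paper's toolkit are worth recording. First, the identity your double count rests on,
\[
\nu(T)=\sum_{S\in\mathcal{F}_{S}(T)}\nu(S)+\sum_{N\in\mathcal{F}_{N}(T)}\nu(N),
\]
is exactly Corollary \ref{nualpha_coresupp} (via \(\core{T}=\sum_{S}\core{S}\) and \(\nu(S)=\core{S}\)); but that corollary appears \emph{after} Corollary \ref{CoroMatching} and may depend on it in the source paper, so your choice to re-derive the identity from Proposition \ref{C_null}, the rank formula \(\rank{T}=2\nu(T)\) of \cite{bevis1995ranks}, Theorem \ref{MatchingMaximumStrees}, and the perfect matchings of the N-trees is precisely what keeps the argument non-circular — this is the right move and deserves to be stated explicitly rather than left implicit. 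Second, your structural lemma (a connection edge meets its S-tree only in a core vertex) can be had even more cheaply: by definition \(V(\mathcal{F}_{S}(T))=N[\Supp{}{T}]\), so every neighbour of a supported vertex already lies in the S-forest, and hence a supported vertex can never be an endpoint of an edge reaching the N-forest; your version through \(\Core{T}=N(\Supp{}{T})\) is equivalent but leans on an equality the paper only asserts. The counting itself is sound: the injection of \(M_{S}\) into \(\Core{S}\) minus the \(a_{S}\) core vertices occupied by connection edges gives \(|M_{S}|\le\nu(S)-a_{S}\), the bound \(|M_{N}|\le\nu(N)-b_{N}/2\) works on the N-side, and summing yields \(|M|\le\nu(T)-|M_{C}|/2\), forcing \(M_{C}=\emptyset\); tightness then makes each \(M_{S}\) a maximum matching of \(S\), and Theorem \ref{MatchingMaximumStrees} together with \(\Core{T}\cap V(S)=\Core{S}\) finishes the second claim. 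You also correctly diagnosed the delicate point: charging each connection edge a full unit on the S-side alone, or half a unit on the N-side alone, produces no contradiction — both charges are needed simultaneously.
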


\begin{corollary}[\cite{molina_jaume2016tree}] \label{nualpha_coresupp}
	Let \(T\) be a tree, then
	\begin{align*}
	\nu(T) = & \core{T}	+\frac{v(\mathcal{F}_{N}(T))}{2}\\
	\alpha(T)= & \supp{}{T}+\frac{v(\mathcal{F}_{N}(T))}{2}
	\end{align*}
\end{corollary}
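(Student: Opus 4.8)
The plan is to obtain both identities from the rank–nullity theorem combined with the localization of the null space to the S-forest, and then to get the independence number from König–Egerv\'{a}ry. For the matching number I would start from the two ways of writing the rank of $A(T)$: on one hand $\rank{A(T)}=2\nu(T)$ (the formula of \cite{bevis1995ranks} quoted above), and on the other hand $\rank{A(T)}=v(T)-\nulidad{T}$. Since $\mathcal{F}_{S}(T)$ and $\mathcal{F}_{N}(T)$ partition $V(T)$ by construction of the null decomposition, we have $v(T)=v(\mathcal{F}_{S}(T))+v(\mathcal{F}_{N}(T))$, and by Proposition \ref{C_null} the nullity splits as $\nulidad{T}=\sum_{S\in\mathcal{F}_{S}(T)}\nulidad{S}$.

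Next I would evaluate $\nulidad{S}$ for each S-tree $S$: applying $\rank{A(S)}=2\nu(S)$ together with Theorem \ref{MatchingMaximumStrees} (which gives $\nu(S)=\core{S}$) yields $\nulidad{S}=v(S)-2\core{S}$. Summing over $\mathcal{F}_{S}(T)$ and using $\core{T}=\sum_{S}\core{S}$ gives $\nulidad{T}=v(\mathcal{F}_{S}(T))-2\core{T}$. Substituting this into $2\nu(T)=v(T)-\nulidad{T}$, the term $v(\mathcal{F}_{S}(T))$ cancels and I am left with $2\nu(T)=v(\mathcal{F}_{N}(T))+2\core{T}$, which is the first identity. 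The halving is legitimate because each N-tree has a perfect matching, so $v(\mathcal{F}_{N}(T))$ is even.

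For the independence number the shortest route is König–Egerv\'{a}ry: a tree is bipartite, so its minimum vertex cover number equals $\nu(T)$, and Gallai's identity $\alpha(T)+\tau(T)=v(T)$ then gives $\alpha(T)=v(T)-\nu(T)$. Plugging in the first identity produces $\alpha(T)=v(\mathcal{F}_{S}(T))-\core{T}+\tfrac{1}{2}v(\mathcal{F}_{N}(T))$, so it only remains to verify $v(\mathcal{F}_{S}(T))-\core{T}=\supp{}{T}$. For a single S-tree $S$, the set $\Supp{}{S}$ is independent (Theorem \ref{S-tree-Independent}) and $\Core{S}=N(\Supp{}{S})$ is disjoint from it, while $N[\Supp{}{S}]=V(S)$ forces $V(S)=\Supp{}{S}\sqcup\Core{S}$, hence $v(S)=\supp{}{S}+\core{S}$. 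Summing over the S-forest, and using that the direct-sum decomposition $\N{T}=\bigoplus_{S}\ldots$ of Proposition \ref{C_null} makes the total support the disjoint union of the individual $\Supp{}{S}$ (so that $\supp{}{T}=\sum_{S}\supp{}{S}$), gives exactly $v(\mathcal{F}_{S}(T))-\core{T}=\supp{}{T}$, completing the second identity.

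The computation is essentially bookkeeping once the right ingredients are assembled; the only two points needing genuine care are that the vertex sets of $\mathcal{F}_{S}(T)$ and $\mathcal{F}_{N}(T)$ really do partition $V(T)$, which rests on the construction of the null decomposition, and the identity $\supp{}{T}=\sum_{S}\supp{}{S}$, which is where Proposition \ref{C_null} carries the real weight by guaranteeing that null vectors do not spread across the connection edges into the N-forest. I expect this support-localization step to be the main conceptual obstacle, everything else being direct substitution.
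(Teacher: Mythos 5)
Your proof is correct, but there is nothing in this paper to compare it against: the corollary is imported verbatim from \cite{molina_jaume2016tree} and stated without proof, so you have supplied an argument the paper omits. What you have done is, in effect, run the paper's proof of Theorem \ref{Trknulloftrees} in reverse: there the authors take \(\nu(T)=\core{T}+\tfrac{1}{2}v(\mathcal{F}_{N}(T))\) as known and combine it with \(\rank{T}=2\nu(T)\) from \cite{bevis1995ranks}, \(v(T)=v(\mathcal{F}_{S}(T))+v(\mathcal{F}_{N}(T))\), and \(v(\mathcal{F}_{S}(T))=\core{T}+\supp{}{T}\) to obtain the rank and nullity formulas, whereas you compute \(\nulidad{S}=v(S)-2\core{S}\) for each S-tree from Theorem \ref{MatchingMaximumStrees}, sum via Proposition \ref{C_null}, and recover \(\nu(T)\) from rank--nullity, then get \(\alpha(T)\) from K\"onig--Egerv\'ary together with Gallai's identity. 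This is non-circular, since you never invoke Theorem \ref{Trknulloftrees} or the corollary itself, and every ingredient is a result the paper quotes independently. The two points you flag as delicate do hold: \(V(T)=V(\mathcal{F}_{S}(T))\cup V(\mathcal{F}_{N}(T))\) disjointly, by the very construction of the null decomposition, and \(\supp{}{T}=\sum_{S}\supp{}{S}\), because the direct sum in Proposition \ref{C_null} is over lifted spaces supported on pairwise disjoint vertex sets, so the support of any null vector of \(T\) is contained in the union of the \(\Supp{}{S}\), and conversely each lifted null vector of an \(S\) contributes its full support. A more combinatorial route, presumably closer to the original proof in \cite{molina_jaume2016tree}, would decompose a maximum matching directly: by Corollary \ref{CoroMatching} no maximum matching uses connection edges, so it splits into maximum matchings of the S-trees (each of size \(\core{S}\) by Theorem \ref{MatchingMaximumStrees}) and of the N-trees (perfect, each of size \(v(N)/2\)), and similarly for maximum independent sets via Theorem \ref{S-tree-Independent}. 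Your linear-algebraic route trades that matching bookkeeping for rank--nullity; both are sound.
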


\begin{corollary}[\cite{molina_jaume2016tree}]
	Let \(T\) be a tree, then
	\[
	m(T)=\prod_{S \in\mathcal{F}_{S}(T) }m(S)
	\]
\end{corollary}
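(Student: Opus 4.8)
The plan is to set up a bijection between \(\mathcal{M}(T)\) and the product of the sets of maximum matchings of the parts in the null decomposition, and then to observe that the N-trees contribute nothing to the count. Recall first that the vertex sets of the trees in \(\mathcal{F}_{S}(T)\) and in \(\mathcal{F}_{N}(T)\) partition \(V(T)\), and that the only edges of \(T\) joining two distinct parts are the connection edges of \(\ConnE{T}\). By Corollary~\ref{CoroMatching}, every \(M \in \mathcal{M}(T)\) satisfies \(M \cap \ConnE{T} = \emptyset\); hence each edge of \(M\) lies inside a single part, and \(M\) splits as a disjoint union
\[
M = \Big(\bigsqcup_{S \in \mathcal{F}_{S}(T)} M_{S}\Big) \sqcup \Big(\bigsqcup_{N \in \mathcal{F}_{N}(T)} M_{N}\Big),
\qquad M_{S} := M \cap E(S), \quad M_{N} := M \cap E(N).
\]

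Next I would control the sizes. Each \(S \in \mathcal{F}_{S}(T)\) satisfies \(\nu(S)=\core{S}\) by Theorem~\ref{MatchingMaximumStrees}, and each \(N \in \mathcal{F}_{N}(T)\) is an N-tree, hence has a perfect matching and \(\nu(N)=v(N)/2\). Corollary~\ref{nualpha_coresupp} gives \(\nu(T)=\core{T}+v(\mathcal{F}_{N}(T))/2\); since the sets \(\Core{S}\) are pairwise disjoint, this equals \(\sum_{S}\core{S}+\sum_{N}v(N)/2=\sum_{S}\nu(S)+\sum_{N}\nu(N)\). As \(M_{S}\) and \(M_{N}\) are matchings of \(S\) and \(N\), we have \(|M_{S}|\le \nu(S)\) and \(|M_{N}|\le \nu(N)\), while \(\sum_{S}|M_{S}|+\sum_{N}|M_{N}|=|M|=\nu(T)\). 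Comparing with the previous equality for \(\nu(T)\) forces equality termwise: each \(M_{S}\) is a maximum matching of \(S\) and each \(M_{N}\) is a maximum matching of \(N\). This defines the forward map \(M \mapsto \big((M_{S})_{S},(M_{N})_{N}\big)\).

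For the reverse direction, given any \(M_{S}\in \mathcal{M}(S)\) for each \(S\) and \(M_{N}\in\mathcal{M}(N)\) for each \(N\), the union \(M:=\bigsqcup_{S}M_{S}\sqcup\bigsqcup_{N}M_{N}\) is again a matching of \(T\), because the parts are pairwise vertex-disjoint, and its cardinality is \(\sum_{S}\nu(S)+\sum_{N}\nu(N)=\nu(T)\), so \(M\in\mathcal{M}(T)\). The two maps are mutually inverse, yielding \(m(T)=\prod_{S\in\mathcal{F}_{S}(T)}m(S)\cdot\prod_{N\in\mathcal{F}_{N}(T)}m(N)\). Finally, a tree has at most one perfect matching, so each N-tree \(N\) has a unique maximum matching and \(m(N)=1\); hence \(\prod_{N}m(N)=1\) and the formula collapses to \(m(T)=\prod_{S\in\mathcal{F}_{S}(T)}m(S)\).

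The one genuinely nontrivial input is Corollary~\ref{CoroMatching}: that no connection edge ever appears in a maximum matching is exactly what makes the decomposition of \(M\) into per-part pieces clean, and everything afterward is a counting argument. The only subtle step in that argument is the passage from a global size equality to termwise equality, which rests on the exact additivity of \(\nu\) over the null decomposition supplied by Corollary~\ref{nualpha_coresupp}.
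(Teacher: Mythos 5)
Your proof is correct. The paper does not actually prove this corollary---it is imported from \cite{molina_jaume2016tree}---but it does prove the exactly analogous factorization over S-atoms, \(m(S)=\prod_{\mathfrak{A}\in\mathcal{F}_{A}(S)}m(\mathfrak{A})\), by the same decompose-and-recombine scheme you use: a maximum matching avoids the edges joining distinct parts, so it splits into per-part matchings, and conversely a union of per-part maximum matchings is again a maximum matching. Where the paper's version of this argument is terse (it simply asserts that each restriction of \(M\) to a part is maximum and that unions of per-part maximum matchings are maximum), you supply the two ingredients that make the assertion airtight and that are genuinely needed for the statement at hand. First, the passage from the global equality \(|M|=\nu(T)\) to termwise maximality of each \(M_S\) and \(M_N\), which you correctly ground in the additivity \(\nu(T)=\sum_{S}\nu(S)+\sum_{N}\nu(N)\) obtained from Corollary~\ref{nualpha_coresupp}, Theorem~\ref{MatchingMaximumStrees}, and the disjointness of the cores. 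Second, the observation that each N-tree, having a perfect matching and no cycles, has exactly one maximum matching, so the factor \(\prod_{N}m(N)\) equals \(1\) and the product legitimately ranges over \(\mathcal{F}_{S}(T)\) only; this step is invisible in the atom-level proof (where there is no N-part) but is indispensable here, and your handling of it via uniqueness of perfect matchings in trees is right.
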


%
%
%

Now a new result.

\begin{theorem} \label{Trknulloftrees}
	Let \(T\) b a tree. Then
	\begin{align*}
	\rank{T} &=2\core{T}+v(\mathcal{F}_{N}(T))\\
	\nulidad{T} & = \supp{}{T}-\core{T}
	\end{align*}
\end{theorem}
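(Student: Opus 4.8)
The plan is to derive the rank formula directly from the matching number and then read off the nullity from the rank--nullity theorem, after rewriting $v(T)$ in terms of supports and cores. The two identities in Corollary \ref{nualpha_coresupp} do almost all of the work, so the argument is mostly a matter of bookkeeping with the vertex sets of the null decomposition.

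First I would handle the rank. Recalling from \cite{bevis1995ranks} that $\rank{T}=2\nu(T)$, I substitute the value of $\nu(T)$ given in Corollary \ref{nualpha_coresupp}, obtaining $\rank{T}=2\bigl(\core{T}+\tfrac{v(\mathcal{F}_{N}(T))}{2}\bigr)=2\core{T}+v(\mathcal{F}_{N}(T))$. This settles the first formula with no further effort.

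For the nullity I note that $A(T)$ is a $v(T)\times v(T)$ matrix, so rank--nullity gives $\nulidad{T}=v(T)-\rank{T}$, and it suffices to compute $v(T)$. I would split the vertex set along the null decomposition as $v(T)=v(\mathcal{F}_{S}(T))+v(\mathcal{F}_{N}(T))$ and then analyse each S-tree separately. The central observation is that for every $S\in\mathcal{F}_{S}(T)$ the vertex set splits as a disjoint union $V(S)=\Supp{}{S}\sqcup\Core{S}$: the two sets are disjoint because $\Supp{}{S}$ is independent by Theorem \ref{S-tree-Independent} while $\Core{S}=N(\Supp{}{S})$, and they cover $V(S)$ because $N[\Supp{}{S}]=V(S)$ by the very definition of an S-tree; hence $v(S)=\supp{}{S}+\core{S}$.

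I would then sum over the S-forest. Since the cores of distinct components are disjoint, $\core{T}=\sum_{S}\core{S}$, and the supports likewise add up, $\supp{}{T}=\sum_{S}\supp{}{S}$. This last equality is the step I expect to need the most care: it rests on Proposition \ref{C_null}, where $\N{T}$ is written as a direct sum of the lifts $\up{\N{S}}{$T$}{$S$}$, which are supported on pairwise disjoint vertex sets, so that coordinates belonging to different S-trees cannot interfere and the total support really is the sum of the individual ones. Granting this, $v(\mathcal{F}_{S}(T))=\supp{}{T}+\core{T}$, whence $v(T)=\supp{}{T}+\core{T}+v(\mathcal{F}_{N}(T))$. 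Substituting this together with the rank formula into $\nulidad{T}=v(T)-\rank{T}$ cancels the $\core{T}$ and $v(\mathcal{F}_{N}(T))$ contributions and leaves $\nulidad{T}=\supp{}{T}-\core{T}$, as required.
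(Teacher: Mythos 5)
Your proposal is correct and follows essentially the same route as the paper: both obtain the rank formula from $\rank{T}=2\nu(T)$ combined with Corollary \ref{nualpha_coresupp}, and both get the nullity from rank--nullity together with the vertex count $v(T)=\supp{}{T}+\core{T}+v(\mathcal{F}_{N}(T))$. The only difference is that the paper cites the identities $v(T)=v(\mathcal{F}_{S}(T))+v(\mathcal{F}_{N}(T))$ and $v(\mathcal{F}_{S}(T))=\core{T}+\supp{}{T}$ from \cite{molina_jaume2016tree}, whereas you re-derive them (correctly) from the definition of S-tree, Theorem \ref{S-tree-Independent}, and Proposition \ref{C_null}.
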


\begin{proof}
	From \cite{bevis1995ranks} we know that the rank of the adjacency matrix of \(T\) equals
	two times the matching number of \(T\), and from \cite{molina_jaume2016tree} we know that \(\nu(T)=\core{T}+\frac{1}{2}v(\mathcal{F}_{N}(T)) \). Hence \(\rank{T} = 2\core{T} + v(\mathcal{F}_{N}(T))\). On the other hand, again from \cite{molina_jaume2016tree}, we know that \(n\), the order of \(T\), equals \(v(\mathcal{F}_{S}(T))+v(\mathcal{F}_{N}(T))\), and that \(v(\mathcal{F}_{S}(T))=\core{T}+\supp{}{T}\). As \(\nulidad{T}=n-\rank{T}\), we obtain \(\nulidad{T} = \supp{}{T}-\core{T}\).
\end{proof}

\begin{corollary} \label{jaumelario}
	Let \(T\) be a tree. Then \(\nu(T)=\alpha(T)-\nulidad{T}\).
\end{corollary}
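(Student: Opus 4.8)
The plan is to read off the statement directly from the two formulas already established, namely Corollary \ref{nualpha_coresupp} and Theorem \ref{Trknulloftrees}, so that no new structural argument about $T$ is needed. The key observation is that all three quantities $\nu(T)$, $\alpha(T)$, and $\nulidad{T}$ have been expressed in terms of the same three invariants of the null decomposition: $\core{T}$, $\supp{}{T}$, and $v(\mathcal{F}_{N}(T))$.

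Concretely, I would start from the second equation of Corollary \ref{nualpha_coresupp}, which gives
\[
\alpha(T) = \supp{}{T} + \frac{v(\mathcal{F}_{N}(T))}{2},
\]
and subtract the nullity as computed in the second line of Theorem \ref{Trknulloftrees}, namely $\nulidad{T} = \supp{}{T} - \core{T}$. The terms $\supp{}{T}$ cancel, leaving
\[
\alpha(T) - \nulidad{T} = \core{T} + \frac{v(\mathcal{F}_{N}(T))}{2}.
\]

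Finally I would recognize the right-hand side as exactly the expression for $\nu(T)$ supplied by the first equation of Corollary \ref{nualpha_coresupp}, which yields $\nu(T) = \alpha(T) - \nulidad{T}$, as desired. Since the identity is purely an algebraic consequence of three previously proved equalities, there is no genuine obstacle: the only care required is bookkeeping to ensure the $\frac{1}{2}v(\mathcal{F}_{N}(T))$ terms are carried consistently and that $\supp{}{T}$ cancels cleanly. One might optionally remark that the corollary is the graph-theoretic shadow of the rank–nullity theorem, since $\nulidad{T} = \alpha(T) - \nu(T)$ compares the deficiency of the adjacency matrix against the gap between the independence and matching numbers.
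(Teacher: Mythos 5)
Your proposal is correct and is essentially the paper's own proof: both arguments combine the same three identities from Corollary \ref{nualpha_coresupp} and Theorem \ref{Trknulloftrees}, differing only in whether one starts from the expression for \(\nu(T)\) or for \(\alpha(T)\) before cancelling \(\supp{}{T}\). No gap to report.
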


\begin{proof}
	By Corollary \ref{nualpha_coresupp} and Theorem \ref{Trknulloftrees}
	\begin{align*}
	\nu(T) & = \core{T}	+\frac{v(\mathcal{F}_{N}(T))}{2}\\
	{} & =  \core{T} + \alpha(T) -\supp{}{T}\\
	{} & = \alpha(T)-\nulidad{T}
	\end{align*}
\end{proof}

\section{Graph operations closed over S-trees}


Now we will define two graph operations under which S-trees are closed. These operations are important because they allow to think about S-trees without finding null spaces.

	\subsection{Stellare}
	

\begin{definition}
Let \(G\) be a labeled graph of order \(n\), with labels \([n]\).
The \(*(k_{1},\dots,k_{n})\)-\textbf{stellare} of \(G\) is the graph obtained by adding \(k_{i}\geq 2\) pendant vertices to vertex \(i\) of \(G\).
With \(*G\) we denote an arbitrary stellare of \(G\).
\end{definition}

\begin{figure}[h] 
	\centering
	\begin{subfigure}[c]{0.3\textwidth}
		\begin{tikzpicture}[thick,scale=0.2]
		
		\draw 
		(0,0) node{1}
		(0,5) node{2} -- (0,0)
		(5,0) node{3} -- (0,0)
		(5,5) node{4} -- (5,0)
		(10,0) node{5} -- (5,0)
		(15,0) node{6} -- (10,0);
		
		\end{tikzpicture}
		\caption{A tree $T$}
		
	\end{subfigure}
	
	\hfill
	
	\begin{subfigure}[c]{\textwidth}
		\centering
		\begin{tikzpicture}[thick,scale=0.2]
		\draw 
		(0,0) node[fill=gray!]{\tiny{1,0}}
		(0,5) node[fill=gray!]{\tiny{2,0}} -- (0,0)
		(5,0) node[fill=gray!]{\tiny{3,0}} -- (0,0)
		(5,5) node[fill=gray!]{\tiny{4,0}} -- (5,0)
		(10,0) node[fill=gray!]{\tiny{5,0}} -- (5,0)
		(15,0) node[fill=gray!]{\tiny{6,0}} -- (10,0)
		
		(0,10) node{\tiny{2,2}} -- (0,5)
		(3.535,8.535) node{\tiny{2,3}} -- (0,5)
		(-3.535,8.535) node{\tiny{2,1}} -- (0,5)
		
		(-5,0) node{\tiny{1,2}} -- (0,0)
		(-3.535,-3.535) node{\tiny{1,2}} -- (0,0)
		(-3.535,3.535) node{\tiny{1,1}} -- (0,0)
		(0,-5) node{\tiny{1,4}} -- (0,0)
		
		(5,-5) node{\tiny{3,2}} -- (5,0)
		(8.535,-3.535) node{\tiny{3,3}} -- (5,0)
		(1.865,-3.535) node{\tiny{3,1}} -- (5,0)
		
		(20,0) node{\tiny{6,2}} -- (15,0)
		(18.535,-3.535) node{\tiny{6,3}} -- (15,0)
		(18.535,3.535) node{\tiny{6,1}} -- (15,0)
		
		(13.535,3.535) node{\tiny{5,1}} -- (10,0)
		(13.535,-3.535) node{\tiny{5,2}} -- (10,0)
		
		(8.535,8.236) node{\tiny{4,1}} -- (5,5)
		(8.535,2) node{\tiny{4,2}} -- (5,5);
		
		\end{tikzpicture}
		\caption{$*(4,3,3,2,2,3)T$}
	\end{subfigure}
	\caption{$*(4,3,3,2,2,3)T$ is an stellare tree of $T$}
	\label{Fig1}
\end{figure}
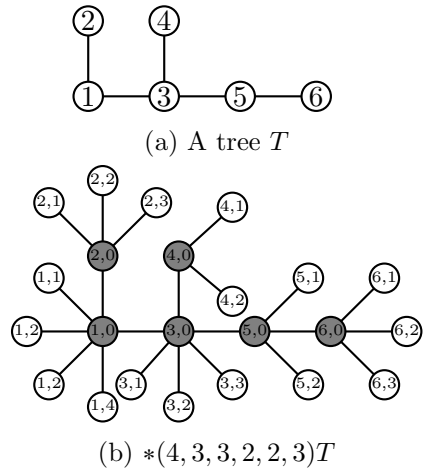

%
%
%
%
%
%
%
%
%
%
%

\begin{theorem} \label{T1Stellare}
Let \(T\) be a tree, then \(*T\) is an S-tree, \(\Core{*T}=V(T)\), and \(\Supp{}{*T}=V(*T) \setminus V(T)\).
\end{theorem}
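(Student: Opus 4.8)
The plan is to compute the null space $\N{*T}$ of the adjacency matrix of $*T$ directly from its defining linear equations, and then read off all three assertions from the description of its support. First observe that $*T$ is obtained from a tree by attaching leaves, so $*T$ is again a tree and the convention $\Supp{}{*T}=\Supp{*T}{\N{*T}}$ applies. I would index a vector $x\in\mathbb{R}^{*T}$ by writing $x_i$ for the value at an original vertex $i\in V(T)$ and $x_{i,l}$ for the value at the $l$-th pendant attached to $i$, where $l\in[k_i]$.

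The heart of the argument is the following observation. For a pendant vertex $(i,l)$ the unique neighbor is $i$, so the null-space equation at $(i,l)$ reads $x_i=0$; since every original vertex carries at least one pendant (indeed $k_i\ge 2$), this forces $x_i=0$ for all $i\in V(T)$. Feeding this back into the equation at an original vertex $i$, whose neighbors are its tree-neighbors $N_{T}(i)\subseteq V(T)$ together with its $k_i$ pendants, the tree-neighbor contributions vanish and the equation collapses to $\sum_{l=1}^{k_i}x_{i,l}=0$. Hence $\N{*T}$ consists exactly of those $x$ that vanish on $V(T)$ and whose pendant blocks each sum to zero; as these blocks are independent, $\N{*T}\cong\bigoplus_{i\in V(T)}\{y\in\mathbb{R}^{k_i}:\sum_l y_l=0\}$.

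From this description the three claims follow quickly. For the support, note that no null vector is nonzero on an original vertex, while for each $i$ the sum-zero subspace of $\mathbb{R}^{k_i}$ has full support (for any two pendants of $i$, the null vector taking values $+1$ and $-1$ on them and $0$ elsewhere is supported on both). Therefore $\Supp{}{*T}$ equals the set of all pendant vertices, which is precisely $V(*T)\setminus V(T)$, giving the third assertion. For the second, every pendant $(i,l)$ has unique neighbor $i$, and every original vertex arises this way, so $\Core{*T}=N(\Supp{}{*T})=V(T)$. Finally, combining these, $N[\Supp{}{*T}]=\Supp{}{*T}\cup N(\Supp{}{*T})=(V(*T)\setminus V(T))\cup V(T)=V(*T)$, which is exactly the S-tree condition. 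There is no genuine obstacle here; the one point demanding care is the hypothesis $k_i\ge 2$, which is exactly what makes every pendant a support vertex—with $k_i=1$ the lone pendant would itself be forced to zero and would drop out of the support.
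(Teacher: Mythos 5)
Your proposal is correct, and its key construction is the same one the paper uses: the vectors taking values $+1$ and $-1$ on two pendant vertices attached to the same original vertex (this is where $k_i\ge 2$ enters, as you note). The difference is in how much each proof actually establishes. The paper exhibits these vectors, deduces the inclusion $V(*T)\setminus V(T)\subseteq\Supp{}{*T}$, observes $N[V(*T)\setminus V(T)]=V(*T)$, and then simply asserts the remaining claims; as written, it never verifies the reverse inclusion $\Supp{}{*T}\subseteq V(*T)\setminus V(T)$, which is exactly what is needed to conclude $\Supp{}{*T}=V(*T)\setminus V(T)$ and hence $\Core{*T}=N(\Supp{}{*T})=V(T)$ (alternatively this gap can be closed by invoking the independence of the support of a tree from the cited earlier work, but the paper does not say so). Your proof fills this in cleanly: the null-space equation at each pendant $(i,l)$ forces $x_i=0$, so no original vertex lies in the support, and feeding this back in gives the complete description $\N{*T}\cong\bigoplus_{i\in V(T)}\{y\in\mathbb{R}^{k_i}:\sum_l y_l=0\}$, from which all three assertions follow at once. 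So you pay a little more (solving the full system rather than exhibiting a few null vectors) and in exchange get a self-contained argument with no appeal to outside structure theory, plus the exact null space and its dimension $\sum_i(k_i-1)$, which the paper only recovers later as a corollary.
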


\begin{proof}
Take \(v \in V(*T) \setminus V(T) \), by definition of stellare there exists \(u \in V(T)\) and \(w \in V(*T) \) such that \(u \sim v \), the vertices \(v,w \) are neighbors of \(u\) in \(*T\), and \(w \notin V(T)\). Let \(x\) be a vector of \(\mathbb{R}^{*T}\), such that
\[
x_{i}=\left\lbrace
\begin{array}{rl}
1&  \text{ if} \; i = v,\\
-1 &   \text{ if} \; i=w,\\
0 &  \text{ otherwise.}
\end{array}
\right.
\]
Clearly \(A(*T)x=\theta\). Hence \(V(*T) \setminus V(T) \subset \Supp{}{*T}\). By stellare definition we have that \(N[V(*T) \setminus V(T)]=V(*T)\). Hence we conclude that \(*T\) is an S-tree, \(\Core{*T}=V(T)\), and \(\Supp{}{*T}=V(*T) \setminus V(T)\).
\end{proof}

\begin{corollary}
	Given a tree \(T\) with labels \([n]\), and a list of \(v(T)\) integers greater or equals to 2: \(k_{1},\dots,k_{v(T)}\). Then
	\begin{enumerate}
		\item \(\nulidad{ *(k_{1},\dots,k_{n})T}= \sum_{i=1}^{n}k_{i} -n \geq n \geq \nulidad{T}\), where equality holds if and only if \(n=1\) and \(k_{1}=2\).
		\item \(\rank{*T}=2n > \rank{T}\).
		\item \(\alpha( *(k_{1},\dots,k_{n})T)=\sum_{i=1}^{n}k_{i} \geq 2n >\alpha(T)\).
		\item \(\nu(*T)=n>\nu(T)\).
		\item \(m(*(k_{1},\dots,k_{n})T)=\prod_{i=1}^{n}k_{i}\).
		\item \(\gamma(*T)=n>\gamma(T)\), and \(V(T)\) is the only minimum (and total, if \(n\geq 2\)) dominating set of any \(*T\). 
	\end{enumerate}
\end{corollary}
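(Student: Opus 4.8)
The plan is to reduce every item to Theorem \ref{T1Stellare} together with the S-tree structure theorems already proved, since that theorem pins down the only two quantities that matter: as $*T$ is an S-tree we have $\Core{*T}=V(T)$ and $\Supp{}{*T}=V(*T)\setminus V(T)$, whence $\core{*T}=n$ and $\supp{}{*T}=\sum_{i=1}^{n}k_{i}$. Being an S-tree also forces $\mathcal{F}_{S}(*T)=\{*T\}$ and hence an empty N-forest, so $v(\mathcal{F}_{N}(*T))=0$. These three facts are essentially all I need for items (1)--(4).

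For (1) and (2) I would apply Theorem \ref{Trknulloftrees} to the tree $*T$: it yields $\rank{*T}=2\core{*T}+v(\mathcal{F}_{N}(*T))=2n$ and $\nulidad{*T}=\supp{}{*T}-\core{*T}=\sum_{i}k_{i}-n$. The inequalities in (1) then reduce to elementary tree bounds: $\sum_{i}k_{i}\ge 2n$ since each $k_{i}\ge 2$ (so $\sum_{i}k_{i}-n\ge n$), while $\nulidad{T}=n-\rank{T}\le n$ with equality exactly when $\rank{T}=0$, i.e. when the tree $T$ has no edge, i.e. $n=1$; the two equalities therefore hold together iff every $k_{i}=2$ \emph{and} $n=1$, i.e. iff $n=1$ and $k_{1}=2$. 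For (2) the strict bound follows from $\rank{T}=2\nu(T)\le 2\lfloor n/2\rfloor\le n<2n$. Items (3) and (4) are immediate from the S-tree theorems: Theorem \ref{S-tree-Independent} gives $\alpha(*T)=\supp{}{*T}=\sum_{i}k_{i}\ge 2n$, with $2n>\alpha(T)$ because $\alpha(T)\le n<2n$; Theorem \ref{MatchingMaximumStrees} gives $\nu(*T)=\core{*T}=n$, with $n>\nu(T)$ because $2\nu(T)\le n$.

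For (5) I would count maximum matchings directly. Since $\nu(*T)=n$ and, by Theorem \ref{MatchingMaximumStrees}, every edge of a maximum matching meets $\Core{*T}=V(T)$ in exactly one vertex, no original edge of $T$ can occur (it would meet the core twice); hence each of the $n$ edges is a pendant edge and each of the $n$ core vertices is matched to exactly one of its own pendants. Conversely, any selection of one pendant per core vertex is a matching of size $n$, thus maximum, and these selections are vertex-disjoint because pendant sets at distinct core vertices are disjoint. The count of such selections is $\prod_{i=1}^{n}k_{i}$, giving $m(*T)=\prod_{i}k_{i}$.

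Item (6) is where the real work lies, and I expect it to be the main obstacle. Writing $P_{i}$ for the set of $k_{i}\ge 2$ pendants at core vertex $i$, the vertex set partitions as $V(*T)=\bigsqcup_{i=1}^{n}(\{i\}\cup P_{i})$, and each pendant in $P_{i}$ has $i$ as its only neighbor. For any dominating set $D$, dominating $P_{i}$ forces $i\in D$ or $P_{i}\subseteq D$, so in every case $|D\cap(\{i\}\cup P_{i})|\ge 1$; summing over the partition gives $\gamma(*T)\ge n$, and since $V(T)$ clearly dominates we get $\gamma(*T)=n$. In a minimum $D$ each summand must equal $1$, and if some $i\notin D$ then all $k_{i}\ge 2$ pendants would lie in $D$, contradicting the count; hence $V(T)\subseteq D$ and so $D=V(T)$, giving uniqueness. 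That $V(T)$ is total when $n\ge 2$ holds because each core vertex then has a $T$-neighbor in $V(T)$ (a tree on $\ge 2$ vertices has minimum degree $\ge 1$), and $\gamma(*T)=n>\gamma(T)$ follows from the standard bound $\gamma(T)\le v(T)/2<n$ for graphs without isolated vertices. The only subtlety I anticipate is the degenerate case $n=1$, where $*T$ is a star and the strict comparison with $\gamma(T)$ and the totality claim must be handled separately.
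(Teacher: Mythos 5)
Your proposal is correct and follows essentially the same route as the paper: the paper's proof consists of nothing but citations to Theorems \ref{Trknulloftrees}, \ref{T1Stellare}, \ref{S-tree-Independent} and \ref{MatchingMaximumStrees} (with item 6 dismissed as ``clear''), and your argument is the natural filling-in of those citations, including the direct one-pendant-per-core-vertex count for item 5.

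One remark is worth adding: the ``subtlety'' you flag in item 6 is not merely a case to handle separately --- it is a genuine defect of the statement as written. For \(n=1\) the stellare \(*T\) is a star \(K_{1,k_{1}}\), so \(\gamma(*T)=1=\gamma(T)\), and the claimed strict inequality \(\gamma(*T)>\gamma(T)\) fails; item 6 is only valid for \(n\geq 2\), which is exactly the hypothesis under which your Ore-type bound \(\gamma(T)\leq v(T)/2<n\) applies. The paper's ``Statement 6 is clear'' overlooks this degenerate case, so your instinct here actually caught an error in the paper rather than a gap in your own argument.
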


\begin{proof}
The statements 1 and 2 follow from Theorem \ref{Trknulloftrees} and Theorem \ref{T1Stellare}. 

Statements 3 follows from Theorem \ref{Trknulloftrees}, Theorem \ref{T1Stellare}, and Theorem \ref{S-tree-Independent}.

Statements 4 and 5 follow from Theorem \ref{Trknulloftrees}, Theorem \ref{T1Stellare}, and Theorem \ref{MatchingMaximumStrees}.

Statement 6 is clear.
\end{proof}

Let \(T\) be a tree, and \(U \subset V(T)\), we set
\begin{equation}\label{eu_notation}
e_{U}:=\sum_{v \in U} e_{v}
\end{equation}
where \(e_{v}\in \mathbb{R}^{T}\), with \((e_{v})_u=1\) if \(u=v\), and \(0\) otherwise.
\begin{definition}
Let \(v\in \Core{T}\), the \(v\)-\textbf{bouquet} of \(T\), denoted by \(R(v)\), is
\[
R(v):=\{u \in \Supp{}{T}: u \sim v \}
\]
\end{definition}

%
%

\begin{lemma}
	Let \(T\) be a tree and \(*T\) an stellare of \(T\). Then the set of vectors of \(\mathbb{R}^{*T}\)
	\[
	\mathcal{B}:=\{e_{v},e_{R(v)} \in \mathbb{R}^{*T} : v \in V(T)\}
	\]
	where \(R(v)\) is the \(v\)-bouquet of \(*T\), is a basis of \(\Rank{*T}\).
\end{lemma}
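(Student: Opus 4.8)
The plan is to combine a dimension count with two verifications: that $\mathcal{B}\subseteq\Rank{*T}$, and that $\mathcal{B}$ is linearly independent. Since $\Core{*T}=V(T)$ has $n$ vertices and $\mathcal{B}$ contributes exactly one $e_v$ and one $e_{R(v)}$ for each $v\in V(T)$, we have $|\mathcal{B}|=2n$. By the preceding corollary $\rank{*T}=2n$, and the range of a matrix has dimension equal to its rank, so $\dim\Rank{*T}=2n=|\mathcal{B}|$. Hence it suffices to prove the two claims above: an independent subset of $\Rank{*T}$ whose cardinality equals $\dim\Rank{*T}$ is automatically a basis, so I will not need to argue spanning separately.

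For the membership claim I would compute columns of $A(*T)$, using that the column indexed by a vertex $w$ equals $A(*T)e_w=e_{N_{*T}(w)}$. By Theorem \ref{T1Stellare}, $\Supp{}{*T}=V(*T)\setminus V(T)$ consists precisely of the pendant vertices added by the stellare, and each such pendant $u$ has a single neighbour $v\in V(T)$; therefore $A(*T)e_u=e_v$, which shows $e_v\in\Rank{*T}$ for every $v\in V(T)$. For the bouquet vectors, observe that the stellare adds no edges inside $V(T)$, so for $v\in V(T)$ the neighbourhood splits as $N_{*T}(v)=N_T(v)\sqcup R(v)$ with $N_T(v)\subseteq V(T)$. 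Thus $A(*T)e_v=e_{N_T(v)}+e_{R(v)}$ in $\mathbb{R}^{*T}$, and since $e_{N_T(v)}=\sum_{w\in N_T(v)}e_w$ is a sum of vectors already known to lie in $\Rank{*T}$, we get $e_{R(v)}=A(*T)e_v-e_{N_T(v)}\in\Rank{*T}$.

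Linear independence I would settle via supports. The $e_v$ with $v\in V(T)$ are supported on the core $V(T)$, whereas the $e_{R(v)}$ are supported on $\Supp{}{*T}$; as core and support partition $V(*T)$, no linear relation can involve both families at once. The $e_v$ are distinct standard basis vectors, hence independent, and the bouquets $\{R(v):v\in V(T)\}$ are pairwise disjoint and nonempty (each $k_v\geq 2$), so the $e_{R(v)}$ have disjoint supports and are likewise independent. Together these give independence of all of $\mathcal{B}$.

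I do not anticipate a genuine difficulty; the only delicate point is the bookkeeping that identifies $\Supp{}{*T}$ with the added pendants and confirms that the bouquets partition it, both of which follow at once from Theorem \ref{T1Stellare} and the definition of stellare. Once these identifications are fixed, the column computation (which reduces everything to reading off neighbourhoods) and the disjoint-support argument close the proof.
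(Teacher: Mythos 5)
Your proposal is correct, and it is essentially the mirror image of the paper's proof. The paper proves the single inclusion \(\Rank{*T}\subseteq\operatorname{span}(\mathcal{B})\): each column of \(A(*T)\) is written in terms of \(\mathcal{B}\) (the column of an added pendant \(u\) with neighbour \(w\in V(T)\) is \(e_{w}\); the column of \(v\in V(T)\) is \(e_{R(v)}+\sum_{w\in N_{T}(v)}e_{w}\)), after which the count \(|\mathcal{B}|=2n=\rank{*T}\) forces \(\mathcal{B}\) to be a basis, with linear independence coming for free from the dimension count. You prove the opposite inclusion \(\mathcal{B}\subseteq\Rank{*T}\) by solving those same column identities for \(e_{w}\) and \(e_{R(v)}\), verify independence directly via the disjoint supports of the two families (core versus added pendants, and pairwise disjoint bouquets), and then let the same count deliver spanning for free. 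The underlying bookkeeping — the neighbourhood structure of the stellare from Theorem \ref{T1Stellare} plus \(\rank{*T}=2n\) from the preceding corollary — is identical in both arguments. The paper's version is marginally leaner, since independence never has to be checked; yours has the small virtue of exhibiting each vector of \(\mathcal{B}\) explicitly as an element of the range (membership is witnessed rather than inferred), at the cost of the extra disjoint-support step. Your argument is complete as stated.
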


\begin{proof}
	As \(\rank{*T}=2v(T)\) we only need to prove that the columns of the adjacency matrix of \(*T\) are linear combinations of the vectors of \(\mathcal{B}\). Let \(v \in V(*T)\) with \(A_{v}\) we denote the column of \(A(*T)\) associated to the vertex \(v\). Thus, if \(v \in \Supp{}{*T}\), then 
	\(A_{v}=e_{w}\), where \(w \in \Core{*T}=V(T)\) and \(w \sim v\). If \(v \in \Core{*T}=V(T)\), then 
	\[
	A_{v}=e_{R(v)}+\sum_{\substack{w \in V(T) \\ w \sim v}} e_{w}
	\]
\end{proof}

Given a tree \(T\) with labels \([n]\). The \textbf{stellare labeling} of \(*(k_{1},\dots,k_{n})T\) is the set 
\[
\{(u,w)\; : \; u \in [n], \text{ and } w \in \{0,1, \dots,k_{u} \}\}
\]
where the vertices label with \((u,0)\) are core vertices of \(*T\), i.e. \(V(T)\), and the vertices label with \((u,w)\), where \(w \in \{1,\dots , k_{u}\} \)
are supported vertices of \(*T\), which are neighbors of \(u\). See Figure \ref{Fig1}.

\begin{lemma}
	Let \(T\) be a tree of order \(n\) and \(*(k_{1},\dots,k_{n})T\) an stellare of \(T\). Then the following set of vectors is a basis of null space of \(*(k_{1},\dots,k_{n})T\)
	\[
	\mathcal{B}:=\{\vec{b}(i,j) \in \mathbb{R}^{*T}  : i \in [n], j \in \{2,\dots, k_{i}\}\}
	\]
	where
	\[
	\vec{b}(i,j)_{(u,w)}= 
	\left\lbrace 
	\begin{array}{rl} 
		1  & \text{ if } u=i \text{ and } w=1,\\
		-1 & \text{ if } u=i \text{ and } w=j,\\
		0 & \text{ otherwise}.
	\end{array}
	\right.
	\]
\end{lemma}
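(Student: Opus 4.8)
The plan is to verify three facts and combine them: that every vector $\vec{b}(i,j)$ lies in $\N{*T}$, that the family $\mathcal{B}$ is linearly independent, and that $|\mathcal{B}|$ coincides with $\nulidad{*(k_{1},\dots,k_{n})T}$. Since a linearly independent subset of the null space whose size equals the nullity is automatically a basis, these three together give the claim. The cardinality count is immediate: for each $i\in[n]$ there are $k_i-1$ admissible values of $j$, whence $|\mathcal{B}|=\sum_{i=1}^{n}(k_i-1)=\sum_{i=1}^{n}k_i-n$. By Theorem~\ref{T1Stellare} we have $\core{*T}=|V(T)|=n$ and $\supp{}{*T}=|V(*T)\setminus V(T)|=\sum_{i=1}^{n}k_i$, so Theorem~\ref{Trknulloftrees} yields $\nulidad{*T}=\supp{}{*T}-\core{*T}=\sum_{i=1}^{n}k_i-n=|\mathcal{B}|$.

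First I would check null-space membership by evaluating $A(*T)\vec{b}(i,j)$ coordinate by coordinate. The vector $\vec{b}(i,j)$ is supported on exactly the two pendant vertices $(i,1)$ and $(i,j)$, which by the stellare labeling are leaves both attached to the single core vertex $(i,0)$. At any pendant vertex the unique neighbor is its core vertex, where $\vec{b}(i,j)$ vanishes, so those coordinates of $A(*T)\vec{b}(i,j)$ are zero; at any core vertex $(u,0)$ with $u\neq i$ all pendant neighbors carry value $0$ and all core neighbors are core vertices (also value $0$), so that coordinate vanishes as well. The only coordinate where cancellation genuinely occurs is $(i,0)$: its pendant neighbors include precisely $(i,1)$ and $(i,j)$, contributing $1+(-1)=0$, while its core neighbors contribute nothing. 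Hence $A(*T)\vec{b}(i,j)=\theta$.

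Next I would establish linear independence by isolating each vector through a distinguished coordinate. Fix $(i,j)$ with $j\geq 2$ and read the coordinate indexed by the pendant vertex $(i,j)$. Among all members of $\mathcal{B}$ the only one nonzero there is $\vec{b}(i,j)$ itself, since a vector $\vec{b}(i',j')$ is nonzero only at $(i',1)$ and $(i',j')$; as $j\geq 2>1$ the former is excluded, forcing $(i',j')=(i,j)$. Thus in any relation $\sum_{i,j}c_{i,j}\vec{b}(i,j)=\theta$, reading off the $(i,j)$-coordinate gives $-c_{i,j}=0$, so all coefficients vanish.

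This argument is entirely elementary, so there is no substantial obstacle; the only place demanding care is the bookkeeping of the stellare labeling, namely confirming that $(i,1)$ and $(i,j)$ share the common core neighbor $(i,0)$, which is what localizes the cancellation in the membership check to a single vertex. With membership, independence, and the cardinality match all in hand, $\mathcal{B}$ is a basis of $\N{*(k_{1},\dots,k_{n})T}$.
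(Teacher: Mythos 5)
Your proposal is correct and takes essentially the same approach as the paper's proof: verify that each \(\vec{b}(i,j)\) lies in the null space by direct computation, note that \(\mathcal{B}\) is linearly independent, and conclude from the count \(|\mathcal{B}|=\supp{}{*T}-\core{*T}=\nulidad{*T}\) via Theorems \ref{T1Stellare} and \ref{Trknulloftrees}. The only difference is that you spell out the coordinate-by-coordinate cancellation and the isolation argument for independence, which the paper compresses into ``a direct computation'' and an unproved assertion.
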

\begin{proof} 
	The set \(\mathcal{B}\) is a set of linear independent vectors. A direct computation shows that \(A(*T)\vec{b}=\theta\) for all \(\vec{b} \in \mathcal{B}\). As \(|\mathcal{B}|=\supp{}{*T}-\core{*T}\), then by Theorem \ref{Trknulloftrees} the set \(\mathcal{B}\) is a basis of the null space of \(*(k_{1},\dots,k_{n})T\).
\end{proof}

	\subsection{S-coalescence}
	
	\begin{definition}
Let \(S_{1}, \dots, S_{k}\) be \(k\) S-trees. Let \(v_{i} \in \Supp{}{S_{i}}\), for each \(i \in [k]\). The \textbf{S-coalescence} of \((S_{1},v_{1}), \dots, (S_{k},v_{k})\), denoted by
\[
\displaystyle\bigoasterisk_{i=1}^{k} (S_{i},v_{i})
\]
is the tree obtained by identifying each supported vertex \(v_{i}\) with a new vertex \(v^{*}\). Let \(N_{S_{i}}(v_{i})\) be the neighborhood of \(v_{i}\) in \(S_{i}\), then \(\bigoasterisk_{i=1}^{k}(S_{i},v_{i})\) is the tree with set of vertices 
\[
V\left(\bigoasterisk_{i=1}^{k}(S_{i},v_{i})\right) = \left( \bigcup_{1 \leq i \leq k} (V(S_{i}) \setminus \{v_{i}\}) \right) \cup \{v^{*}\}
\]
and set of edges
\[
E\left(\bigoasterisk_{i=1}^{k}(S_{i},v_{i})\right) = \{ \{u,v^{*}\} : u \in N_{S_{i}}(v_{i})\} \cup \bigcup_{i=1}^{k} E(S_{i}) \setminus \{\{u,v_{i}\} :  u \in N_{T_{i}}(v_{i})\}
\]
\end{definition}


\tikzstyle{every node}=[circle, draw, fill=white!,
inner sep=0.1pt, minimum width=11pt]

\begin{figure}[h]
	\centering
	\begin{subfigure}[c]{0.3\textwidth}
		\begin{tikzpicture}[thick,scale=0.2]%
		
		\draw 
		(0,3) node[fill=gray]{$v_1$}
		(0,8) node{} -- (0,3)
		(0,13) node{} -- (0,8);
		
		\draw[thick,dashed] (0,0) circle (4.5cm);
		
		\draw 
		(2.1213,-2.1213) node[fill=gray]{$v_2$}
		(5.6568,1.4144) node{} -- (2.1213,-2.1213)
		(9.1924,4.95) node{} -- (5.6568,1.4144)
		(5.6568,-5.6568) node{} -- (2.1213,-2.1213)
		(9.1924,-9.1924) node{} -- (5.6568,-5.6568)
		(9.1924,-2.1213) node{} -- (5.6568,-5.6568)
		(2.1213,-9.1924) node{} -- (5.6568,-5.6568)
		(12.7280,-5.6568) node{} -- (9.1924,-9.1924)
		(5.6568,-12.7280) node{} -- (9.1924,-9.1924);
		
		\draw 
		(-2.1213,-2.1213) node[fill=gray]{$v_3$}
		(-5.6568,-5.6568) node{} -- (-2.1213,-2.1213)
		(-9.1924,-9.1924) node{} -- (-5.6568,-5.6568)
		(-9.1924,-2.1213) node{} -- (-5.6568,-5.6568);
		
		\end{tikzpicture}
		\caption{Three S-trees $S_1$,$S_2$ and $S_3$}
	\end{subfigure}
	\hspace{4em}
	\begin{subfigure}[c]{0.3\textwidth}
		\begin{tikzpicture}[thick,scale=0.2]%
		\draw 
		(0,0) node[fill=gray]{$v^{*}$}
		(0,5) node{} -- (24-24,0)
		(0,10) node{} -- (24-24,5)
		(3.5355,3.5355) node{} -- (0,0)
		(31.0711-24,2*3.5355) node{} --	(3.5355,3.5355)
		(27.5355-24,-3.5355) node{} -- (24-24,0)
		(31.0711-24,-7.0711) node{} -- (27.5355-24,-3.5355)
		(31.0711-24,0) node{} -- (27.5355-24,-3.5355)
		(24-24,-7.0711) node{} -- (27.5355-24,-3.5355)
		(34.6066-24,-3.5355) node{} -- (31.0711-24,-7.0711)
		(27.5355-24,-10.6066) node{} -- (31.0711-24,-7.0711)
		
		(20.4645-24,-3.5355) node{} -- (24-24,0)
		(16.9289-24,0) node{} -- (20.4645-24,-3.5355)
		(16.9289-24,-7.0711) node{} -- (20.4645-24,-3.5355);
		
		\end{tikzpicture}
		\vspace{.5em}
		\caption{$\bigoasterisk\limits_{i=1}^{3}(S_i,v_i)$}
	\end{subfigure}

	~ 
	\caption{coalescence}
\end{figure}
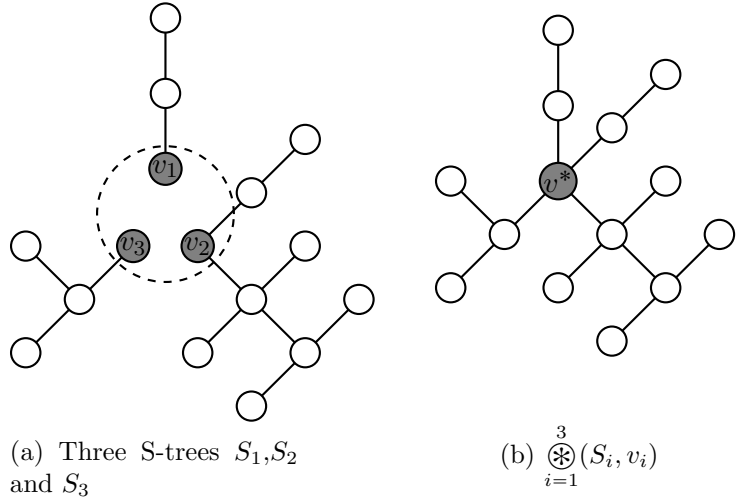



The following theorem prove that S-trees are closed under S-coalescence.
\begin{theorem}\label{TScoalescence}
Let \(S_{1}, \dots, S_{k}\) be \(k\) S-trees, and \(v_{i} \in \Supp{}{S_{i}}\). Then \(\bigoasterisk_{i=1}^{k} (S_{i},v_{i})\) is an S-tree.
\end{theorem}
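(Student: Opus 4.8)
The plan is to verify the two defining requirements of an S-tree for $T := \bigoasterisk_{i=1}^{k}(S_i,v_i)$: that $T$ is a tree, and that $N_T[\Supp{}{T}] = V(T)$. The first is immediate from the construction. Each $S_i$ is a tree, every edge of each $S_i$ survives (the edges at $v_i$ are merely re-attached to $v^{*}$), and the pieces are joined only through the single identified vertex $v^{*}$; a count then gives $e(T) = \sum_i e(S_i) = \sum_i (v(S_i)-1) = v(T)-1$, and $T$ is connected, so $T$ is a tree. The real content is to locate enough of $\Supp{}{T}$ to dominate $V(T)$.

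The key step is a gluing construction for null vectors. By Lemma \ref{vecsupp}, for each $i$ I would choose a null vector $x^{(i)}$ of $S_i$ with $\Supp{\mathbb{R}^{S_i}}{x^{(i)}} = \Supp{}{S_i}$; since $v_i \in \Supp{}{S_i}$ we have $x^{(i)}_{v_i} \neq 0$, so after rescaling we may assume $x^{(i)}_{v_i} = 1$ for every $i$. Define $y \in \mathbb{R}^{T}$ by $y_{v^{*}} = 1$ and $y_w = x^{(i)}_w$ for $w \in V(S_i)\setminus\{v_i\}$. The claim is $A(T)y = \theta$. For a vertex of some $S_i$ not adjacent to $v_i$, its neighbourhood in $T$ equals its neighbourhood in $S_i$, so its equation is inherited from $A(S_i)x^{(i)} = \theta$. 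For a neighbour $u$ of $v_i$ in $S_i$, the edge $\{u,v_i\}$ becomes $\{u,v^{*}\}$ and $y_{v^{*}} = 1 = x^{(i)}_{v_i}$, so the equation again coincides with the corresponding equation of $A(S_i)x^{(i)}$. The only genuinely new equation is the one at $v^{*}$, whose neighbours are $\bigcup_i N_{S_i}(v_i)$; there the left-hand side is $\sum_i \sum_{u \in N_{S_i}(v_i)} x^{(i)}_u$, and each inner sum is exactly the $v_i$-coordinate of $A(S_i)x^{(i)} = \theta$, hence $0$.

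I expect this verification at $v^{*}$ to be the crux: the point is that identifying the $v_i$ is harmless precisely because the equation read off at each $v_i$ in its own tree is a null equation, so the contributions of the different branches are each separately zero and do not interact. Granting the claim, $y$ is a null vector of $T$ with $y_{v^{*}} = 1 \neq 0$ and with $y_u = x^{(i)}_u \neq 0$ for every $u \in \Supp{}{S_i}\setminus\{v_i\}$; varying $i$ and $u$ yields
\[
\{v^{*}\} \cup \bigcup_{i=1}^{k}\bigl(\Supp{}{S_i}\setminus\{v_i\}\bigr) \subseteq \Supp{}{T}.
\]

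It then remains to show $N_T[\Supp{}{T}] = V(T)$, for which the inclusion just established already suffices. Let $w \in V(T)$. If $w = v^{*}$ it lies in $\Supp{}{T}$. Otherwise $w \in V(S_i)\setminus\{v_i\}$ for a unique $i$; since $S_i$ is an S-tree, $w \in N_{S_i}[\Supp{}{S_i}]$, so either $w \in \Supp{}{S_i}$, whence $w \in \Supp{}{S_i}\setminus\{v_i\} \subseteq \Supp{}{T}$, or $w$ has a neighbour $u \in \Supp{}{S_i}$ in $S_i$. In the latter case, if $u \neq v_i$ then $u \in \Supp{}{T}$ and the edge $\{w,u\}$ survives in $T$, while if $u = v_i$ then $w$ becomes a neighbour of $v^{*}$ in $T$ and $v^{*} \in \Supp{}{T}$; either way $w \in N_T[\Supp{}{T}]$. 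Hence $N_T[\Supp{}{T}] = V(T)$ and $T$ is an S-tree. Note that the hypothesis $v_i \in \Supp{}{S_i}$ is used twice and essentially: to normalise $x^{(i)}$ so that $v^{*}$ lands in $\Supp{}{T}$, and to cover those neighbours of $v_i$ that were dominated in $S_i$ only through $v_i$ itself.
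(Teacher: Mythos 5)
Your proposal is correct and follows essentially the same route as the paper's own proof: the same appeal to Lemma \ref{vecsupp} to get full-support null vectors normalised at $v_i$, the same glued vector $y$ with $y_{v^{*}}=1$, and the same observation that the equation at $v^{*}$ splits into the separate null equations $(A(S_i)x^{(i)})_{v_i}=0$. The only difference is that you spell out two steps the paper leaves implicit — that the coalescence is a tree, and the final domination argument showing $N_T[\Supp{}{T}]=V(T)$ — which is a welcome completion rather than a departure.
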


\begin{proof}
Let \(T= \bigoasterisk_{i=1}^{k} (S_{i},v_{i}) \). By Lemma \ref{vecsupp}, for every \(i \in [k]\) there exists a vector \(x(i) \in \N{S_{i}}\) such that \(\Supp{S_{i}}{x(i)}=\Supp{}{S_{i}}\), and without loss of generality \(x(i)_{v_{i}}=1\). Now we define the vector \(y \in \mathbb{R}^{T}\) as:
\[
\down{y}{T}{$S_{i}-v_{i}$}=\down{x(i)}{$S_{i}$}{$S_{i}-v_{i}$}
\]
for each \(i \in [k]\), and \(y_{v^{*}}=1\), where \(v^{*}\) is the new vertex, which substitutes vertices \(v_{1}, \dots, v_{k}\). Let \(j \in V(S_{i}-v_{i})\). Then
\[
(A(T)y)_{j}=(A(S_{i})x(i))_{j}= 0
\]
and clearly,
\[
(A(T)y)_{v^{*}}=\sum_{i=1}^{k}(A(S_{i})x(i))_{v_{i}}=0
\]
Hence
\[
 \{v^{*}\} \cup \left( \bigcup_{i=1}^{k} \Supp{}{S_{i}}\setminus \{v_{i}\}\right)   \subset \Supp{}{T}
\]
Thus
\[
N[\Supp{}{T}]=V(T)
\]
This prove that \(T=\bigoasterisk_{i=1}^{k} (S_{i},v_{i})\) is an S-tree.
\end{proof}

\begin{corollary}
	Let \(S_{1}, \dots, S_{k}\) be \(k\) S-trees, and \(v_{i} \in \Supp{}{S_{i}}\) for \(i \in [k]\). Then
	\begin{enumerate}
		\item \(\Core{\bigoasterisk_{i=1}^{k} (S_{i},v_{i})}=\bigcup_{i=1}^{k}\Core{S_{i}}\).
		\item \(\core{\bigoasterisk_{i=1}^{k} (S_{i},v_{i})}= \sum_{i=1}^{k} \core{S_{i}}\).
		\item \(\Supp{}{\bigoasterisk_{i=1}^{k} (S_{i},v_{i})}= \{v^{*}\} \bigcup_{i=1}^{k}\left(\Supp{}{S_{i}} \setminus \{v_{i}\} \right)\).
		\item \(\supp{}{\bigoasterisk_{i=1}^{k} (S_{i},v_{i})}=1-k+\sum_{i=1}^{k} \supp{}{S_{i}}\).
		\item \(\rank{\bigoasterisk_{i=1}^{k} (S_{i},v_{i})}= \sum_{i=1}^{k} \rank{S_{i}}  \).
		\item \(\nulidad{\bigoasterisk_{i=1}^{k} (S_{i},v_{i})}=1-k+\sum_{i=1}^{k} \nulidad{S_{i}}\).
		\item \(\nu(\bigoasterisk_{i=1}^{k} (S_{i},v_{i}))= \sum_{i=1}^{k} \nu(S_{i}) \).
		\item \(m(\bigoasterisk_{i=1}^{k} (S_{i},v_{i})) < \prod_{i=1}^{k} m(S_{i})\).
		\item \(\alpha(\bigoasterisk_{i=1}^{k} (S_{i},v_{i}))=1-k+\sum_{i=1}^{k} \alpha(S_{i})\).
	\end{enumerate}
\end{corollary}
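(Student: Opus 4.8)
The plan is to treat the nine items not as independent computations but as a cascade: first I would pin down the matching number, derive the linear-algebraic invariants from it, then fix the vertex sets by a cardinality argument, and finally handle the count of maximum matchings as the one genuinely combinatorial item. Throughout, write \(T=\bigoasterisk_{i=1}^{k}(S_i,v_i)\); by Theorem \ref{TScoalescence} it is an S-tree, and since each \(v_i\) lies in the independent set \(\Supp{}{S_i}\), every edge of \(S_i\) at \(v_i\) runs to a core vertex, so in \(T\) every neighbor of \(v^*\) is a core vertex of some \(S_i\).

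For item 7 I would prove \(\nu(T)=\sum_i\nu(S_i)\) by two inequalities. For ``\(\geq\)'', Lemma \ref{SuppNSaturated} supplies for each \(i\) a maximum matching of \(S_i\) leaving \(v_i\) unsaturated; these live in the vertex-disjoint pieces \(S_i-v_i\) of \(T\), so their union is a matching of size \(\sum_i\nu(S_i)\). For ``\(\leq\)'', any matching of \(T\) uses at most one edge at \(v^*\); assigning that edge (if present) to the tree \(S_{i_0}\) from which it came and reading \(v^*\) back as \(v_{i_0}\), the matching splits into matchings of the individual \(S_i\), each of size at most \(\nu(S_i)\). Item 5 is then immediate from \(\rank{T}=2\nu(T)\), and item 6 follows from \(\nulidad{T}=v(T)-\rank{T}\) together with \(v(T)=1-k+\sum_i v(S_i)\).

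For the vertex sets, recall that in any S-tree one has \(V=\Supp{}{\cdot}\sqcup\Core{\cdot}\), since the support is independent and hence disjoint from its own neighborhood. The proof of Theorem \ref{TScoalescence} already gives \(\{v^*\}\cup\bigcup_i(\Supp{}{S_i}\setminus\{v_i\})\subseteq\Supp{}{T}\); taking neighborhoods yields \(\bigcup_i\Core{S_i}\subseteq\Core{T}\), and these cores are pairwise disjoint, as each \(\Core{S_i}\) sits in the distinct piece \(V(S_i)\setminus\{v_i\}\). Since the rank of an S-tree is twice its core size (Theorem \ref{MatchingMaximumStrees} and \(\rank=2\nu\)), item 5 forces \(\core{T}=\sum_i\core{S_i}\), which is item 2; comparing cardinalities then upgrades the inclusion to the equality \(\Core{T}=\bigcup_i\Core{S_i}\) (item 1), and item 3 is its complement in \(V(T)\). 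Items 4 and 9 are bookkeeping: item 4 counts the explicit set of item 3, and item 9 uses \(\alpha(T)=\supp{}{T}\) from Theorem \ref{S-tree-Independent}.

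The main obstacle is item 8. I would classify the maximum matchings of \(T\) by whether they cover \(v^*\). Writing \(a_i\) (resp.\ \(b_i\)) for the number of maximum matchings of \(S_i\) leaving \(v_i\) unsaturated (resp.\ saturated), the analysis behind item 7 shows that a maximum matching of \(T\) either avoids \(v^*\)—choosing one \(v_i\)-avoiding maximum matching in each \(S_i\)—or uses a single edge at \(v^*\) coming from exactly one \(S_{i_0}\), whose piece is then a \(v_{i_0}\)-saturating maximum matching. This gives \(m(T)=\prod_i a_i+\sum_{i_0}b_{i_0}\prod_{i\neq i_0}a_i\), exactly the terms of \(\prod_i(a_i+b_i)=\prod_i m(S_i)\) indexed by subsets of size at most one, so \(\prod_i m(S_i)-m(T)=\sum_{|J|\geq 2}\prod_{i\in J}b_i\prod_{i\notin J}a_i\geq 0\). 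To make this strict (for \(k\geq 2\)) I would verify \(a_i\geq 1\) and \(b_i\geq 1\) for all \(i\): the former is Lemma \ref{SuppNSaturated}, while the latter needs a short swap argument—starting from a maximum matching missing \(v_i\), its core neighbor is matched to a different support vertex (cores are always saturated by Theorem \ref{MatchingMaximumStrees}), and exchanging that edge for the one to \(v_i\) yields a maximum matching saturating \(v_i\). I expect this swap lemma, that every support vertex of an S-tree is saturated by some maximum matching, to be the only ingredient not already packaged in the cited results.
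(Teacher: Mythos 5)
Your proof is correct, but it runs the paper's logic essentially in reverse, and the difference is substantive. The paper's pivot is item 3: it identifies \(\Supp{}{\bigoasterisk_{i=1}^{k}(S_i,v_i)}\) exactly by a null-space computation (given \(y\) in the null space with \(y_{v^*}=1\), the vector \(\up{(\down{y}{$T$}{$S_i-v_i$})}{$S_i$}{$S_i-v_i$}+e_{v_i}\) is shown to lie in \(\N{S_i}\), giving the hard inclusion), and then items 1, 2, 4 fall out set-theoretically, item 5 from Theorem \ref{Trknulloftrees}, items 6, 7, 9 from those, and item 8 from an injection of \(\mathcal{M}(\bigoasterisk_{i=1}^{k}(S_i,v_i))\) into \(\prod_i \mathcal{M}(S_i)\) that misses any tuple saturating every \(v_i\). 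You instead make item 7 the pivot, proved purely combinatorially via two inequalities from Lemma \ref{SuppNSaturated}, deduce the linear-algebra items 5 and 6 from \(\rank{T}=2\nu(T)\), and then recover the vertex-set items 1--3 by a cardinality squeeze (inclusion from the proof of Theorem \ref{TScoalescence} plus \(\core{T}=\sum_i\core{S_i}\)), thereby avoiding the paper's second null-space computation entirely. Your treatment of item 8 is also sharper: where the paper writes ``Let \(M(i)\in\mathcal{M}(S_i)\) such that \(v_i\in V(M(i))\)'' without justifying that such a matching exists, your swap argument (every core vertex is saturated by every maximum matching, so exchange the edge at the core neighbor of \(v_i\)) supplies exactly this missing existence claim, and your expansion \(\prod_i m(S_i)-m(T)=\sum_{|J|\geq 2}\prod_{i\in J}b_i\prod_{i\notin J}a_i\) quantifies the deficit rather than merely exhibiting one unreached tuple. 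Both proofs share the same unstated degeneracy caveat --- the strict inequality in item 8 fails for \(k=1\) or when some \(S_i\) is the one-vertex tree (where no \(v_i\)-saturating matching exists) --- but you at least flag the \(k\geq 2\) requirement explicitly, which the paper does not.
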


\begin{proof}
Clearly 1 and 4 follow from 3, and 2 follows from 1. On another hand, 5 follows from 2 and Theorem \ref{Trknulloftrees}. The statement 6 follows from 2, 4, and 5. The stament 7 follows from Theorem \ref{MatchingMaximumStrees} and 2. Finally 9 follows from Corollary \ref{nualpha_coresupp} and 4.
	
	In order to prove 3 let \(y \in \Supp{}{\bigoasterisk_{i=1}^{k} (S_{i},v_{i})}\), without loss of generality we assume that \(y_{v^{*}}=1\). As
	\[
	A(S_{i})\left( \up{(\down{y}{$\bigoasterisk_{i=1}^{k} (S_{i},v_{i})$}{$S_{i}-v_{i}$})}{$S_{i}$}{$S_{i}-v_{i}$} + e_{v_{i}} \right) =\theta
	\]
	where \(e_{v_{i}}\) is a canonical vector of \(\mathbb{R}^{S_{i}}\). Then
	\[
	 \Supp{}{\bigoasterisk_{i=1}^{k} (S_{i},v_{i})} \subset \{v^{*}\} \cup \left( \bigcup_{i=1}^{k} \Supp{}{S_{i}}\setminus \{v_{i}\}\right)   
	\]
	and hence, by the proof of Theorem \ref{TScoalescence}, 3 follows.
	
	To prove 8, just note that there is an injection between the maximum matchings of \(\bigoasterisk_{i=1}^{k} (S_{i},v_{i})\) and \(\prod_{i=1}^{k} M(S_{i}) \). Let \(M \in \mathcal{M}(\bigoasterisk_{i=1}^{k} (S_{i},v_{i}))\), let \(u_{i} \in V(S_{i})\) such that \(\{u_{i},v^{*}\}\in M\), then
	\[
	M-\{u_{i},v^{*}\}+\{u_{i},v_{i}\} \in \prod_{i=1}^{k} M(S_{i})
	\]
	But this injection is no onto. Let \(M(i) \in \mathcal{M}(S_{i})\) such that \(v_{i} \in V(M(i))\). No \(M \in \mathcal{M}(\bigoasterisk_{i=1}^{k} (S_{i},v_{i}))\) can reach \(\prod_{i=1}^{k} M(i)\).
\end{proof}
Let \(T\) be a tree, and \(u,v \in V(T)\), here and subsequently, \(T(u \rightarrow v)\) stands for the following subtree  of \(T\):
\[
T(u \rightarrow v) := T\left\langle \{ x \in V(T): v \in V(uP_{T}x) \} \right\rangle 
\]

%
\tikzstyle{every node}=[circle, draw, fill=white!,
inner sep=0.1pt, minimum width=11pt]

\begin{figure}[h]
	\centering
	\begin{tikzpicture}[thick,scale=0.2]%
	
	\draw
	(0,0) node{}
	(0,5) node[fill=gray]{$u$} -- (0,0)
	(0,10) node{} -- (0,5)
	(-5,10) node{} -- (0,10)
	(0,15) node{} -- (0,10)
	(-5,15) node{} -- (0,15)
	(0,20) node{} -- (0,15)
	(-5,20) node{} -- (0,20)
	(5,15) node{} -- (0,15)
	(10,15) node[fill=gray]{$v$} -- (5,15)
	(15,15) node{} -- (10,15)
	(20,15) node{} -- (15,15)
	(10,10) node{} -- (10,15)
	(15,10) node{} -- (15,15)
	(15,5) node{} -- (15,10);
	
	\draw [dotted] (10,17) -- (20,17);
	\draw [dotted] (8,15) -- (8,10);
	\draw [dotted] (17,11.2) -- (17,5);
	\draw [dotted] (20,13) -- (18.8,13);
	\draw [dotted] (10,8) -- (10.8,8);
	\draw [dotted] (13,5) -- (13,5.8);

	\draw [dotted] (20,13) arc (-90:90:2);
	\draw [dotted] (10,17) arc (90:180:2);
	\draw [dotted] (13,5) arc (180:360:2);
	\draw [dotted] (17,11) arc (180:90:2);
	\draw [dotted] (8,10) arc (180:270:2);
	\draw [dotted] (13,6) arc (0:90:2);
	
	\draw (15,22) node[white]{\textcolor{black}{$T(u \to v)$}};
	
	\end{tikzpicture}
	\caption{$T$ and $T(u \to v)$.}
\end{figure}
%


The set of all supported vertices with degree greater than 1 carries structural information about trees. 
\begin{definition}
Let \(T\) be a tree. Then
\[
\ISupp{T}:=\{v \in \Supp{}{T}\; :\; \deg(v)>1\}
\]
\end{definition}

\begin{theorem}
Let \(S\) be a S-tree such that \(\ISupp{S} \neq \emptyset \). Then \(S\) is an S-coalescence of S-trees.
\end{theorem}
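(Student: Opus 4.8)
The plan is to reverse the S-coalescence construction at a supported vertex of degree at least two. Fix $v^{*}\in\ISupp{S}$, so $v^{*}$ is supported with $\deg(v^{*})=k\geq 2$; write $N(v^{*})=\{w_{1},\dots,w_{k}\}$. As $S$ is a tree, $S-v^{*}$ splits into exactly $k$ components $T_{1},\dots,T_{k}$, labelled so that $w_{i}\in V(T_{i})$. For each $i$ let $S_{i}$ be the tree obtained from $T_{i}$ by attaching a single new pendant vertex $v_{i}$ to $w_{i}$. Comparing vertex and edge sets shows directly that $\bigoasterisk_{i=1}^{k}(S_{i},v_{i})=S$: the coalescence re-identifies the copies $v_{1},\dots,v_{k}$ into one vertex whose neighbours are exactly $w_{1},\dots,w_{k}$, while the edges internal to each $T_{i}$ are left untouched. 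Since $k\geq 2$, every $S_{i}$ is a proper subtree of $S$, so the decomposition is nontrivial. It therefore remains to show that each $S_{i}$ is an S-tree and that $v_{i}\in\Supp{}{S_{i}}$; once this is done, the displayed equality exhibits $S$ as an S-coalescence of S-trees.

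The algebraic engine is a single full-support null vector. By Lemma \ref{vecsupp} pick $x\in\N{S}$ with $\Supp{S}{x}=\Supp{}{S}$, scaled so that $x_{v^{*}}=1$ (legitimate since $v^{*}\in\Supp{}{S}$). The decisive observation is that $x_{w_{i}}=0$ for all $i$: indeed $w_{i}\in N(\Supp{}{S})=\Core{S}$, and by Theorem \ref{S-tree-Independent} the set $\Supp{}{S}$ is independent, so $\Core{S}\cap\Supp{}{S}=\emptyset$ and $w_{i}\notin\Supp{}{S}$. Define $y^{(i)}\in\mathbb{R}^{S_{i}}$ by $y^{(i)}_{v_{i}}=1$ and $y^{(i)}_{u}=x_{u}$ for $u\in V(T_{i})$. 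I would verify $A(S_{i})y^{(i)}=\theta$ row by row: for $u\in V(T_{i})$ the neighbourhood of $u$ in $S_{i}$ coincides with its neighbourhood in $S$ once $v^{*}$ is replaced by $v_{i}$, and $y^{(i)}_{v_{i}}=x_{v^{*}}=1$, so each such row equals the corresponding row of $A(S)x=\theta$ and vanishes; the one new row, at $v_{i}$, reads $y^{(i)}_{w_{i}}=x_{w_{i}}=0$ and holds precisely by the core observation. Hence $y^{(i)}\in\N{S_{i}}$, and since $y^{(i)}_{v_{i}}=1\neq 0$ we get $v_{i}\in\Supp{}{S_{i}}$ together with $\Supp{}{y^{(i)}}=\bigl(\Supp{}{S}\cap V(T_{i})\bigr)\cup\{v_{i}\}\subseteq\Supp{}{S_{i}}$.

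Finally I would check $N[\Supp{}{S_{i}}]=V(S_{i})$ by a short case analysis on $u\in V(S_{i})=V(T_{i})\cup\{v_{i}\}$. If $u=v_{i}$, or if $u\in\Supp{}{S}\cap V(T_{i})$, then $u\in\Supp{}{S_{i}}$ by the containment above. Otherwise $u\in V(T_{i})\setminus\Supp{}{S}$, and since $S$ is an S-tree we have $u\in N_{S}[\Supp{}{S}]$, so $u$ has an $S$-neighbour $w\in\Supp{}{S}$. As the only edge of $S$ joining $V(T_{i})$ to its complement is $\{w_{i},v^{*}\}$, either $w\in V(T_{i})$, in which case the edge $\{u,w\}$ survives in $S_{i}$ and $w\in\Supp{}{S}\cap V(T_{i})\subseteq\Supp{}{S_{i}}$, or $w=v^{*}$, which forces $u=w_{i}$, adjacent in $S_{i}$ to $v_{i}\in\Supp{}{S_{i}}$. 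In every case $u\in N[\Supp{}{S_{i}}]$, so $S_{i}$ is an S-tree. The step I expect to be the real obstacle is the null-vector verification of the second paragraph, in particular seeing why the boundary row at $v_{i}$ vanishes: this is exactly where the S-tree hypothesis enters, through the identity $\Core{S}\cap\Supp{}{S}=\emptyset$ that gives $x_{w_{i}}=0$ (and which, incidentally, also rules out the degenerate $S_{i}\cong K_{2}$, since the row at $w_{i}$ in $A(S)x=\theta$ forces $w_{i}$ to have a neighbour inside $T_{i}$). The vertex/edge bookkeeping for the coalescence and the closed-neighbourhood case analysis are then routine.
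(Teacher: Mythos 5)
Your proof is correct and follows essentially the same route as the paper: split $S$ at the chosen vertex of $\ISupp{S}$ into the components of $S-v^{*}$, attach a new pendant vertex to each, transfer null vectors of $S$ to null vectors of the pieces (using that the neighbours of $v^{*}$ lie in $\Core{S}$ and hence vanish on the null space), and observe that the S-coalescence of the pieces reconstructs $S$. The only differences are presentational — you work with a single full-support vector from Lemma \ref{vecsupp} and spell out the closed-neighbourhood check and the $\Core{S}\cap\Supp{}{S}=\emptyset$ argument that the paper leaves implicit.
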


\begin{proof}
Let \(v \in \ISupp{S} \). Consider the neighbors of \(v\):  \(N(v)= \lbrace u_{1},u_{2}, \ldots, u_{k} \rbrace \). From  \(T\) we obtain a  forest \(F\) with \(k\) trees. Vertices of \(F\) are 
\[
V(F) :=\lbrace v_{1}^{*},v_{2}^{*}, \ldots, v_{d}^{*} \rbrace \cup V(S) \setminus {v} 
\]
Edges of \(F\) are
\[
E(F) := \{\{u_{i},v_{i}^{*}\}: 1 \leq i \leq k \} \cup \left( E(S) \setminus \{ \{u_{i},v\} : 1 \leq i \leq k \} \right)
\]

Let call \(S(v_{i}^{*})\) to the tree of \(F\) that contains the vertex \(v_{i}^{*}\). Let \(y \in\N{S}\), and \(x\) a vector of \(\mathbb{R}^{S(v_{i}^{*})}\) such that \(\down{x}{$S(v_{i}^{*})$}{$S(v \rightarrow u_{i})$}= \down{y}{S}{$S(v \rightarrow u_{i})$}\), and \(x_{v_{i}^{*}}=y_{v}\). For \(j \in V(S(v_{i}^{*})) \setminus \{v_{i}^{*}\}\) is clear that for each \(i \in [k]\)
\[
\left( A(S(v_{i}^{*}))x\right)_{j}= \left( A(S)y\right)_{j}=0
\]
and that 
\[
\left(A(S(v_{i}^{*}))x\right)_{v_{i}^{*}}=0
\]
Hence \( \{v_{i}^{*}\} \cup \left( \Supp{}{S} \cap V(S(v_{i}^{*}) \right) \subset \Supp{}{S(v_{i}^{*}}) \). Therefore \(N[\Supp{}{S(v_{i}^{*}}] =V(S(v_{i}^{*}))\). Thus \(S(v_{i}^{*})\) is an S-tree. Clearly \(S=\bigoasterisk_{i=1}^{k} (S_{i},v_{i}^{*})\).
\end{proof}

We can apply the former decomposition \(|\ISupp{S}|\) times in order to get the set of stellare trees that form the given S-tree \(S\).



\section{S-atoms}

In \cite{molina_jaume2016tree} it was proved that the number of maximum matchings of a tree \(T\) depends on its S-forest:
\[
m(T)= \prod_{S \in \mathcal{F}_{S}(T)} m(S) 
\]
Now we will give a refinement of this result in terms of S-atoms.
\begin{definition}
	Let \(S\) be a S-tree, the \textbf{A-set} of \(S\), denoted by \(\mathcal{F}_{A}(S)\), is the set of all connected components that remains after taking away all the edges between core vertices. 
\end{definition}

%
\begin{figure}[h]
	\centering
	\begin{subfigure}[h]{0.3\textwidth}
		\begin{tikzpicture}[thick,scale=0.2]%
		
		\draw [ultra thick] (0,5) -- (0,0);
		\draw [ultra thick] (-10,10) -- (-5,10);
		\draw 
		(0,0) node[fill=gray]{}
		(0,5) node[fill=gray]{}
		(-5,0) node{} -- (0,0)
		(0,-5) node{} -- (0,0)
		(5,0) node{} -- (0,0)
		(5,5) node{} -- (0,5)
		(-5,5) node{} -- (0,5)
		(0,10) node{} -- (0,5)
		(-5,10) node[fill=gray]{} -- (0,10)
		(-5,15) node{} -- (-5,10)
		(-10,10) node[fill=gray]{}
		(-10,5) node{} -- (-10,10)
		(-10,15) node{} -- (-10,10);
		
		\end{tikzpicture}
		\caption{$S$}
	\end{subfigure}
	\hspace{4em}
	\begin{subfigure}[h]{0.3\textwidth}
		
		\begin{subfigure}[h]{0.3\textwidth}
			\begin{tikzpicture}[thick,scale=0.2]%
			\draw 
			(0,0) node[fill=gray]{}
			(0,5) node[fill=gray]{} 
			(-5,0) node{} -- (0,0)
			(0,-5) node{} -- (0,0)
			(5,0) node{} -- (0,0)
			(5,5) node{} -- (0,5)
			(-5,5) node{} -- (0,5)
			(0,10) node{} -- (0,5)
			(-5,10) node[fill=gray]{} -- (0,10)
			(-5,15) node{} -- (-5,10)
			(-10,10) node[fill=gray]{}
			(-10,5) node{} -- (-10,10)
			(-10,15) node{} -- (-10,10);
			
			\end{tikzpicture}
		\end{subfigure}
		\caption{$\mathcal{F}_A(S)$}
	\end{subfigure}
	~ 
	\caption{An S-tree $S$ and its  A-set $\mathcal{F}_A(S)$.}
\end{figure}
We usually think \(\mathcal{F}_A(S)\) as a forest, so \(V(\mathcal{F}_A(S))\) is the set of vertices of \(\mathcal{F}_A(S)\) looked as forest, etc. Even we say sometimes A-forest instead of A-set.

%
\begin{theorem} \label{ThS-atoms1}
	Let \(S\) be a S-tree. The A-set of \(S\) is a set of S-trees. Let \(\mathfrak{A} \in \mathcal{F}_{A}(S)\), then \(\Core{\mathfrak{A}}=\Core{S} \cap V(\mathfrak{A})\), and \(\Supp{}{\mathfrak{A}}=\Supp{}{S} \cap V(\mathfrak{A})\).
\end{theorem}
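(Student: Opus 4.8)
The plan is to exploit the partition of an S-tree into its support and its core. Recall that for an S-tree \(S\) one has \(V(S)=\Supp{}{S}\cup\Core{S}\) with the two sets disjoint: indeed \(\Supp{}{S}\) is independent (Theorem \ref{S-tree-Independent}), so it cannot meet \(\Core{S}=N(\Supp{}{S})\), while \(V(S)=N[\Supp{}{S}]\) is their union. Moreover every \(y\in\N{S}\) vanishes on \(\Core{S}\), since by definition \(\Supp{S}{y}\subseteq\Supp{}{S}\). Consequently each edge of \(S\) is either a support--core edge or a core--core edge, and \(\mathcal{F}_{A}(S)\) is obtained by deleting exactly the core--core edges. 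I would first record the local effect of this deletion on a component \(\mathfrak{A}\): a support vertex retains all of its (necessarily core) neighbours, a core vertex retains exactly its support--neighbours, and every core vertex---lying in \(N(\Supp{}{S})\)---retains at least one support neighbour inside \(\mathfrak{A}\). Write \(P:=\Supp{}{S}\cap V(\mathfrak{A})\) and \(Q:=\Core{S}\cap V(\mathfrak{A})\); these partition \(V(\mathfrak{A})\).

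Next I would prove \(P\subseteq\Supp{}{\mathfrak{A}}\) by restricting null vectors. For \(y\in\N{S}\) I claim \(\down{y}{$S$}{$\mathfrak{A}$}\in\N{\mathfrak{A}}\): at \(j\in P\) every \(S\)-neighbour survives, so \((A(\mathfrak{A})\down{y}{$S$}{$\mathfrak{A}$})_{j}=(A(S)y)_{j}=0\); at \(j\in Q\) only the support--neighbours survive, yet the deleted core--neighbours contributed \(0\) to \((A(S)y)_{j}\) because \(y\) is zero on \(\Core{S}\), so the surviving partial sum is again \(0\). Feeding in the full-support null vector of Lemma \ref{vecsupp} (with \(\Supp{S}{y}=\Supp{}{S}\)), its restriction lies in \(\N{\mathfrak{A}}\) and has support exactly \(P\), giving \(P\subseteq\Supp{}{\mathfrak{A}}\). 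Because each vertex of \(Q\) has a neighbour in \(P\) inside \(\mathfrak{A}\), this already yields \(N_{\mathfrak{A}}[\Supp{}{\mathfrak{A}}]\supseteq N_{\mathfrak{A}}[P]=V(\mathfrak{A})\), so \(\mathfrak{A}\) is an S-tree.

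The crux is the \emph{reverse} inclusion \(\Supp{}{\mathfrak{A}}\subseteq P\), i.e.\ that no vertex of \(Q\) becomes supported; the restriction map cannot detect this, so I would pass to the matching/independence invariants. Every surviving edge is support--core, so \(Q\) is a vertex cover of \(\mathfrak{A}\) and K\"onig--Egerv\'ary gives \(\nu(\mathfrak{A})\le|Q|\). For the reverse bound take \(M\in\mathcal{M}(S)\): by Theorem \ref{MatchingMaximumStrees} each edge \(e\in M\) satisfies \(|e\cap\Core{S}|=1\), hence is support--core and survives in \(\mathcal{F}_{A}(S)\), and since \(\nu(S)=\core{S}\) the matching \(M\) saturates all of \(\Core{S}\). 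Its restriction to \(\mathfrak{A}\) is therefore a matching saturating all of \(Q\), so \(\nu(\mathfrak{A})\ge|Q|\); thus \(\nu(\mathfrak{A})=|Q|\). The bipartite identity \(\alpha(\mathfrak{A})=v(\mathfrak{A})-\nu(\mathfrak{A})\) now gives \(\alpha(\mathfrak{A})=|P|\), and since \(\mathfrak{A}\) is an S-tree, Theorem \ref{S-tree-Independent} yields \(\supp{}{\mathfrak{A}}=\alpha(\mathfrak{A})=|P|\). Combined with \(P\subseteq\Supp{}{\mathfrak{A}}\) this forces \(\Supp{}{\mathfrak{A}}=P=\Supp{}{S}\cap V(\mathfrak{A})\), and then \(\Core{\mathfrak{A}}=N_{\mathfrak{A}}(\Supp{}{\mathfrak{A}})=N_{\mathfrak{A}}(P)=Q=\Core{S}\cap V(\mathfrak{A})\), the last equality because each vertex of \(Q\) has a support neighbour in \(P\).

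I expect the genuine difficulty to be precisely this reverse inclusion: showing that the pieces acquire no new support. The linear-algebraic restriction is one-directional and gives only \(P\subseteq\Supp{}{\mathfrak{A}}\), so the argument must switch to the combinatorial side and invoke the coincidences \(\nu(S)=\core{S}\) and \(\supp{}{S}=\alpha(S)\) that hold on S-trees, together with the fact that a maximum matching of an S-tree uses only support--core edges and saturates the whole core. Minor care is needed for a trivial one-vertex component (where \(Q=\emptyset\) and every assertion holds vacuously), but this does not affect the main line of reasoning.
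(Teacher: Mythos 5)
Your proof is correct, and its first half is precisely the paper's own proof: restrict the full-support null vector of Lemma \ref{vecsupp} to \(\mathfrak{A}\) (the deleted core--core edges are harmless because null vectors vanish on \(\Core{S}\)), which gives \(\Supp{}{S}\cap V(\mathfrak{A})\subset\Supp{}{\mathfrak{A}}\), and then observe that every vertex of \(\Core{S}\cap V(\mathfrak{A})\) keeps a support neighbour, so \(N[\Supp{}{S}\cap V(\mathfrak{A})]=V(\mathfrak{A})\) and \(\mathfrak{A}\) is an S-tree. The difference is that the paper's proof \emph{stops there}: it never establishes the reverse inclusion \(\Supp{}{\mathfrak{A}}\subset\Supp{}{S}\cap V(\mathfrak{A})\), which is exactly what the stated equalities require, so what you identified as the crux is a genuine gap in the published argument, not a step you merely failed to find. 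Your completion of it is sound and, as far as I can see, necessary: lifting a null vector of \(\mathfrak{A}\) back to \(S\) may fail to annihilate the rows indexed by endpoints of deleted core--core edges, so linear algebra alone is one-directional; and a counting argument in the style of Corollary \ref{CoroS-Atoms1} cannot close the gap either, since the inclusions actually proved yield only \(\sum_{\mathfrak{A}}\supp{}{\mathfrak{A}}\geq\supp{}{S}\) together with \(\sum_{\mathfrak{A}}\core{\mathfrak{A}}\leq\core{S}\), which is compatible with both being strict. Your switch to matchings --- with \(P:=\Supp{}{S}\cap V(\mathfrak{A})\) and \(Q:=\Core{S}\cap V(\mathfrak{A})\), the set \(Q\) is a vertex cover of \(\mathfrak{A}\) because all surviving edges are support--core; a maximum matching of \(S\) consists of support--core edges saturating all of \(\Core{S}\) (Theorem \ref{MatchingMaximumStrees}), hence restricts to a matching of \(\mathfrak{A}\) saturating \(Q\); therefore \(\nu(\mathfrak{A})=|Q|\), and \(\alpha(\mathfrak{A})=v(\mathfrak{A})-\nu(\mathfrak{A})=|P|\) combined with Theorem \ref{S-tree-Independent} forces \(\Supp{}{\mathfrak{A}}=P\) and \(\Core{\mathfrak{A}}=Q\) --- is exactly the kind of combinatorial input needed. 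A marginally shorter finish is available: once \(\mathfrak{A}\) is known to be an S-tree, Theorem \ref{MatchingMaximumStrees} applied to \(\mathfrak{A}\) gives \(\core{\mathfrak{A}}=\nu(\mathfrak{A})=|Q|\), and \(\Core{\mathfrak{A}}\subset Q\) already follows from \(P\subset\Supp{}{\mathfrak{A}}\), so equality holds. In short, your proposal is not a variant of the paper's proof but a completed version of it.
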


\begin{proof}
	Let \(\mathfrak{A} \in \mathcal{F}_{A}(S)\), and \(x  \in \N{S} \) such that \(\Supp{}{S}=\Supp{S}{x}\), see Lemma 7 in \cite{nylen1998null}. As \(x_{u}=0\) for all \(u \in \Core{S} \), it is easy to check that \(A(\mathfrak{A})\down{x}{T}{$\mathfrak{A}$}= \theta\). Hence \(\Supp{}{S} \cap V(\mathfrak{A}) \subset \Supp{}{\mathfrak{A}} \subset V(\mathfrak{A}) \). Let \(v \in \Core{S} \cap \mathfrak{A}\), then there exists \(u \in \Supp{}{S}\) such that \(u \sim v\). By definition of A-forest, \(u \in V(\mathfrak{A})\). Thus \(v \in N[\Supp{}{S} \cap V(\mathfrak{A})]\). Therefore \(V(\mathfrak{A}) \subset N[\Supp{}{S} \cap V(\mathfrak{A})] \). Hence \(N[\Supp{}{S} \cap V(\mathfrak{A})]=V(\mathfrak{A})\).
\end{proof}

\begin{corollary} \label{CoroS-Atoms1}
	Let \(S\) be and S-tree, then \(\Supp{}{S}=\bigcup_{\mathfrak{A} \in \mathcal{F}_{A}(S)} \Supp{}{\mathfrak{A}} \) and \(\Core{S}=\bigcup_{\mathfrak{A} \in \mathcal{F}_{A}(S)} \Core{\mathfrak{A}}  \).
\end{corollary}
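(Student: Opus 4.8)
The plan is to read off both identities directly from Theorem \ref{ThS-atoms1}, using only the fact that passing from $S$ to its A-set $\mathcal{F}_{A}(S)$ deletes edges but never vertices.

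First I would record the key structural observation: since $\mathcal{F}_{A}(S)$ is obtained from $S$ by removing the edges joining pairs of core vertices, no vertex is discarded. Hence the vertex sets of the connected components cover the vertex set of $S$, that is
\[
\bigcup_{\mathfrak{A} \in \mathcal{F}_{A}(S)} V(\mathfrak{A}) = V(S).
\]
Next I would invoke Theorem \ref{ThS-atoms1}, which gives $\Supp{}{\mathfrak{A}}=\Supp{}{S} \cap V(\mathfrak{A})$ for every $\mathfrak{A} \in \mathcal{F}_{A}(S)$. Taking the union over all atoms and distributing the intersection over the union, I obtain
\[
\bigcup_{\mathfrak{A} \in \mathcal{F}_{A}(S)} \Supp{}{\mathfrak{A}} = \Supp{}{S} \cap \left( \bigcup_{\mathfrak{A} \in \mathcal{F}_{A}(S)} V(\mathfrak{A}) \right) = \Supp{}{S} \cap V(S) = \Supp{}{S},
\]
where the last equality holds because $\Supp{}{S} \subset V(S)$. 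The identity for the core is entirely analogous: replacing $\Supp{}{\mathfrak{A}}$ by $\Core{\mathfrak{A}}=\Core{S} \cap V(\mathfrak{A})$ and using $\Core{S} \subset V(S)$ yields $\bigcup_{\mathfrak{A}} \Core{\mathfrak{A}} = \Core{S}$ by the same distributive computation.

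There is essentially no obstacle here: the statement is a formal consequence of Theorem \ref{ThS-atoms1} together with the covering of $V(S)$ by the atom vertex sets. The only point that requires a moment's care is confirming that the $V(\mathfrak{A})$ genuinely cover $V(S)$, which is immediate once one notes that the A-set is defined by edge deletion alone, so that the vertices of $S$ are exactly redistributed among the connected components.
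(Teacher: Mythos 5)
Your proof is correct, but it follows a genuinely different route from the paper's. You read both identities directly off the equalities \(\Supp{}{\mathfrak{A}}=\Supp{}{S}\cap V(\mathfrak{A})\) and \(\Core{\mathfrak{A}}=\Core{S}\cap V(\mathfrak{A})\) of Theorem \ref{ThS-atoms1}: since the A-set arises by deleting edges only, the vertex sets \(V(\mathfrak{A})\) cover \(V(S)\), and distributivity of intersection over union finishes the argument purely at the level of sets. The paper instead runs a double-counting (sandwich) argument on cardinalities: from Theorem \ref{ThS-atoms1} it gets that the pairwise disjoint sets \(\Supp{}{\mathfrak{A}}\) sit inside \(\Supp{}{S}\) and the \(\Core{\mathfrak{A}}\) inside \(\Core{S}\), and it combines this with the partition identities \(v(S)=\supp{}{S}+\core{S}\) and \(v(\mathfrak{A})=\supp{}{\mathfrak{A}}+\core{\mathfrak{A}}\) (valid because \(S\) and every \(\mathfrak{A}\) are S-trees, whose support and core are disjoint and exhaust the vertex set) to obtain
\[
v(S)=\supp{}{S}+\core{S}\geq \sum_{\mathfrak{A}\in\mathcal{F}_{A}(S)}\supp{}{\mathfrak{A}}+\sum_{\mathfrak{A}\in\mathcal{F}_{A}(S)}\core{\mathfrak{A}}=\sum_{\mathfrak{A}\in\mathcal{F}_{A}(S)}v(\mathfrak{A})=v(S),
\]
whence equality holds throughout and the disjoint unions must exhaust \(\Supp{}{S}\) and \(\Core{S}\). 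Your version is shorter and needs strictly less structure: it never invokes that support and core partition an S-tree, only the covering of \(V(S)\) by the atoms, which you correctly single out as the one point requiring care. What the paper's computation buys is the quantitative by-product \(\supp{}{S}=\sum_{\mathfrak{A}}\supp{}{\mathfrak{A}}\) and \(\core{S}=\sum_{\mathfrak{A}}\core{\mathfrak{A}}\), which is the form in which the corollary is actually used later (e.g., for the additivity of rank and nullity over atoms); these identities do also follow from your set equalities together with the disjointness of the \(V(\mathfrak{A})\), so nothing is lost, but your write-up would ideally note that the unions are disjoint.
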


\begin{proof}
	As \(V(S)=\Supp{}{S} \cup \Core{S}\) and these sets are disjoint, by Theorem \ref{ThS-atoms1}
	\begin{align*}
	v(S)  & = \supp{}{S} + \core{S}\\
	{} &  \geq \sum_{\mathfrak{A} \in \mathcal{F}_{A}(S)} \supp{}{\mathfrak{A}} + \sum_{\mathfrak{A} \in \mathcal{F}_{A}(S)} \core{\mathfrak{A}}\\
	{} & = \sum_{\mathfrak{A} \in \mathcal{F}_{A}(S)} v(\mathfrak{A})\\
	{} & = v(S)
	\end{align*}
	from where the corollary follows.
\end{proof}

\begin{definition}
	An S-tree \(\mathfrak{A}\) is an \textbf{S-atom} if \(\mathcal{F}_{A}(\mathfrak{A})=\mathfrak{A}\), i.e., \(\mathfrak{A}\) has no edges between core vertices.
\end{definition}

\begin{corollary}
		Let \(S\) be a S-tree, the A-set of \(S\) is a set of S-atoms.
\end{corollary}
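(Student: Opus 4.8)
The plan is to reduce everything to Theorem~\ref{ThS-atoms1}, which already establishes that every $\mathfrak{A} \in \mathcal{F}_{A}(S)$ is an S-tree and that its core is $\Core{\mathfrak{A}} = \Core{S} \cap V(\mathfrak{A})$. By the definition of S-atom, the only thing left to check is that each such $\mathfrak{A}$ contains no edge joining two of its own core vertices, since that is exactly the condition $\mathcal{F}_{A}(\mathfrak{A}) = \mathfrak{A}$.

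First I would fix an arbitrary component $\mathfrak{A} \in \mathcal{F}_{A}(S)$ and suppose toward a contradiction that there is an edge $\{u,v\} \in E(\mathfrak{A})$ with $u, v \in \Core{\mathfrak{A}}$. The key inclusion to invoke is $\Core{\mathfrak{A}} = \Core{S} \cap V(\mathfrak{A}) \subseteq \Core{S}$, which places both $u$ and $v$ in the core of the ambient S-tree $S$. Then I would recall the construction of the A-set: $\mathcal{F}_{A}(S)$ is obtained from $S$ by deleting precisely those edges whose two endpoints are both core vertices of $S$. Consequently the edge $\{u,v\}$, having both endpoints in $\Core{S}$, would have been removed, so it cannot belong to $E(\mathfrak{A})$ --- a contradiction.

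Hence $\mathfrak{A}$ has no edge between core vertices, so the deletion step defining $\mathcal{F}_{A}(\mathfrak{A})$ removes nothing and leaves $\mathfrak{A}$ intact, giving $\mathcal{F}_{A}(\mathfrak{A}) = \mathfrak{A}$ and therefore that $\mathfrak{A}$ is an S-atom. I do not expect any genuine obstacle here: the entire content is that the core of the piece $\mathfrak{A}$ lies inside the core of $S$ (supplied by Theorem~\ref{ThS-atoms1}), so the single deletion that produces the A-set of $S$ has already eliminated every edge that could spoil the S-atom property of each component $\mathfrak{A}$.
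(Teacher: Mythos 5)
Your proof is correct and follows exactly the route the paper intends: the corollary is an immediate consequence of Theorem~\ref{ThS-atoms1} (each component $\mathfrak{A}$ is an S-tree with $\Core{\mathfrak{A}}=\Core{S}\cap V(\mathfrak{A})\subseteq\Core{S}$), combined with the observation that the construction of $\mathcal{F}_{A}(S)$ has already deleted every edge joining two vertices of $\Core{S}$, so no component can contain an edge between its own core vertices. The paper states this without proof precisely because of this immediacy; you have simply written out the implicit argument.
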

%
%

Now we study some properties of S-atoms. The first one is a direct consequence of the definitions of S-coalescence and S-atom.

\begin{proposition}
	S-atoms are closed under S-coalescence.
\end{proposition}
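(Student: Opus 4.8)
The plan is to combine Theorem \ref{TScoalescence} with the core/support bookkeeping already recorded in the corollary immediately following it, and then to verify the single extra condition that separates an S-atom from a general S-tree: the absence of an edge joining two core vertices. Since Theorem \ref{TScoalescence} already supplies all of the linear-algebraic content, the remaining work is purely combinatorial, which matches the remark that this is a direct consequence of the definitions.

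First I would fix S-atoms \(\mathfrak{A}_{1}, \dots, \mathfrak{A}_{k}\) and supported vertices \(v_{i} \in \Supp{}{\mathfrak{A}_{i}}\), and set \(T = \bigoasterisk_{i=1}^{k}(\mathfrak{A}_{i},v_{i})\). By Theorem \ref{TScoalescence}, \(T\) is already an S-tree, so it only remains to check that \(T\) carries no edge between two of its core vertices. At this point I would invoke the structural data from the corollary after Theorem \ref{TScoalescence}, namely \(\Core{T} = \bigcup_{i=1}^{k}\Core{\mathfrak{A}_{i}}\) and \(\Supp{}{T} = \{v^{*}\} \cup \bigcup_{i=1}^{k}\bigl(\Supp{}{\mathfrak{A}_{i}} \setminus \{v_{i}\}\bigr)\), together with the fact that \(\Supp{}{T}\) and \(\Core{T}\) are disjoint; in particular the new identified vertex \(v^{*}\) is supported, hence never a core vertex.

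Next I would argue edge by edge. Because the atoms are pairwise disjoint except at the identified vertex \(v^{*}\), every edge of \(T\) is either incident to \(v^{*}\) or lies entirely inside a single \(\mathfrak{A}_{i}\). An edge incident to \(v^{*}\) cannot join two core vertices, since one of its endpoints is the supported vertex \(v^{*}\). For an edge \(\{a,b\}\) with \(a,b \neq v^{*}\), both endpoints lie in the same atom \(\mathfrak{A}_{i}\); and if both belonged to \(\Core{T}\), then—since a vertex other than \(v^{*}\) lies in exactly one atom—both would lie in \(\Core{\mathfrak{A}_{i}}\). That would exhibit an edge between two core vertices of the S-atom \(\mathfrak{A}_{i}\), contradicting the defining property of an S-atom. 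Hence \(T\) has no core-core edge, so \(\mathcal{F}_{A}(T)=T\) and \(T\) is an S-atom.

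I do not expect a genuine obstacle. The only subtlety is making precise that a vertex different from \(v^{*}\) inherits its core/support status from the unique atom containing it, which is immediate from the disjointness of the pieces and the identity \(\Core{T} = \bigcup_{i=1}^{k}\Core{\mathfrak{A}_{i}}\). Everything else is already packaged in Theorem \ref{TScoalescence} and its corollary, leaving only the short combinatorial check above.
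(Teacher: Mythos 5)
Your proof is correct, and it matches the paper's intent: the paper offers no argument at all beyond the remark that the proposition is ``a direct consequence of the definitions of S-coalescence and S-atom,'' and your write-up is precisely the fleshed-out version of that remark. Combining Theorem \ref{TScoalescence} with the core/support identities $\Core{T}=\bigcup_{i}\Core{\mathfrak{A}_{i}}$ and $\Supp{}{T}=\{v^{*}\}\cup\bigcup_{i}\bigl(\Supp{}{\mathfrak{A}_{i}}\setminus\{v_{i}\}\bigr)$, and then checking edge by edge (edges at $v^{*}$ have a supported endpoint; any other edge lies in a single atom, where a core--core edge would contradict that atom being an S-atom) is exactly the verification the paper leaves implicit.
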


The next proposition shows that in S-atoms the core and the support form a bipartition.

\begin{proposition}
	Let \(\mathfrak{A}\) be an S-atom. Then \(\mathfrak{A}\) is \((\Core{\mathfrak{A}},\Supp{}{\mathfrak{A}})\)-bipartite.
\end{proposition}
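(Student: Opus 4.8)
The plan is to verify the two defining conditions of a bipartition with parts $\Core{\mathfrak{A}}$ and $\Supp{}{\mathfrak{A}}$: that these two sets partition $V(\mathfrak{A})$, and that every edge of $\mathfrak{A}$ joins a vertex of one part to a vertex of the other.

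First I would establish that $V(\mathfrak{A})$ is the disjoint union of $\Core{\mathfrak{A}}$ and $\Supp{}{\mathfrak{A}}$. Since an S-atom is in particular an S-tree, the defining condition $N[\Supp{}{\mathfrak{A}}]=V(\mathfrak{A})$ holds, and because $\Core{\mathfrak{A}}=N(\Supp{}{\mathfrak{A}})$ this gives $V(\mathfrak{A})=\Supp{}{\mathfrak{A}}\cup\Core{\mathfrak{A}}$. For disjointness, I would invoke Theorem \ref{S-tree-Independent}, by which $\Supp{}{\mathfrak{A}}$ is an independent set; hence no support vertex is adjacent to another support vertex, so no support vertex can lie in $N(\Supp{}{\mathfrak{A}})=\Core{\mathfrak{A}}$. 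Thus the two parts are disjoint and cover $V(\mathfrak{A})$.

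Next I would rule out edges internal to either part. An edge with both endpoints in $\Supp{}{\mathfrak{A}}$ is impossible, again because $\Supp{}{\mathfrak{A}}$ is independent by Theorem \ref{S-tree-Independent}. An edge with both endpoints in $\Core{\mathfrak{A}}$ is impossible by the very definition of an S-atom, which is precisely an S-tree having no edges between core vertices. Combined with the partition established above, this forces every edge of $\mathfrak{A}$ to have exactly one endpoint in $\Core{\mathfrak{A}}$ and one in $\Supp{}{\mathfrak{A}}$, which is the assertion that $\mathfrak{A}$ is $(\Core{\mathfrak{A}},\Supp{}{\mathfrak{A}})$-bipartite.

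There is no genuine obstacle here: the result is an immediate assembly of the independence of the support (Theorem \ref{S-tree-Independent}) with the absence of core--core edges (definition of S-atom). The only step deserving a word of care is the disjointness of core and support in the partition, and this too reduces to the independence of $\Supp{}{\mathfrak{A}}$.
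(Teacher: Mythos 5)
Your proof is correct, and it takes a different route from the paper's. The paper disposes of the proposition by asserting three distance-parity claims (core--core and support--support distances are even, core--support distances are odd) and declaring them clear; since an edge is a pair at distance one, these claims preclude edges inside either part. Your argument instead assembles the result from explicitly cited facts: the partition $V(\mathfrak{A})=\Core{\mathfrak{A}}\sqcup\Supp{}{\mathfrak{A}}$ comes from the S-tree condition $N[\Supp{}{\mathfrak{A}}]=V(\mathfrak{A})$ together with $\Core{\mathfrak{A}}=N(\Supp{}{\mathfrak{A}})$, disjointness and the absence of support--support edges come from the independence of the support (Theorem \ref{S-tree-Independent}), and the absence of core--core edges is exactly the defining property of an S-atom. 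What your approach buys is self-containedness and rigor: the paper's parity claims are themselves unproven (and, for vertices in the same part, essentially presuppose the bipartition being established), whereas every step of yours reduces to a stated theorem or definition. What the paper's approach buys, if the parity claims are granted, is slightly more information than bipartiteness alone, namely the global parity of distances between arbitrary vertex pairs, not just adjacent ones; but for the proposition as stated your direct verification is the cleaner and more defensible proof.
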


\begin{proof} The proposition follows from the next clear statements:
	\begin{enumerate}
		\item If \(u,v \in \Core{\mathfrak{A}}\), then \(d(u,v)\) is even.
		\item If \(u,v \in \Supp{}{\mathfrak{A}}\), then \(d(u,v)\) is even.
		\item If \(u \in \Core{\mathfrak{A}}\) and \(v \in \Supp{}{\mathfrak{A}}\), then \(d(u,v)\) is odd.
	\end{enumerate}
\end{proof}

The maximum degree of all the core vertices in an S-atom \(\mathfrak{A}\) will be denoted by \(\Delta_{\text{core}}(\mathfrak{A})\). It provides a lower bound for the nullity of \(\mathfrak{A}\). 

\begin{theorem}\label{SAtomTcotanulidad}
	Let \(\mathfrak{A}\) be an S-atom. Then \(\nulidad{\mathfrak{A}} \geq \Delta_{\text{core}}(\mathfrak{A})-1\)
\end{theorem}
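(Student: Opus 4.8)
The plan is to produce $\Delta_{\text{core}}(\mathfrak{A}) - 1$ linearly independent null vectors of $\mathfrak{A}$. Let $v \in \Core{\mathfrak{A}}$ be a core vertex of maximum degree, so that $\deg(v) = \Delta_{\text{core}}(\mathfrak{A}) =: d$. By the preceding proposition $\mathfrak{A}$ is $(\Core{\mathfrak{A}},\Supp{}{\mathfrak{A}})$-bipartite, so all $d$ neighbours $u_1,\dots,u_d$ of $v$ lie in $\Supp{}{\mathfrak{A}}$. Since $\mathfrak{A}$ is a tree, deleting $v$ leaves exactly $d$ components; write $T_i := \mathfrak{A}(v \to u_i)$ for the one containing $u_i$. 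First I would invoke Lemma \ref{vecsupp} to fix a null vector $x \in \N{\mathfrak{A}}$ with $\Supp{\mathfrak{A}}{x} = \Supp{}{\mathfrak{A}}$; thus $x_{u_i} \neq 0$ for every $i$, while $x_w = 0$ for every $w \in \Core{\mathfrak{A}}$, in particular $x_v = 0$.

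The key step is to understand the lift of $x$ restricted to a single branch. Set $x^{(i)} := \down{x}{$\mathfrak{A}$}{$T_i$}$ and $\xi_i := \up{x^{(i)}}{$\mathfrak{A}$}{$T_i$}$. I would verify that $A(\mathfrak{A})\xi_i$ vanishes at every vertex except $v$, where $(A(\mathfrak{A})\xi_i)_v = x_{u_i}$. Indeed, the only edge joining $T_i$ to the rest of $\mathfrak{A}$ is $\{u_i,v\}$; hence at any $w \in V(T_i)$ with $w \neq u_i$ the balance equation $(A(\mathfrak{A})\xi_i)_w = (A(\mathfrak{A})x)_w = 0$ holds, and at $u_i$ the same holds because $x_v = 0$ removes the only missing term, so $(A(\mathfrak{A})\xi_i)_{u_i} = (A(\mathfrak{A})x)_{u_i} = 0$. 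Outside $V(T_i) \cup \{v\}$ the vector $\xi_i$ and all relevant neighbours are zero. Only at $v$, whose sole neighbour inside $T_i$ is $u_i$, does a nonzero contribution $x_{u_i}$ survive.

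With this computation the vectors write themselves: for $i = 2,\dots,d$ set
\[
y^{(i)} := \frac{1}{x_{u_1}}\,\xi_1 - \frac{1}{x_{u_i}}\,\xi_i .
\]
Away from $v$ each $y^{(i)}$ is annihilated by $A(\mathfrak{A})$ since $\xi_1,\xi_i$ have disjoint supports $T_1,T_i$, and at $v$ the defects cancel: $\tfrac{1}{x_{u_1}}x_{u_1} - \tfrac{1}{x_{u_i}}x_{u_i} = 0$. Hence $y^{(2)},\dots,y^{(d)} \in \N{\mathfrak{A}}$. For independence I would read off coordinates at the vertices $u_2,\dots,u_d$: the $u_i$-coordinate of $y^{(i)}$ equals $-1$, while the $u_i$-coordinate of $y^{(j)}$ is $0$ for $j \neq i$ (as $y^{(j)}$ is supported on $T_1 \cup T_j$ and $u_i \notin T_1 \cup T_j$). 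Thus any relation $\sum_{i=2}^d c_i y^{(i)} = \theta$ forces every $c_i = 0$, giving $d-1$ independent null vectors and therefore $\nulidad{\mathfrak{A}} \geq d - 1 = \Delta_{\text{core}}(\mathfrak{A}) - 1$.

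The step that needs the most care — and the only place where the hypotheses really enter — is the claim that $\xi_i$ has a single defect, located at $v$. This combines the tree structure ($v$ is the unique vertex outside $T_i$ meeting $T_i$, so removing $v$ genuinely disconnects the branches) with the bipartiteness of the S-atom ($v$ is a core vertex, hence $x_v = 0$, which is exactly what makes the restricted balance at $u_i$ hold). I would treat the vertex $u_i$ separately from the interior of $T_i$, since it is the one vertex of the branch whose $\mathfrak{A}$-neighbourhood is strictly larger than its $T_i$-neighbourhood.
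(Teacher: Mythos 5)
Your proof is correct, and it takes a genuinely different route from the paper's. The paper's argument is pure counting: fixing a core vertex $u$ of degree $\Delta_{\text{core}}(\mathfrak{A})$, it lower-bounds the number of support vertices by $\Delta_{\text{core}}(\mathfrak{A})+(\core{\mathfrak{A}}-1)$ (the neighbours of $u$, plus at least one further support vertex for each remaining core vertex), and then concludes at once from the identity $\nulidad{\mathfrak{A}}=\supp{}{\mathfrak{A}}-\core{\mathfrak{A}}$ of Theorem \ref{Trknulloftrees}. You instead exhibit $\Delta_{\text{core}}(\mathfrak{A})-1$ explicit linearly independent null vectors: you split a full-support null vector $x$ (Lemma \ref{vecsupp}) along the branches of $\mathfrak{A}-v$, check that each lifted restriction $\xi_i$ has a single nonzero balance entry $x_{u_i}$, located at $v$ (this is where you use that $x$ vanishes on $\Core{\mathfrak{A}}$, i.e.\ the bipartition proposition preceding the theorem), and cancel these defects pairwise; independence is read off at the vertices $u_2,\dots,u_d$, and all steps check out. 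The two proofs spend the same ingredients (a maximum-degree core vertex, and vanishing of null vectors on the core) differently: the paper's version is shorter but leans on Theorem \ref{Trknulloftrees}, whose proof routes through the rank--matching relation $\rank{T}=2\nu(T)$ and the null decomposition of \cite{molina_jaume2016tree}; yours is self-contained constructive linear algebra, independent of the rank formula, and it produces concrete null vectors supported on pairs of branches of $\mathfrak{A}-v$, much in the spirit of the basis constructions of the paper's later sections. Two implicit hypotheses in your argument hold automatically but deserve a word: Lemma \ref{vecsupp} requires $\N{\mathfrak{A}}\neq\{\theta\}$, which follows since $N[\Supp{}{\mathfrak{A}}]=V(\mathfrak{A})$ forces a nonempty support, and your construction needs a core vertex to exist, which excludes only the order-one S-atom, for which the statement is vacuous.
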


\begin{proof}
	Let \(\mathfrak{A}\) an S-atom, and \(u \in \Core{\mathfrak{A}}\) such that \(\deg(u)=\Delta_{\text{core}}(\mathfrak{A})\). It is clear that:
	\begin{align*}
	\supp{}{\mathfrak{A}} & \geq \Delta_{\text{core}}(\mathfrak{A}) +
	\sum_{\substack{v \in  \Core{\mathfrak{A} }\\ d(v,u)=2}} 1 +
	\sum_{\substack{v \in  \Core{\mathfrak{A} }\\ d(v,u)=4}} 1 + \cdots +
	\sum_{\substack{v \in  \Core{\mathfrak{A} }\\ d(v,u)=diam(\mathfrak{A})-2}} 1\\
	{} & = \Delta_{\text{core}}(\mathfrak{A}) + (\core{\mathfrak{A}}-1)
	\end{align*}
	Then, by Theorem \ref{Trknulloftrees}, \(null(\mathfrak{A})=\supp{}{\mathfrak{A}}-\core{\mathfrak{A}} \geq \Delta_{\text{core}}(\mathfrak{A})-1\).
\end{proof}

By construction, for any S-tree \(S\) there is a set of edges in \(E(S)\) that are not edges of any S-atom of \(\mathcal{F}_{A}(S)\). This edges are called the \textbf{bond edges} of \(S\), and it is denoted by \(\BondE{T}\):
\[
\BondE{T}:=E(S) \setminus E(\mathcal{F}_{A}(S)) 
\]
Given \(\mathfrak{A}_{1}, \mathfrak{A}_{2} \in \mathcal{F}_{A}(S) \) we say that \(\mathfrak{A}_{1}\) and \(\mathfrak{A}_{2}\) are \textbf{adjacent atoms}, denoted by \(\mathfrak{A}_{1} \sim \mathfrak{A}_{2}\), if there exists \(e \in \BondE{T}\) such that  \(e \cap V(\mathfrak{A}_{1}) \neq \emptyset \) and  \(e \cap V(\mathfrak{A}_{1}) \neq \emptyset \).

\begin{proposition}
	Let  \(S\) be an S-tree, and \(M \in \mathcal{M}(T)\). Then \(M \cap \BondE{S} = \emptyset \).
\end{proposition}

\begin{proof}
	It follows from Corollary \ref{CoroMatching}.
\end{proof}

The set of bond edges of an arbitrary tree \(T\) is defined by
\[
\BondE{T} := \bigcup_{S \in \mathcal{F}_{S}(T)} \BondE{S}
\]

%

\begin{theorem}
	Let \(S\) be a S-tree. Then
	\[
	m(S)= \prod_{\mathfrak{A} \in \mathcal{F}_{A}(T)} m(\mathfrak{A}) 
	\]
\end{theorem}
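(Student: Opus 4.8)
The plan is to set up a bijection between $\mathcal{M}(S)$ and the Cartesian product $\prod_{\mathfrak{A} \in \mathcal{F}_{A}(S)} \mathcal{M}(\mathfrak{A})$, from which the product formula follows immediately by counting. The crucial structural input is the preceding Proposition, which tells us that no maximum matching of $S$ ever uses a bond edge: for every $M \in \mathcal{M}(S)$ we have $M \cap \BondE{S} = \emptyset$. Since $\BondE{S} = E(S) \setminus E(\mathcal{F}_{A}(S))$, every edge of such an $M$ lies entirely inside a single atom, and because the atoms are obtained by deleting edges they form a partition of $V(S)$ into vertex-disjoint subtrees. Hence each $M \in \mathcal{M}(S)$ decomposes uniquely as $M = \bigcup_{\mathfrak{A} \in \mathcal{F}_{A}(S)} M_{\mathfrak{A}}$, where $M_{\mathfrak{A}} := M \cap E(\mathfrak{A})$ is a matching of the atom $\mathfrak{A}$.

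The main point to verify is that each restriction $M_{\mathfrak{A}}$ is in fact a \emph{maximum} matching of $\mathfrak{A}$. Here I would use the additivity of the core together with the identity $\nu = \core$ for S-trees. By Theorem \ref{MatchingMaximumStrees}, $\nu(S) = \core{S}$ and $\nu(\mathfrak{A}) = \core{\mathfrak{A}}$ for every atom $\mathfrak{A}$ (atoms are S-trees by Theorem \ref{ThS-atoms1}). By Corollary \ref{CoroS-Atoms1} the cores of the atoms are disjoint and cover $\Core{S}$, so $\core{S} = \sum_{\mathfrak{A}} \core{\mathfrak{A}}$. Now a squeezing argument closes the gap: since the atoms are vertex-disjoint, $|M| = \sum_{\mathfrak{A}} |M_{\mathfrak{A}}|$, and each term obeys $|M_{\mathfrak{A}}| \leq \nu(\mathfrak{A}) = \core{\mathfrak{A}}$, whence
\[
\core{S} = \nu(S) = |M| = \sum_{\mathfrak{A}} |M_{\mathfrak{A}}| \leq \sum_{\mathfrak{A}} \core{\mathfrak{A}} = \core{S}.
\]
Equality throughout forces $|M_{\mathfrak{A}}| = \core{\mathfrak{A}} = \nu(\mathfrak{A})$ for every $\mathfrak{A}$, so each $M_{\mathfrak{A}}$ is a maximum matching of its atom. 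This shows the decomposition map $M \mapsto (M_{\mathfrak{A}})_{\mathfrak{A}}$ lands in $\prod_{\mathfrak{A}} \mathcal{M}(\mathfrak{A})$.

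For the converse, given any tuple $(M_{\mathfrak{A}})_{\mathfrak{A}} \in \prod_{\mathfrak{A}} \mathcal{M}(\mathfrak{A})$, form $M := \bigcup_{\mathfrak{A}} M_{\mathfrak{A}}$. Because the atoms are vertex-disjoint subgraphs of $S$, $M$ is a genuine matching of $S$, and its size is $\sum_{\mathfrak{A}} \nu(\mathfrak{A}) = \sum_{\mathfrak{A}} \core{\mathfrak{A}} = \core{S} = \nu(S)$, so $M \in \mathcal{M}(S)$. These two constructions are mutually inverse, giving the desired bijection and therefore $m(S) = \prod_{\mathfrak{A} \in \mathcal{F}_{A}(S)} m(\mathfrak{A})$. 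I expect the only genuine obstacle to be the forcing step showing each $M_{\mathfrak{A}}$ is maximum; everything else is bookkeeping once the Proposition on bond edges and the vertex-disjointness of atoms are in hand.
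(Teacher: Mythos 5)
Your proof is correct and takes essentially the same route as the paper's: both use the fact that maximum matchings of $S$ contain no edge between two core vertices (equivalently, no bond edge) to decompose each $M \in \mathcal{M}(S)$ atom by atom and then recombine, yielding the bijection with $\prod_{\mathfrak{A}} \mathcal{M}(\mathfrak{A})$. The only difference is that you make explicit, via the squeeze $\core{S} = \nu(S) = \sum_{\mathfrak{A}} |M_{\mathfrak{A}}| \leq \sum_{\mathfrak{A}} \core{\mathfrak{A}} = \core{S}$, why each restriction $M_{\mathfrak{A}}$ is maximum and why each recombined union is maximum --- steps the paper asserts with only a citation of Corollary \ref{CoroS-Atoms1}.
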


\begin{proof}
	By Theorem \ref{MatchingMaximumStrees}, a maximum matching \(M\) on \(S\) does not have any edge between two core vertices. By Corollary \ref{CoroS-Atoms1} the next two statements hold:
	\begin{itemize}
		\item If \(M \in \mathcal{M}(S)\), then for all \(\mathfrak{A} \in \mathcal{F}_{A}(S)\) holds \(M \cap E(\mathfrak{A}) \in \mathcal{M}(\mathfrak{A})\).
		\item Let  \(M_{\mathfrak{A}} \in \mathcal{M}(\mathfrak{A})\) for \(\mathfrak{A} \in \mathcal{F}_{A}(S)\), then \(M=\bigcup_{\mathfrak{A} \in \mathcal{F}_{A}(S)} M_{\mathfrak{A}} \in \mathcal{M}(S)\).
	\end{itemize}
This implies that \(	m(S)= \prod_{\mathfrak{A} \in \mathcal{F}_{A}(T)} m(\mathfrak{A}) \).
\end{proof}
\begin{corollary}
	Let \(T\) be a tree. Then
	\[
	m(T)= \prod_{\mathfrak{A} \in \mathcal{F}_{A}(T)} m(\mathfrak{A}) 
	\]
\end{corollary}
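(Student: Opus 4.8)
The plan is to reduce the statement to two facts already at our disposal and then simply compose them. First I would recall the multiplicativity of the number of maximum matchings over the S-forest, namely $m(T)=\prod_{S\in\mathcal{F}_{S}(T)}m(S)$, which is the result of \cite{molina_jaume2016tree} restated at the opening of this section. This reduces the count of maximum matchings of the whole tree $T$ to a product indexed by its S-trees.

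Next I would invoke the immediately preceding Theorem, which asserts the analogous factorization inside each S-tree: for every $S\in\mathcal{F}_{S}(T)$ one has $m(S)=\prod_{\mathfrak{A}\in\mathcal{F}_{A}(S)}m(\mathfrak{A})$. Substituting this into the previous product turns $m(T)$ into the double product $\prod_{S\in\mathcal{F}_{S}(T)}\prod_{\mathfrak{A}\in\mathcal{F}_{A}(S)}m(\mathfrak{A})$.

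The final step is to observe that the A-set of the tree $T$ is, by convention, nothing but the disjoint union of the A-sets of its S-trees, $\mathcal{F}_{A}(T)=\bigcup_{S\in\mathcal{F}_{S}(T)}\mathcal{F}_{A}(S)$ --- the same extension used just above to define the bond edges $\BondE{T}$ of a general tree from the bond edges $\BondE{S}$ of its S-trees. Under this reading the double product collapses to the single product $\prod_{\mathfrak{A}\in\mathcal{F}_{A}(T)}m(\mathfrak{A})$, which is exactly the claim.

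There is no genuine obstacle here: the argument is essentially a one-line composition of the S-forest factorization with the A-set factorization. The only point that merits care is conceptual rather than computational --- one must make explicit that $\mathcal{F}_{A}(\cdot)$, originally defined only for S-trees, is extended to an arbitrary tree $T$ precisely as the union over $\mathcal{F}_{S}(T)$, so that the atoms appearing in the final product are exactly those produced atom-by-atom inside each S-tree, with no double counting and no omitted component.
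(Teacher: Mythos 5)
Your proof is correct and takes essentially the same route the paper intends: the corollary is stated there without proof precisely because it is the composition of the S-forest factorization $m(T)=\prod_{S\in\mathcal{F}_{S}(T)}m(S)$ from \cite{molina_jaume2016tree} with the preceding theorem $m(S)=\prod_{\mathfrak{A}\in\mathcal{F}_{A}(S)}m(\mathfrak{A})$, read through the convention $\mathcal{F}_{A}(T)=\bigcup_{S\in\mathcal{F}_{S}(T)}\mathcal{F}_{A}(S)$. You are also right to flag that convention explicitly, since the paper only formalizes the A-set of an arbitrary tree a bit later in the section, after this corollary is stated.
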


\begin{theorem}
	Let \(S\) be an S-tree. Then 
	\begin{enumerate}
		\item \(\nulidad{S} =\sum_{\mathfrak{A} \in \mathcal{F}_{A}(S)} \nulidad{\mathfrak{A}} \).
		\item \(\rank{S} =\sum_{\mathfrak{A} \in \mathcal{F}_{A}(S)} \rank{\mathfrak{A}} \).
	\end{enumerate}
\end{theorem}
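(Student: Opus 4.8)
The plan is to reduce both statements to the additivity of the support and the core over the A-set partition, using the simplified form of Theorem \ref{Trknulloftrees} for S-trees.

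First I would observe that an S-tree has empty N-forest: since \(N[\Supp{}{S}]=V(S)\), the induced subgraph \(S\langle N[\Supp{}{S}]\rangle\) is all of the connected tree \(S\), so \(\mathcal{F}_{S}(S)=\{S\}\) and \(\mathcal{F}_{N}(S)=\emptyset\), giving \(v(\mathcal{F}_{N}(S))=0\). Feeding this into Theorem \ref{Trknulloftrees} yields the clean identities \(\nulidad{S}=\supp{}{S}-\core{S}\) and \(\rank{S}=2\core{S}\). Because every \(\mathfrak{A}\in\mathcal{F}_{A}(S)\) is again an S-tree (Theorem \ref{ThS-atoms1}), the very same two identities hold verbatim with \(S\) replaced by \(\mathfrak{A}\), namely \(\nulidad{\mathfrak{A}}=\supp{}{\mathfrak{A}}-\core{\mathfrak{A}}\) and \(\rank{\mathfrak{A}}=2\core{\mathfrak{A}}\).

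Next I would establish the additivity of support and core over the A-set. The vertex sets \(V(\mathfrak{A})\), for \(\mathfrak{A}\in\mathcal{F}_{A}(S)\), are exactly the connected components left after deleting the edges between core vertices, hence they partition \(V(S)\). By Theorem \ref{ThS-atoms1} we have \(\Supp{}{\mathfrak{A}}=\Supp{}{S}\cap V(\mathfrak{A})\) and \(\Core{\mathfrak{A}}=\Core{S}\cap V(\mathfrak{A})\), so these restricted supports (respectively cores) are pairwise disjoint, and by Corollary \ref{CoroS-Atoms1} their unions recover \(\Supp{}{S}\) and \(\Core{S}\). Counting cardinalities across the partition then gives \(\supp{}{S}=\sum_{\mathfrak{A}\in\mathcal{F}_{A}(S)}\supp{}{\mathfrak{A}}\) and \(\core{S}=\sum_{\mathfrak{A}\in\mathcal{F}_{A}(S)}\core{\mathfrak{A}}\).

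Finally I would substitute. For the nullity, \(\nulidad{S}=\supp{}{S}-\core{S}=\sum_{\mathfrak{A}}\left(\supp{}{\mathfrak{A}}-\core{\mathfrak{A}}\right)=\sum_{\mathfrak{A}}\nulidad{\mathfrak{A}}\); for the rank, \(\rank{S}=2\core{S}=\sum_{\mathfrak{A}}2\core{\mathfrak{A}}=\sum_{\mathfrak{A}}\rank{\mathfrak{A}}\). Since every ingredient is already available, I do not expect a genuine obstacle here; the only point deserving a moment's care is confirming that the A-atoms really do partition the vertex set, which is immediate from their definition as the connected components remaining after the core-to-core edges are removed.
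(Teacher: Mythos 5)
Your proof is correct and follows essentially the same route as the paper: both reduce the claim to the identities \(\nulidad{S}=\supp{}{S}-\core{S}\) and \(\rank{S}=2\core{S}\) from Theorem \ref{Trknulloftrees}, and then use the additivity of support and core over the A-set (Theorem \ref{ThS-atoms1} and Corollary \ref{CoroS-Atoms1}). The only cosmetic difference is that the paper obtains \(\rank{S}=2\core{S}\) via \(\rank{S}=v(S)-\nulidad{S}\), whereas you get it by noting \(\mathcal{F}_{N}(S)=\emptyset\) for an S-tree; both are immediate.
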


\begin{proof}
	By Theorem \ref{Trknulloftrees} and Corollary \ref{CoroS-Atoms1}
	\begin{align*}
	\nulidad{S} & = \supp{}{S}-\core{S}\\
	{} &  = \sum_{\mathfrak{A} \in \mathcal{F}_{A}(S)} \supp{}{\mathfrak{A}} - \sum_{\mathfrak{A} \in \mathcal{F}_{A}(S)} \core{\mathfrak{A}}\\
	{} & = \sum_{\mathfrak{A} \in \mathcal{F}_{A}(S)} \nulidad{\mathfrak{A}}
	\end{align*}
	and, as in S-trees in general and S-atoms in particular holds that the rank is two times the cardinality of the core
	\begin{align*}
	\rank{S} & = v(S)-\nulidad{S}\\
	{} & = (\supp{}{S}+\core{S})-(\supp{}{S}-\core{S})\\
	{} & = 2 \core{S}\\
	{} & = 2 \sum_{\mathfrak{A} \in \mathcal{F}_{A}(S)} \core{\mathfrak{A}}\\
	{} & = \sum_{\mathfrak{A} \in \mathcal{F}_{A}(S)} \rank{\mathfrak{A}} 
	\end{align*}
\end{proof}

\begin{corollary}
Let \(S\) be an S-tree. Then \(\N{S} = \displaystyle \bigoplus_{\mathfrak{A} \in \mathcal{F}_{A}(S)} \up{\N{\mathfrak{A}}}{$S$}{$\mathfrak{A}$}\).
\end{corollary}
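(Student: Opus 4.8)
The plan is to imitate the proof of Proposition \ref{C_null}, exploiting that the vertex sets of the atoms partition the vertex set of $S$. Indeed, by definition $\mathcal{F}_{A}(S)$ is the set of connected components obtained from $S$ by deleting the core--core edges $\BondE{S}$, so $\{V(\mathfrak{A}) : \mathfrak{A} \in \mathcal{F}_{A}(S)\}$ is a partition of $V(S)$. First I would record the structural fact that, for an S-tree, $V(S) = \Supp{}{S} \sqcup \Core{S}$ is a disjoint union: this holds because $\Core{S}$ is a minimum vertex cover by Theorem \ref{MatchingMaximumStrees}, hence its complement is the maximum independent set $\Supp{}{S}$ of Theorem \ref{S-tree-Independent}. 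In particular every vector of $\N{S}$ vanishes on $\Core{S}$, and likewise every vector of $\N{\mathfrak{A}}$ vanishes on $\Core{\mathfrak{A}} = \Core{S} \cap V(\mathfrak{A})$ (Theorem \ref{ThS-atoms1}).

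The crux is to prove the containment $\up{\N{\mathfrak{A}}}{$S$}{$\mathfrak{A}$} \subseteq \N{S}$ for each atom $\mathfrak{A}$. Fix $x \in \N{\mathfrak{A}}$ and set $y = \up{x}{$S$}{$\mathfrak{A}$}$; I must check $(A(S)y)_j = 0$ for all $j \in V(S)$. The key observation is that a support vertex of $\mathfrak{A}$ retains all of its $S$-neighbours inside $\mathfrak{A}$: since $\Supp{}{S}$ is independent, every edge at such a vertex is a support--core edge, hence never a bond edge, hence never deleted. Thus for $j \in \Supp{}{\mathfrak{A}}$ one has $N_{S}(j) = N_{\mathfrak{A}}(j)$, whence $(A(S)y)_j = (A(\mathfrak{A})x)_j = 0$. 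For a core vertex $j \in \Core{\mathfrak{A}}$, its core neighbours all carry $y$-value zero (those in $\mathfrak{A}$ because $x$ vanishes on the core, those outside by definition of the lift), while its support neighbours all lie in $\mathfrak{A}$ by the same argument, so again $(A(S)y)_j = (A(\mathfrak{A})x)_j = 0$. Finally, for $j \notin V(\mathfrak{A})$ no neighbour of $j$ can lie in $\Supp{}{\mathfrak{A}}$ (such a support vertex would drag $j$ into $\mathfrak{A}$), so every term of $(A(S)y)_j$ vanishes. This is essentially the computation already performed in Theorem \ref{ThS-atoms1}.

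With the inclusion in hand, directness is immediate: because the $V(\mathfrak{A})$ are pairwise disjoint, the spaces $\up{\N{\mathfrak{A}}}{$S$}{$\mathfrak{A}$}$ have pairwise disjoint supports, so a relation $\sum_{\mathfrak{A}} y_{\mathfrak{A}} = \theta$ forces each summand to vanish coordinatewise. Hence $\bigoplus_{\mathfrak{A} \in \mathcal{F}_{A}(S)} \up{\N{\mathfrak{A}}}{$S$}{$\mathfrak{A}$}$ is a genuine internal direct sum sitting inside $\N{S}$. A dimension count then closes the argument: by the preceding theorem its dimension is $\sum_{\mathfrak{A}} \nulidad{\mathfrak{A}} = \nulidad{S} = \dim \N{S}$, so the inclusion is an equality.

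The step I expect to be the main obstacle is the containment, and within it the verification at the core vertices of $\mathfrak{A}$: one must be certain that bond edges link $j$ only to other core vertices, on which $y$ is zero, so that no nonzero contribution leaks in from a neighbouring atom. Once the ``support vertices keep their neighbours inside the atom'' principle is isolated, everything else is bookkeeping with the partition together with the nullity identity.
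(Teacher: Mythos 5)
Your proof is correct, but it proves the mirror image of the inclusion the paper proves. The paper's own proof is downward: it takes \(x \in \N{S}\) and checks that \(A(\mathfrak{A})\down{x}{$S$}{$\mathfrak{A}$} = \theta\) for each atom \(\mathfrak{A}\) (the neighbours a vertex of \(\mathfrak{A}\) loses in passing from \(S\) to \(\mathfrak{A}\) lie across bond edges, hence are core vertices, on which \(x\) vanishes), which yields \(\N{S} \subseteq \bigoplus_{\mathfrak{A} \in \mathcal{F}_{A}(S)} \up{\N{\mathfrak{A}}}{$S$}{$\mathfrak{A}$}\); equality is then left implicit, coming from the nullity identity \(\nulidad{S} = \sum_{\mathfrak{A} \in \mathcal{F}_{A}(S)} \nulidad{\mathfrak{A}}\) of the theorem immediately preceding the corollary. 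You instead prove the upward containment \(\up{\N{\mathfrak{A}}}{$S$}{$\mathfrak{A}$} \subseteq \N{S}\), which obliges you to do one verification the paper's direction never meets --- that \((A(S)y)_{j} = 0\) at vertices \(j \notin V(\mathfrak{A})\), settled by your observation that support vertices of \(\mathfrak{A}\) retain all their \(S\)-neighbours inside \(\mathfrak{A}\) --- and you then close with directness (disjoint vertex sets) and the same dimension count. Both arguments turn on exactly the same two structural facts: null vectors vanish on cores, and bond edges join core vertices to core vertices. What your version buys is explicitness: the directness of the sum and the passage from a one-sided inclusion to equality, which the paper leaves tacit, are spelled out in full; the cost is the extra case analysis outside the atom, which the restriction argument avoids entirely.
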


\begin{proof}
	Let \(x \in \N{S}\), given \(\mathfrak{A} \in \mathcal{F}_{A}(S)\) is clear that \(A(\mathfrak{A})\down{x}{$S$}{$\mathfrak{A}$} =\theta \in \mathbb{R}^{\mathfrak{A}} \). Hence
	\[
	\N{S} \subset \bigoplus_{\mathfrak{A} \in \mathcal{F}_{A}(S)} \up{\N{\mathfrak{A}}}{$S$}{$\mathfrak{A}$}
	\]
\end{proof}

For any S-atom \(\mathfrak{A}\), its rank is \(2\core{\mathfrak{A}}\). But actually, more can be say.

\begin{theorem}
	Let \(\mathfrak{A}\) be an S-atom. The set 
	\[
	B_{\mathcal{R}}(\mathfrak{A}):=\{e_{v},e_{R(v)}\in \mathbb{R}^{\mathfrak{A}}: v \in \Core{\mathfrak{A}}\}
	\]
	is a basis of \(\Rank{\mathfrak{A}}\).
\end{theorem}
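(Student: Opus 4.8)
The plan is to use the bipartite structure of an S-atom together with the rank identity $\rank{\mathfrak{A}}=2\core{\mathfrak{A}}$ (recorded just before the statement), noting that $|B_{\mathcal{R}}(\mathfrak{A})|=2\core{\mathfrak{A}}$ as well. By the preceding proposition $\mathfrak{A}$ is $(\Core{\mathfrak{A}},\Supp{}{\mathfrak{A}})$-bipartite, so every neighbor of a core vertex lies in the support and every neighbor of a support vertex lies in the core. First I would describe the columns of $A(\mathfrak{A})$: for $v\in\Core{\mathfrak{A}}$ one has $N(v)\subseteq\Supp{}{\mathfrak{A}}$, whence $N(v)=R(v)$ and the column equals $A_v=e_{R(v)}$, a vector supported on $\Supp{}{\mathfrak{A}}$; for $u\in\Supp{}{\mathfrak{A}}$ one has $N(u)\subseteq\Core{\mathfrak{A}}$, so $A_u=\sum_{v\in N(u)}e_v$ is supported on $\Core{\mathfrak{A}}$. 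Consequently $\Rank{\mathfrak{A}}=U+W$ with $U:=\operatorname{span}\{e_{R(v)}:v\in\Core{\mathfrak{A}}\}$ supported on $\Supp{}{\mathfrak{A}}$ and $W:=\operatorname{span}\{A_u:u\in\Supp{}{\mathfrak{A}}\}$ supported on $\Core{\mathfrak{A}}$.

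Since $U$ and $W$ live on the disjoint coordinate sets $\Supp{}{\mathfrak{A}}$ and $\Core{\mathfrak{A}}$, the sum is direct and $\dim\Rank{\mathfrak{A}}=\dim U+\dim W$. Writing $B$ for the $\Core{\mathfrak{A}}\times\Supp{}{\mathfrak{A}}$ biadjacency matrix, the vectors $e_{R(v)}$ are precisely the rows of $B$ and the $A_u$ precisely its columns, so $\dim U=\rank{B}=\dim W$; together with $\dim\Rank{\mathfrak{A}}=\rank{\mathfrak{A}}=2\core{\mathfrak{A}}$ this forces $\rank{B}=\core{\mathfrak{A}}=|\Core{\mathfrak{A}}|$. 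Thus $W$ has dimension $|\Core{\mathfrak{A}}|$ inside the $|\Core{\mathfrak{A}}|$-dimensional space of core-supported vectors, i.e.\ $W$ is all of that space; in particular every canonical vector $e_v$ with $v\in\Core{\mathfrak{A}}$ lies in $W\subseteq\Rank{\mathfrak{A}}$. I expect this to be the only real obstacle: the $e_v$ are not columns of $A(\mathfrak{A})$, and it is exactly the full-row-rank conclusion $\rank{B}=|\Core{\mathfrak{A}}|$ that places them in the range.

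It then remains to assemble the basis. The set $B_{\mathcal{R}}(\mathfrak{A})$ is contained in $\Rank{\mathfrak{A}}$ (the $e_{R(v)}$ are columns and the $e_v$ lie in $W$) and has $2\core{\mathfrak{A}}=\dim\Rank{\mathfrak{A}}$ elements. For linear independence I would use the disjoint supports: a relation $\sum_{v}a_v e_v+\sum_{v}b_v e_{R(v)}=\theta$ separates into its restriction to $\Core{\mathfrak{A}}$, forcing all $a_v=0$ because the $e_v$ are distinct canonical vectors, and its restriction to $\Supp{}{\mathfrak{A}}$, forcing all $b_v=0$ because the $e_{R(v)}$ are the rows of the full-row-rank matrix $B$ and hence independent. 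An independent subset of $\Rank{\mathfrak{A}}$ of cardinality $\dim\Rank{\mathfrak{A}}$ is a basis, and the proof is complete.
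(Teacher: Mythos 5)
Your proof is correct, and it is built from the same two ingredients as the paper's: the bipartite description of the columns of \(A(\mathfrak{A})\) (a core column equals \(e_{R(v)}\), a support column is a sum of core canonical vectors) and the identity \(\rank{\mathfrak{A}}=2\core{\mathfrak{A}}\). The difference is the direction of the inclusion you establish. The paper proves \(\Rank{\mathfrak{A}}\subseteq\langle B_{\mathcal{R}}(\mathfrak{A})\rangle\), which is immediate from the column description, and then finishes by counting cardinality together with the bare assertion that \(B_{\mathcal{R}}(\mathfrak{A})\) is ``clearly'' linearly independent. You prove the reverse inclusion \(B_{\mathcal{R}}(\mathfrak{A})\subseteq\Rank{\mathfrak{A}}\), which is not immediate for the vectors \(e_{v}\) with \(v\in\Core{\mathfrak{A}}\), and this forces you to introduce the biadjacency matrix \(B\) and the equality of its row and column ranks, yielding \(\rank{B}=\core{\mathfrak{A}}\). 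That extra step buys something real: the independence of the vectors \(e_{R(v)}\) (the rows of \(B\)) is in fact the only delicate point of the whole statement --- distinct bouquets may intersect, and a core vertex need not have a pendant support neighbor (the middle core vertex of the path on seven vertices already shows this) --- and it is exactly your full-row-rank conclusion that certifies it. So the paper's proof is shorter, but its ``clearly'' conceals the same rank fact that your argument makes explicit; your version is the more self-contained of the two, and as a bonus it shows that \(\Rank{\mathfrak{A}}\) contains every vector supported on \(\Core{\mathfrak{A}}\).
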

\begin{proof}
	Let \(A_{u}\) be the column of \(A(\mathfrak{A})\), the adjacency matrix of \(\mathfrak{A}\), associated to the vertex \(u\). If \(u \in \Supp{}{\mathfrak{A}}\), then 
	\[
	A_{u}=\sum_{\substack{v \in \Core{\mathfrak{A}} \\ v \sim u} } e_{v}
	\]
	If \(u \in \Core{\mathfrak{A}}\), then 
	\[
	A_{u} = \sum_{\substack{v \in \Supp{}{\mathfrak{A}} \\ v \sim u} } e_{v}=e_{R(u)}
	\]
	Hence \(\Rank{\mathfrak{A}} \subset \langle B_{\mathcal{R}}(\mathfrak{A}) \rangle \).
	Clearly \(B_{\mathcal{R}}(\mathfrak{A})\) is a set of linear independent vectors, and \(|B_{\mathcal{R}}(\mathfrak{A})|=2\core{\mathfrak{A}}\). Hence \(B_{\mathcal{R}}(\mathfrak{A})\) is a base of \(\Rank{\mathfrak{A}}\).	
\end{proof}

\begin{corollary}
	Let \(S\) be an S-tree. Then \( \Rank{S} =  \displaystyle \bigoplus_{\mathfrak{A} \in \mathcal{F}_{A}(S)} \up{\Rank{\mathfrak{A}}}{$S$}{$\mathfrak{A}$}\).
\end{corollary}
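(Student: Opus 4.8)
The plan is to prove the equality of subspaces by combining a disjoint-support argument, a single inclusion, and a dimension count, exactly in the spirit of the null-space corollary just above. First I would record that the A-set partitions the vertices: forming \(\mathcal{F}_{A}(S)\) only deletes edges between core vertices, never vertices themselves, so every vertex of \(S\) lies in exactly one atom and \(V(S)=\bigcup_{\mathfrak{A}\in\mathcal{F}_{A}(S)}V(\mathfrak{A})\) is a disjoint union. Consequently the lifted subspaces \(\up{\Rank{\mathfrak{A}}}{$S$}{$\mathfrak{A}$}\) have pairwise disjoint supports, the sum is automatically direct, and (since the lift is injective) its dimension equals \(\sum_{\mathfrak{A}\in\mathcal{F}_{A}(S)}\rank{\mathfrak{A}}\).

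By the preceding theorem, \(\rank{S}=\sum_{\mathfrak{A}\in\mathcal{F}_{A}(S)}\rank{\mathfrak{A}}\), so the right-hand side already has the correct dimension \(\rank{S}=\dim\Rank{S}\). It therefore suffices to establish one inclusion, and I would prove \(\Rank{S}\subseteq\bigoplus_{\mathfrak{A}}\up{\Rank{\mathfrak{A}}}{$S$}{$\mathfrak{A}$}\) by checking that every column \(A(S)_{w}\) of the adjacency matrix of \(S\) lies in the right-hand side.

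The column check splits into two cases. If \(w\in\Supp{}{S}\), then since \(\Supp{}{S}\) is independent (Theorem \ref{S-tree-Independent}) every neighbour of \(w\) is a core vertex, and no support--core edge is a bond edge, so all these neighbours sit in the single atom \(\mathfrak{A}\) containing \(w\); hence \(A(S)_{w}=\up{A(\mathfrak{A})_{w}}{$S$}{$\mathfrak{A}$}\in\up{\Rank{\mathfrak{A}}}{$S$}{$\mathfrak{A}$}\). If \(w\in\Core{S}\), lying in atom \(\mathfrak{A}\), then \(A(S)_{w}=e_{R(w)}+\sum_{w'}e_{w'}\), the sum ranging over the core neighbours \(w'\) of \(w\) joined to it by bond edges. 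Here \(e_{R(w)}=A(\mathfrak{A})_{w}\in\Rank{\mathfrak{A}}\), since the bouquet \(R(w)\) is unchanged on passing to the atom, while each \(w'\) lies in some other atom \(\mathfrak{A}'\) with \(e_{w'}\in B_{\mathcal{R}}(\mathfrak{A}')\subseteq\Rank{\mathfrak{A}'}\). After lifting, \(A(S)_{w}\) is thus a sum of vectors drawn from distinct atom ranges, so it lies in the direct sum.

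The main obstacle is precisely this core case: the bond edges make \(A(S)_{w}\) differ from the column of any single atom, and the stray terms \(e_{w'}\) must be reabsorbed into the ranges of the neighbouring atoms, which is exactly where the explicit basis \(B_{\mathcal{R}}(\mathfrak{A}')\) from the previous theorem does the work. Once the inclusion is in hand, the matching dimensions force equality. I would also remark on a slicker alternative: since \(A(S)\) is symmetric, \(\Rank{S}=\N{S}^{\perp}\), and combining the orthogonal block decomposition \(\mathbb{R}^{S}=\bigoplus_{\mathfrak{A}}\mathbb{R}^{\mathfrak{A}}\) with the null-space decomposition corollary yields the claim at once; but the column computation keeps everything inside the self-contained \(\{-1,0,1\}\)-basis framework of the paper.
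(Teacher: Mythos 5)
Your proof is correct and takes essentially the same route as the paper's: a column-by-column verification of \(A(S)\), split into the supported-vertex case (where the column is the lift of the corresponding atom column) and the core-vertex case (where the bond-edge terms \(e_{w'}\) are absorbed into the ranges of the neighbouring atoms via the bases \(B_{\mathcal{R}}(\mathfrak{A}')\)). You additionally make explicit the dimension count \(\rank{S}=\sum_{\mathfrak{A} \in \mathcal{F}_{A}(S)}\rank{\mathfrak{A}}\) needed to upgrade the single inclusion to equality, a step the paper leaves implicit.
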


\begin{proof}
	Let \(A_{u}\) be the column of \(A(S)\) associated to the vertex \(u\). If \(u \in \Supp{}{S}\), then there exists only one \(\mathfrak{A} \in \mathcal{F}_{A}(S)\) such that \(u \in \Supp{}{\mathfrak{A}}\). Thus
	\[
	A_{u}=\sum_{\substack{v \in \Core{\mathfrak{A}} \\ v \sim u} } \up{e_{v}}{S}{$\mathfrak{A}$}
	\]
	If \(u \in \Core{S}\), then there exists only a \(\mathfrak{A}  \in \mathcal{F}_{A}(S)\) such that \(u \in \Core{\mathfrak{A}}\). Thus 
	\[
	A_{u} = \up{e_{R(u)}}{$S$}{$\mathfrak{A}$}+\sum_{\{u,v\} \in \BondE{S}} \up{e_{v}}{$S$}{$\mathfrak{A}$}
	\]
\end{proof}

\begin{corollary}
	Let \(S\) be an S-tree. Then
	\[
	\bigcup_{\mathfrak{A} \in \mathcal{F}_{A}(S)}
	\up{ B_{\mathcal{R}}(\mathfrak{A})}{S}{$\mathfrak{A}$}
	\]
	is a basis of \(\Rank{S}\).
\end{corollary}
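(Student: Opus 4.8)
The plan is to read this off from the preceding corollary, which supplies the internal direct-sum decomposition
\[
\Rank{S} = \bigoplus_{\mathfrak{A} \in \mathcal{F}_{A}(S)} \up{\Rank{\mathfrak{A}}}{$S$}{$\mathfrak{A}$},
\]
combined with the theorem asserting that $B_{\mathcal{R}}(\mathfrak{A})$ is a basis of $\Rank{\mathfrak{A}}$. So the whole statement is essentially a bookkeeping consequence of material already established.

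First I would observe that for each atom $\mathfrak{A} \in \mathcal{F}_{A}(S)$ the lift $\up{\cdot}{$S$}{$\mathfrak{A}$}$ is an injective linear map from $\mathbb{R}^{\mathfrak{A}}$ into $\mathbb{R}^{S}$: by definition it only appends zeros on the coordinates outside $V(\mathfrak{A})$, so it preserves linear independence and carries $\Rank{\mathfrak{A}}$ isomorphically onto its image $\up{\Rank{\mathfrak{A}}}{$S$}{$\mathfrak{A}$}$. Consequently $\up{B_{\mathcal{R}}(\mathfrak{A})}{$S$}{$\mathfrak{A}$}$ is a basis of the summand $\up{\Rank{\mathfrak{A}}}{$S$}{$\mathfrak{A}$}$.

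Next I would note that distinct atoms are distinct connected components of the A-forest, hence have disjoint vertex sets $V(\mathfrak{A})$. Therefore the lifted vectors coming from different atoms are supported on disjoint sets of coordinates, and the union $\bigcup_{\mathfrak{A} \in \mathcal{F}_{A}(S)} \up{B_{\mathcal{R}}(\mathfrak{A})}{$S$}{$\mathfrak{A}$}$ is a genuine disjoint union of bases, one for each direct summand. Since a union of bases of the summands of an internal direct sum is a basis of the whole space, the displayed set is a basis of $\Rank{S}$.

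There is no real obstacle here; the only points meriting a line of care are that the lift preserves linear independence and that the contributions from different atoms do not overlap, both of which follow from the disjointness of the vertex sets $V(\mathfrak{A})$. As a consistency check one may count cardinalities: each $B_{\mathcal{R}}(\mathfrak{A})$ consists of $2\core{\mathfrak{A}}$ vectors, and by Corollary \ref{CoroS-Atoms1} the cores of the atoms partition $\Core{S}$, so the union has size $\sum_{\mathfrak{A} \in \mathcal{F}_{A}(S)} 2\core{\mathfrak{A}} = 2\core{S} = \rank{S}$, exactly the dimension of $\Rank{S}$.
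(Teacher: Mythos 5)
Your proposal is correct and is essentially the paper's own derivation: the paper states this corollary without proof precisely because it follows, as you argue, by combining the preceding corollary \(\Rank{S} = \bigoplus_{\mathfrak{A} \in \mathcal{F}_{A}(S)} \up{\Rank{\mathfrak{A}}}{$S$}{$\mathfrak{A}$}\) with the theorem that \(B_{\mathcal{R}}(\mathfrak{A})\) is a basis of \(\Rank{\mathfrak{A}}\). Your points about the lift preserving linear independence, the disjointness of the atoms' vertex sets, and the dimension count \(\sum_{\mathfrak{A} \in \mathcal{F}_{A}(S)} 2\core{\mathfrak{A}} = 2\core{S} = \rank{S}\) are exactly the bookkeeping the paper leaves implicit.
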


With \(B_{\mathcal{C}}(\mathbb{R}^{G})\) we denote the standard basis of \(\mathbb{R}^{G}\).

\begin{corollary}
	Let \(T\) be a tree. Then
	\[
	\bigcup_{N \in \mathcal{F}_{N}(T)} \up{B_{\mathcal{C}}(\mathbb{R}^{G})}{T}{N}
	\cup
	\bigcup_{S \in \mathcal{F}_{S}(T)}
	\up{
		\left(	
		\bigcup_{\mathfrak{A} \in \mathcal{F}_{A}(S)}
		\up{ B_{\mathcal{R}}(\mathfrak{A})}{S}{$\mathfrak{A}$}
		\right)
	}{T}{S}
	\]
	is a basis of \(\Rank{T}\).
\end{corollary}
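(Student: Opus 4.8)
The plan is to mirror the preceding range-basis corollaries: show that every column of \(A(T)\) lies in the span of the displayed set, call it \(\mathcal{B}\), and then close by a dimension count. The key preliminary is a structural fact about how connection edges meet the null decomposition. Since \(V(\mathcal{F}_S(T)) = N[\Supp{}{T}]\), any neighbour of a support vertex already lies in \(V(\mathcal{F}_S(T))\) and is joined to it by an edge of its own S-tree; hence no connection edge is incident to a support vertex. Combining this with the observation noted just after the definition of \(\ConnE{T}\), that every \(e \in \ConnE{T}\) meets both \(V(\mathcal{F}_S(T))\) and \(V(\mathcal{F}_N(T))\), I conclude that each connection edge joins a \emph{core} vertex of some \(S \in \mathcal{F}_S(T)\) to a vertex of some \(N \in \mathcal{F}_N(T)\); likewise, adjacent vertices of \(V(\mathcal{F}_N(T))\) lie in a common N-tree.

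Next I would count \(\mathcal{B}\). Each N-tree is non-singular, so \(\Rank{N} = \mathbb{R}^{N}\) and \(\up{B_{\mathcal{C}}(\mathbb{R}^{N})}{T}{N}\) supplies \(v(N)\) vectors, for a total of \(v(\mathcal{F}_N(T))\). For each \(S \in \mathcal{F}_S(T)\) the inner set is a basis of \(\Rank{S}\) by the previous corollary, and as the rank of an S-tree is \(2\core{S}\) this supplies \(2\core{S}\) vectors, for a total of \(2\core{T}\) using \(\core{T} = \sum_{S}\core{S}\). Hence \(|\mathcal{B}| = v(\mathcal{F}_N(T)) + 2\core{T} = \rank{T}\) by Theorem \ref{Trknulloftrees}.

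Then I would check that every column \(A_u\) of \(A(T)\) lies in \(\langle \mathcal{B} \rangle\), splitting on the location of \(u\). If \(u \in V(N)\), then \(A_u = \sum_{w \sim u} e_w\), where each \(w \in V(N)\) gives a vector \(e_w\) already in the N-part of \(\mathcal{B}\) and each \(w\) reached by a connection edge is a core vertex, so \(e_w\) lies in the S-part of \(\mathcal{B}\). If \(u \in \Supp{}{S}\), all neighbours of \(u\) remain in \(S\), so \(A_u\) is the lift of the column \(A(S)_u \in \Rank{S}\), hence in \(\langle \mathcal{B}\rangle\) by the range-basis result for S-trees. If \(u \in \Core{S}\), the neighbours of \(u\) inside \(S\) produce the lift of \(A(S)_u\) (expressed through \(e_{R(u)}\) and the bond-edge terms as recorded earlier for S-trees), while the neighbours reached by connection edges land in N-trees and contribute canonical vectors from the N-part of \(\mathcal{B}\). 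In all cases \(A_u \in \langle \mathcal{B}\rangle\), so \(\Rank{T} \subseteq \langle \mathcal{B}\rangle\).

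Finally I would conclude. From \(\Rank{T} \subseteq \langle\mathcal{B}\rangle\) and \(|\mathcal{B}| = \rank{T}\) we get the chain \(\dim \Rank{T} \le \dim\langle\mathcal{B}\rangle \le |\mathcal{B}| = \rank{T} = \dim \Rank{T}\), so every inequality is an equality: \(\mathcal{B}\) spans \(\Rank{T}\) and, having \(\dim\langle\mathcal{B}\rangle\) elements, is linearly independent, i.e.\ a basis. I expect the main obstacle to be the structural fact of the first step, that connection edges attach to core vertices and never to support vertices, since the clean separation into N-contributions and S-contributions in the two mixed cases depends entirely on it; the remaining computations are routine consequences of the range-basis corollaries for S-atoms and S-trees.
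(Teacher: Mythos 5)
Correct --- the paper states this corollary without proof, and your argument supplies exactly the justification it leaves implicit, mirroring the column-expansion-plus-dimension-count technique the paper itself uses to prove that \(B_{\mathcal{R}}(\mathfrak{A})\) is a basis of \(\Rank{\mathfrak{A}}\) and that \(\Rank{S}\) decomposes over \(\mathcal{F}_{A}(S)\). Your key structural step --- that every connection edge joins a core vertex of some \(S \in \mathcal{F}_{S}(T)\) to a vertex of some \(N \in \mathcal{F}_{N}(T)\), and never touches a support vertex, since any neighbour of a support vertex lies in \(N[\Supp{}{T}]\) and is joined to it by an edge of the induced S-forest --- is precisely the fact the paper needs but never makes explicit, and the rest (counting \(v(\mathcal{F}_{N}(T)) + 2\core{T} = \rank{T}\) via Theorem \ref{Trknulloftrees}, then splitting each column \(A_{u}\) by the location of \(u\)) is exactly the intended argument.
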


Thus
\[
\{e_{1},e_{\{2,3\}},e_{4}, e_{\{6,7\}},e_{5}, e_{\{7,8\}},e_{9},e_{\{10,11,12\}},e_{13}, \dots, e_{18}\}
\]
is a basis of \(\Rank{T}\), for the tree \(T\) in Figure 2.

We now define the A-set for arbitrary trees.
\begin{definition}
	Let \(T\) be a tree, the \textbf{A-set} of \(T\) is
	\[
	\mathcal{F}_{A}(T):=\bigcup_{S \in \mathcal{F}_{S}(T)} \mathcal{F}_{A}(S)
	\]
\end{definition}

From the definitions of S-set and S-atom we have that the A-set of a tree \(T\) is a set of S-atoms.



\begin{corollary}
	Let \(T\) be a tree. Then
	\begin{align*}
	\Rank{T}& = \bigoplus_{N \in \mathcal{F}_{N}(T)} 
	\up{\Rank{N}}{$S$}{$N$}
	\oplus
	\bigoplus_{\mathfrak{A} \in \mathcal{F}_{A}(T)}
	\up{\Rank{\mathfrak{A}}}{$S$}{$\mathfrak{A}$}
	\\
	\rank{T} &=\sum_{N \in \mathcal{F}_{N}(T)} \rank{N}+\sum_{\mathfrak{A} \in \mathcal{F}_{A}(T)} \rank{\mathfrak{A}}\\
	\N{T} &= \bigoplus_{\mathfrak{A} \in \mathcal{F}_{A}(T)} 
	\up{\N{\mathfrak{A}}}{$S$}{$\mathfrak{A}$}
	\\
	\nulidad{T} & = \sum_{\mathfrak{A} \in \mathcal{F}_{A}(T)} \nulidad{\mathfrak{A}}
	\end{align*}
\end{corollary}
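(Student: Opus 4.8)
The plan is to read this corollary as the transport to an arbitrary tree \(T\) of the decompositions already proved for a single S-tree. Everything follows by chaining two facts---the S-forest decompositions of \(\N{T}\) and \(\Rank{T}\) at the top level, and the A-set decompositions inside each S-tree---together with the transitivity of the lift \(\up{}{}{}\) and the defining identity \(\mathcal{F}_A(T)=\bigcup_{S\in\mathcal{F}_S(T)}\mathcal{F}_A(S)\). The lift transitivity I need is \(\up{(\up{x}{$S$}{$\mathfrak{A}$})}{$T$}{$S$}=\up{x}{$T$}{$\mathfrak{A}$}\) whenever \(\mathfrak{A}\leq S\leq T\); it is immediate from the zero-padding definition of \(\up{}{}{}\) and is the dual of the restriction identity recorded in Section~\ref{SecStrees}.

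First I would settle the null space. Proposition~\ref{C_null} gives \(\N{T}=\bigoplus_{S\in\mathcal{F}_S(T)}\up{\N{S}}{$T$}{$S$}\), and the S-atom decomposition of an S-tree gives \(\N{S}=\bigoplus_{\mathfrak{A}\in\mathcal{F}_A(S)}\up{\N{\mathfrak{A}}}{$S$}{$\mathfrak{A}$}\). Substituting the second into the first and collapsing the iterated lift through \(S\) by the transitivity above, the double direct sum reindexes, via \(\mathcal{F}_A(T)=\bigcup_S\mathcal{F}_A(S)\), to \(\N{T}=\bigoplus_{\mathfrak{A}\in\mathcal{F}_A(T)}\up{\N{\mathfrak{A}}}{$T$}{$\mathfrak{A}$}\). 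Since the lift preserves dimension, taking dimensions yields \(\nulidad{T}=\sum_{\mathfrak{A}\in\mathcal{F}_A(T)}\nulidad{\mathfrak{A}}\). The N-trees are absent here because each is nonsingular, so \(\N{N}=\{\theta\}\).

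For the range I would start from the explicit basis of \(\Rank{T}\) obtained in the preceding corollary. Its two blocks are the lifts to \(T\) of the standard bases \(B_{\mathcal{C}}(\mathbb{R}^N)\) over the N-trees \(N\in\mathcal{F}_N(T)\), and the lifts to \(T\) of the atom bases \(B_{\mathcal{R}}(\mathfrak{A})\) over the S-atoms \(\mathfrak{A}\in\mathcal{F}_A(T)\), the double lift through the containing S-tree collapsing to a single lift by transitivity. Each N-tree is nonsingular, so \(\Rank{N}=\mathbb{R}^N\) and the first block spans \(\bigoplus_{N}\up{\Rank{N}}{$T$}{$N$}\); and since \(B_{\mathcal{R}}(\mathfrak{A})\) is a basis of \(\Rank{\mathfrak{A}}\), the second block spans \(\bigoplus_{\mathfrak{A}\in\mathcal{F}_A(T)}\up{\Rank{\mathfrak{A}}}{$T$}{$\mathfrak{A}$}\). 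As the union of the two blocks is a basis of \(\Rank{T}\), these subspaces sum to \(\Rank{T}\); the sum is direct because the underlying vertex sets are pairwise disjoint---the N-trees partition \(V(\mathcal{F}_N(T))\), the atoms partition \(V(\mathcal{F}_S(T))\), and these two vertex sets are disjoint---so vectors coming from distinct blocks have disjoint supports and cannot cancel. Counting dimensions then gives \(\rank{T}=\sum_{N\in\mathcal{F}_N(T)}\rank{N}+\sum_{\mathfrak{A}\in\mathcal{F}_A(T)}\rank{\mathfrak{A}}\).

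There is no real obstacle: the statement is essentially bookkeeping on top of the S-tree level results. The only two points that genuinely need a sentence each are the transitivity of \(\up{}{}{}\), which turns the nested lifts into single lifts and lets me reindex by \(\mathcal{F}_A(T)\), and the directness of the two sums, which I reduce to the pairwise disjointness of the supporting vertex blocks. I expect the mild care to lie in keeping the two levels of lifting straight; once that is done, the dimension counts are automatic.
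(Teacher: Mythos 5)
Your proposal is correct and takes essentially the paper's own route: the paper states this corollary without proof precisely because it follows, exactly as you argue, by chaining Proposition~\ref{C_null} with the S-atom decompositions and the lifted bases of \(\Rank{T}\) from the preceding corollaries, collapsing the nested lifts and reindexing via \(\mathcal{F}_{A}(T)=\bigcup_{S\in\mathcal{F}_{S}(T)}\mathcal{F}_{A}(S)\). Your two explicit justifications (transitivity of the lift and directness via pairwise disjoint vertex sets) are the only details the paper leaves tacit, and both are handled correctly.
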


\section{S-basic trees}

S-atoms whose core vertices have degree 2 play a key role in order to find \(\{-1,0,1\}\)-bases of the null space of a tree. 
\begin{theorem}
 Let \(\mathfrak{B}\) be an S-atom. The following are equivalent:
 \begin{enumerate}
 	\item \(\Delta_{\text{core}}(\mathfrak{B})=2\).
 	\item \(\supp{}{\mathfrak{B}}=\core{\mathfrak{B}}+1\).
 	\item \(\nulidad{\mathfrak{B}}=1\).
 	\item \(\nu(\mathfrak{B})=\dfrac{n-1}{2} \).
 	\item \(\alpha(\mathfrak{B})=\dfrac{n+1}{2}\). 
 \end{enumerate}
An S-atom that satisfies any (and hence all) of these conditions is called an \textbf{S-basic tree} (or just S-basic).
\end{theorem}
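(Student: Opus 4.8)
The plan is to collapse all five conditions onto the single numerical identity $\supp{}{\mathfrak{B}}=\core{\mathfrak{B}}+1$, treating it as a hub, and to connect the one geometric condition (1) to that hub by an edge count. Write $s:=\supp{}{\mathfrak{B}}$ and $c:=\core{\mathfrak{B}}$. Since $\mathfrak{B}$ is an S-tree, $V(\mathfrak{B})=N[\Supp{}{\mathfrak{B}}]=\Supp{}{\mathfrak{B}}\cup\Core{\mathfrak{B}}$, and these two sets are disjoint because $\Supp{}{\mathfrak{B}}$ is independent (Theorem \ref{S-tree-Independent}); hence $n=s+c$. I would then record the three translations that turn the remaining conditions into arithmetic: $\nulidad{\mathfrak{B}}=s-c$ by Theorem \ref{Trknulloftrees}, $\nu(\mathfrak{B})=c$ by Theorem \ref{MatchingMaximumStrees}, and $\alpha(\mathfrak{B})=s$ by Theorem \ref{S-tree-Independent}.

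With these in hand, conditions (2)--(5) are each a one-line restatement of $s=c+1$: condition (3) reads $s-c=1$; condition (4) reads $c=\tfrac{(s+c)-1}{2}$, i.e.\ $2c=s+c-1$; and condition (5) reads $s=\tfrac{(s+c)+1}{2}$, i.e.\ $2s=s+c+1$. All three simplify to $s=c+1$, which is exactly condition (2). So the entire content of the theorem lies in the equivalence of (1) with (2).

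For that equivalence I would argue by counting edges. First, no core vertex of $\mathfrak{B}$ is pendant: suppose $v\in\Core{\mathfrak{B}}$ had a unique neighbour $u$. Since $\mathfrak{B}$ is $(\Core{\mathfrak{B}},\Supp{}{\mathfrak{B}})$-bipartite, $u\in\Supp{}{\mathfrak{B}}$, so by Lemma \ref{vecsupp} there is $x\in\N{\mathfrak{B}}$ with $x_u\neq 0$; but then $(A(\mathfrak{B})x)_v=x_u\neq 0$, contradicting $A(\mathfrak{B})x=\theta$. Hence every core vertex has degree at least $2$, so $\Delta_{\text{core}}(\mathfrak{B})\geq 2$ always holds. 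Next, since every edge of the bipartite tree $\mathfrak{B}$ has exactly one endpoint in $\Core{\mathfrak{B}}$, we have $e(\mathfrak{B})=\sum_{v\in\Core{\mathfrak{B}}}\deg(v)$, while $e(\mathfrak{B})=n-1=s+c-1$ as $\mathfrak{B}$ is a tree; thus $\sum_{v\in\Core{\mathfrak{B}}}\deg(v)=s+c-1$. Because each of the $c$ summands is at least $2$, this sum equals $2c$ if and only if every core vertex has degree exactly $2$, that is, if and only if $\Delta_{\text{core}}(\mathfrak{B})=2$; and $\sum_{v\in\Core{\mathfrak{B}}}\deg(v)=2c$ is equivalent to $s+c-1=2c$, i.e.\ to $s=c+1$. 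This yields (1) $\Leftrightarrow$ (2) and closes the chain.

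The only genuinely non-arithmetic step is the edge count linking (1) to (2), and within it the observation that core vertices are never pendant; everything else is bookkeeping on top of Theorems \ref{Trknulloftrees}, \ref{S-tree-Independent} and \ref{MatchingMaximumStrees}. I expect the main subtlety to be making the pendant argument airtight, since it is precisely where the S-atom hypothesis (core independent, $V(\mathfrak{B})=\Supp{}{\mathfrak{B}}\cup\Core{\mathfrak{B}}$) is used. As a sanity check, the bound $\nulidad{\mathfrak{B}}\geq\Delta_{\text{core}}(\mathfrak{B})-1$ of Theorem \ref{SAtomTcotanulidad} already delivers the implication (3) $\Rightarrow$ (1) on its own (it forces $\Delta_{\text{core}}(\mathfrak{B})\leq 2$, and $\Delta_{\text{core}}(\mathfrak{B})\geq 2$ always holds), giving an independent route for one direction of the hard equivalence.
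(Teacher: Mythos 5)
Your proof is correct, and it uses the same basic ingredients as the paper's: the edge count \(e(\mathfrak{B})=\sum_{v\in\Core{\mathfrak{B}}}\deg(v)=v(\mathfrak{B})-1\) for the equivalence of (1) and (2), and Theorems \ref{Trknulloftrees}, \ref{MatchingMaximumStrees} and \ref{S-tree-Independent} for the remaining conditions. The logical organization differs: the paper runs the cycle \((1)\Rightarrow(2)\Rightarrow(3)\Rightarrow(1)\), where \((3)\Rightarrow(1)\) is the contrapositive of the nullity bound of Theorem \ref{SAtomTcotanulidad}, and then attaches (4) and (5) through \((2)\Rightarrow(4)\), \((4)\Leftrightarrow(5)\) via \(\alpha(\mathfrak{B})+\nu(\mathfrak{B})=n\), and \((4)\wedge(5)\Rightarrow(3)\) via Corollary \ref{jaumelario}; you instead collapse (3), (4), (5) onto the hub (2) by pure arithmetic in \(\supp{}{\mathfrak{B}}\) and \(\core{\mathfrak{B}}\), and prove \((1)\Leftrightarrow(2)\) in both directions by tightness of the degree sum. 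Your version buys one genuine improvement: the paper's step \((3)\Rightarrow(1)\) only excludes \(\Delta_{\text{core}}(\mathfrak{B})\geq 3\), tacitly assuming \(\Delta_{\text{core}}(\mathfrak{B})\geq 2\); your observation that a core vertex \(v\) cannot be pendant (otherwise \((A(\mathfrak{B})x)_{v}=x_{u}\neq 0\) for a full-support null vector \(x\), which exists by Lemma \ref{vecsupp}) supplies exactly the missing lower bound, and it is precisely where the S-atom hypothesis does real work. One caveat you share with the paper: when \(\Core{\mathfrak{B}}=\emptyset\) (the one-vertex S-atom) conditions (2)--(5) hold while \(\Delta_{\text{core}}(\mathfrak{B})=2\) is undefined, so both arguments implicitly assume \(v(\mathfrak{B})\geq 2\).
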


\begin{proof} \label{SBasicP1}
	\((1 \Rightarrow 2) \) As \(\mathfrak{B}\) is an S-atom and all its core-vertices have degree 2
	\begin{align*}
	2\core{\mathfrak{B}} = & e(\mathfrak{B})\\
	{}= & v(\mathfrak{B})-1\\
	{}= & \core{\mathfrak{B}} + \supp{}{\mathfrak{B}}-1
	\end{align*}
	Then \(\core{\mathfrak{B}}=\supp{}{\mathfrak{B}}-1\).
	
	\((2 \Rightarrow 3 )\)  By Theorem \ref{Trknulloftrees} and (2)  \(\nulidad{\mathfrak{B}}=\supp{}{\mathfrak{B}}-\core{\mathfrak{B}}=1\).
	
	\((3 \Rightarrow 1 )\) If \(\mathfrak{B}\) is and S-tree and it has some core-vertex \(v\) of degree three or more, then, by Theorem \ref{SAtomTcotanulidad}, \(\nulidad{\mathfrak{B}} \geq 2\).
	
	\((2 \Rightarrow 4) \) As \(\mathfrak{B}\) is an S-atom \(n=\core{\mathfrak{B}}+\supp{}{\mathfrak{B}}\), by 2 \(n=2core{\mathfrak{B}}+1\).
	
	\((4 \Leftrightarrow 5) \) As \(\mathfrak{B}\) is an S-tree, \(\alpha(\mathfrak{B})+\nu(\mathfrak{B})=n \).	
	
	\( 4 \text{ and } 5 \Rightarrow 3\) By Corollary \ref{jaumelario}.
\end{proof}
\begin{figure}[h] 
	\centering
	\begin{tikzpicture}[thick,scale=0.2]
	
	\draw 
	(0,0) node[fill=gray]{}
	(5,10) node{} -- (5,5)
	(-5,0) node{} -- (0,0)
	(5,0) node{} -- (0,0)
	(5,5) node[fill=gray]{} -- (5,0)
	(10,0) node[fill=gray]{} -- (5,0)
	(15,0) node{} -- (10,0)
	(20,0) node[fill=gray]{} -- (15,0)
	(25,0) node{} -- (20,0)
	(15,-5) node[fill=gray]{} -- (15,0)
	(10,-5) node{} -- (15,-5)
	(5,-5) node[fill=gray]{} -- (10,-5)
	(0,-5) node{} -- (10,-5)
	(10,-10) node[fill=gray]{} -- (10,-5)
	(10,-15) node{} -- (10,-10);
	
	\end{tikzpicture}
	\caption{An $S$-basic tree $\mathfrak{B}$.}
\end{figure}
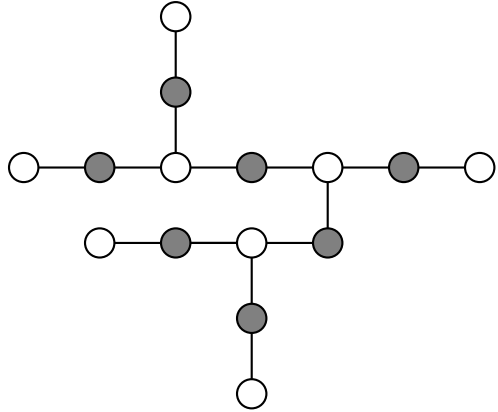
The following Corollary characterizes trees with nullity one.
\begin{corollary}
	A tree \(T\) has nullity one if and only if \(\mathcal{F}_{A}(T)=\{\mathfrak{B}\}\), where \(\mathfrak{B}\) is an S-basic.
\end{corollary}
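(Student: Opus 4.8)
The plan is to combine the additivity of nullity over the A-set with a uniform lower bound of $1$ on the nullity of each S-atom, and then to invoke the equivalence $\nulidad{\mathfrak{B}}=1 \Leftrightarrow \mathfrak{B}$ is S-basic established in the preceding Theorem. Since $\mathcal{F}_A(T)$ is a set of S-atoms and $\nulidad{T}=\sum_{\mathfrak{A}\in\mathcal{F}_A(T)}\nulidad{\mathfrak{A}}$, once I know each summand is at least $1$ the total $\nulidad{T}=1$ can only be realized by a single atom of nullity exactly $1$.

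First I would record that every S-atom $\mathfrak{A}$ occurring in $\mathcal{F}_A(T)$ satisfies $\nulidad{\mathfrak{A}}\geq 1$. The key sub-claim is that every core vertex of an S-atom has degree at least $2$, so that $\Delta_{\text{core}}(\mathfrak{A})\geq 2$. To see this I would pick, by Lemma \ref{vecsupp}, a vector $x\in\N{\mathfrak{A}}$ whose support is all of $\Supp{}{\mathfrak{A}}$; if some core vertex $v$ had degree $1$, its unique neighbour $u$ would lie in $\Supp{}{\mathfrak{A}}$ (core vertices are precisely the neighbours of supported vertices), and the equation $(A(\mathfrak{A})x)_v=0$ would read $x_u=0$, contradicting $x_u\neq 0$. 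With $\Delta_{\text{core}}(\mathfrak{A})\geq 2$ in hand, Theorem \ref{SAtomTcotanulidad} gives $\nulidad{\mathfrak{A}}\geq \Delta_{\text{core}}(\mathfrak{A})-1\geq 1$. Along the way I would note that each atom genuinely carries a core vertex: deleting edges between core vertices never disconnects a supported vertex from its core neighbours, so no atom can be a lone supported vertex, and no atom can be a lone core vertex either.

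Next I would run the counting argument using the displayed identity $\nulidad{T}=\sum_{\mathfrak{A}\in\mathcal{F}_A(T)}\nulidad{\mathfrak{A}}$. For the forward implication, if $\nulidad{T}=1$ then a sum of positive integers equals $1$, which forces $\mathcal{F}_A(T)$ to consist of a single atom $\mathfrak{B}$ with $\nulidad{\mathfrak{B}}=1$; the equivalence in the preceding Theorem then identifies $\mathfrak{B}$ as S-basic. Conversely, if $\mathcal{F}_A(T)=\{\mathfrak{B}\}$ with $\mathfrak{B}$ S-basic, the same equivalence gives $\nulidad{\mathfrak{B}}=1$, and the identity yields $\nulidad{T}=\nulidad{\mathfrak{B}}=1$.

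The main obstacle is the uniform lower bound $\nulidad{\mathfrak{A}}\geq 1$; everything else is bookkeeping with results already proved. In particular the degree-$\geq 2$ property of core vertices, though intuitively clear, has to be argued from a full-support null vector as above, and it is exactly this step that guarantees every atom contributes at least one to the total nullity and thereby makes the counting argument rigid enough to pin down a single S-basic atom.
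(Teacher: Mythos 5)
Your proposal is correct and is essentially the argument the paper intends: the paper states this corollary without proof, as an immediate consequence of the additivity $\nulidad{T}=\sum_{\mathfrak{A}\in\mathcal{F}_{A}(T)}\nulidad{\mathfrak{A}}$ over S-atoms together with the preceding theorem's equivalence between $\nulidad{\mathfrak{B}}=1$ and $\mathfrak{B}$ being S-basic, exactly the two ingredients you combine. One stylistic remark: your detour through core degrees and Theorem \ref{SAtomTcotanulidad} is unnecessary (and, as written, mildly circular, since invoking Lemma \ref{vecsupp} already presupposes a nontrivial null space); every S-atom $\mathfrak{A}$ is an S-tree, so $N[\Supp{}{\mathfrak{A}}]=V(\mathfrak{A})\neq\emptyset$ forces $\Supp{}{\mathfrak{A}}\neq\emptyset$, which yields $\nulidad{\mathfrak{A}}\geq 1$ at once.
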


\begin{theorem}
Let \(\mathfrak{B}\) be a subtree of an S-atom \(\mathfrak{A}\),  with \(v(\mathfrak{A})\geq 3\), such that:
	\begin{enumerate}
		\item if \(v \in V(\mathfrak{B}) \cap \Core{\mathfrak{A}}\), then \(deg_{\mathfrak{B}}(v)=2\),
		\item if \(v \in V(\mathfrak{B}) \cap \Supp{}{\mathfrak{A}}\), then \(N_{\mathfrak{B}}(v) = N_{\mathfrak{A}}(v)\).
	\end{enumerate}
Then \(\mathfrak{B}\) is an S-basic subtree of \(\mathfrak{A}\).
\end{theorem}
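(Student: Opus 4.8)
The plan is to show that the obvious bipartition of \(\mathfrak{B}\) into ``core-type'' and ``support-type'' vertices is exactly the core/support decomposition of \(\mathfrak{B}\) as an S-atom, and that \(\mathfrak{B}\) has nullity one; S-basicness then follows from the characterization theorem above. Write \(C := V(\mathfrak{B}) \cap \Core{\mathfrak{A}}\) and \(P := V(\mathfrak{B}) \cap \Supp{}{\mathfrak{A}}\). Since S-atoms are \((\Core{},\Supp{}{})\)-bipartite, \(\mathfrak{B}\) is bipartite with parts \(C\) and \(P\), every edge joining \(C\) to \(P\). As each \(v \in C\) satisfies \(\deg_{\mathfrak{B}}(v)=2\) by hypothesis (1), counting the edges of \(\mathfrak{B}\) from the \(C\)-side gives \(e(\mathfrak{B}) = 2|C|\); since \(\mathfrak{B}\) is a tree, \(e(\mathfrak{B}) = |C| + |P| - 1\), whence \(|P| = |C| + 1\).

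First I would compute \(\nulidad{\mathfrak{B}}\). Because \(\mathfrak{B}\) is bipartite with parts \(C,P\), the set \(C\) is a vertex cover, so \(\nu(\mathfrak{B}) \le |C|\). For the reverse inequality I verify Hall's condition for \(C\): given \(S \subseteq C\), all \(2|S|\) edges incident with \(S\) run into \(N(S) \subseteq P\) and lie in the subforest \(\mathfrak{B}\langle S \cup N(S)\rangle\), which has at most \(|S| + |N(S)| - 1\) edges; hence \(2|S| \le |S| + |N(S)| - 1\), i.e.\ \(|N(S)| \ge |S| + 1\). Thus some matching saturates \(C\) and \(\nu(\mathfrak{B}) = |C|\). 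Using \(\rank{\mathfrak{B}} = 2\nu(\mathfrak{B})\), this yields \(\nulidad{\mathfrak{B}} = v(\mathfrak{B}) - 2|C| = |P| - |C| = 1\).

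Next I identify \(\Supp{}{\mathfrak{B}}\). Suppressing each degree-\(2\) vertex \(c \in C\) (replacing the path \(p_1 - c - p_2\) through its two \(P\)-neighbors by the pair \(\{p_1,p_2\}\)) produces a graph \(H\) on vertex set \(P\) with \(|C| = |P| - 1\) edges; because \(\mathfrak{B}\) is acyclic, \(H\) has neither cycles nor repeated edges, so \(H\) is a tree and in particular bipartite. Two-coloring \(H\) and setting \(z_p = \pm 1\) accordingly, together with \(z_c = 0\) for \(c \in C\), yields a vector with \(A(\mathfrak{B})z = \theta\) (each core equation reads \(z_{p_1} + z_{p_2} = 0\), each support equation reads \(0\)) and \(\Supp{\mathfrak{B}}{z} = P\). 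As \(\nulidad{\mathfrak{B}} = 1\), this \(z\) spans \(\N{\mathfrak{B}}\), so \(\Supp{}{\mathfrak{B}} = P\); then \(\Core{\mathfrak{B}} = N(P) = C\) and \(N[\Supp{}{\mathfrak{B}}] = N[P] = P \cup C = V(\mathfrak{B})\), so \(\mathfrak{B}\) is an S-tree.

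Finally, since \(\Core{\mathfrak{B}} = C \subseteq \Core{\mathfrak{A}}\) and \(\mathfrak{A}\) is an S-atom, \(\mathfrak{B}\) has no edge between core vertices, so \(\mathfrak{B}\) is an S-atom; with \(\nulidad{\mathfrak{B}} = 1\) (equivalently \(\supp{}{\mathfrak{B}} = \core{\mathfrak{B}} + 1\)), the characterization of S-basic trees gives that \(\mathfrak{B}\) is S-basic. I expect the main obstacle to be the identification of \(\Supp{}{\mathfrak{B}}\): one must guarantee that a null vector can be chosen supported \emph{exactly} on \(P\), and the real work there is checking that the suppressed graph \(H\) is a genuine tree (so that a consistent \(\pm\)-signing exists). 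The Hall-condition step establishing \(\nu(\mathfrak{B}) = |C|\) is the other place where the tree/forest structure is essential, and it is what pins the nullity down to one.
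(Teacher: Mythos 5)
Your proposal is correct, and it reaches S-basicness by a genuinely different route than the paper. The paper's proof is a single construction: choosing a pendant vertex \(h\) of \(\mathfrak{B}\) (which the hypotheses force to lie in \(\Supp{}{\mathfrak{A}}\)), it defines the alternating vector \(\overrightarrow{b}(h)_{v}=(-1)^{d(v,h)/2}\) on vertices at even distance from \(h\) and \(0\) elsewhere, verifies \(A(\mathfrak{B})\overrightarrow{b}(h)=\theta\), and deduces only the \emph{containments} \(V(\mathfrak{B})\cap\Supp{}{\mathfrak{A}}\subseteq\Supp{}{\mathfrak{B}}\) and \(\Core{\mathfrak{B}}\subseteq V(\mathfrak{B})\cap\Core{\mathfrak{A}}\); combined with the closed-neighborhood check \(N_{\mathfrak{B}}[V(\mathfrak{B})\cap\Supp{}{\mathfrak{A}}]=V(\mathfrak{B})\), this shows \(\mathfrak{B}\) is an S-tree with \(\Delta_{\text{core}}(\mathfrak{B})=2\), and S-basicness follows from clause (1) of the characterization theorem. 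You aim instead at clause (3): you first pin down \(\nu(\mathfrak{B})=|C|\) by checking Hall's condition via the forest edge bound, convert that to \(\nulidad{\mathfrak{B}}=1\) using \(\rank{\mathfrak{B}}=2\nu(\mathfrak{B})\), and only then build a null vector supported exactly on \(P\). (Note that your suppressed tree \(H\) and its \(2\)-coloring generate precisely the paper's vector \(\overrightarrow{b}(h)\): the color classes of \(H\) are the parity classes of \(d(\cdot,h)/2\).) Your route costs two extra tools --- Hall's theorem and the tree identity \(\rank{T}=2\nu(T)\) --- and the nullity computation is not strictly necessary, since the containment \(P\subseteq\Supp{}{\mathfrak{B}}\) already yields the S-tree property, and, because an S-tree's vertex set is the disjoint union of its support and its core, it forces \(\Core{\mathfrak{B}}\subseteq C\) and hence \(\Delta_{\text{core}}(\mathfrak{B})=2\). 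In exchange, your argument delivers more: the exact equalities \(\Supp{}{\mathfrak{B}}=P\), \(\Core{\mathfrak{B}}=C\), and \(\nulidad{\mathfrak{B}}=1\), and --- unlike the paper --- you explicitly verify that \(\mathfrak{B}\) is an S-atom (no core--core edges), a hypothesis the characterization theorem formally requires and which the paper's proof leaves implicit.
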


\begin{proof}
	Note that \(V(\mathfrak{B})= (V(\mathfrak{B}) \cap \Core{\mathfrak{A}}) \cup (V(\mathfrak{B}) \cap \Supp{}{\mathfrak{A}})\). Then the hypothesis implies that
	\begin{enumerate}
		\item \(v(\mathfrak{B}) \geq 3\).
		\item \( V(\mathfrak{B}) \cap \Core{\mathfrak{A}} \neq \emptyset\).
		\item \(V(\mathfrak{B}) \cap \Supp{}{\mathfrak{A}} \neq \emptyset\).
	\end{enumerate}
	Let \(h\) be a pendant vertex of \(\mathfrak{B}\). Consider the vector \(\overrightarrow{b}(h)\):
	\[
	\overrightarrow{b}(h)_{v}:=
	\left\lbrace 
	\begin{array}{ll}
	(-1)^{\frac{d(v,h)}{2}} & \text { if } d(i,j) \text{ is even}.\\
	0 & \text{ otherwise.}
	\end{array}
	\right. 
	\]
	Claim 1 \(A(\mathfrak{B})\overrightarrow{b}(h)=\theta\).
	
	Proof of Claim 1: As \(deg_{\mathfrak{B}}(h)=1\), \(h\) is a supported vertex of \(\mathfrak{A}\): \(h \in V(\mathfrak{B} \cap \Supp{}{\mathfrak{A}}\). Therefore, as \(\mathfrak{A}\) is an S-tree, for all \(u \in V(\mathfrak{B})\cap \Supp{}{\mathfrak{A}}\), \(d(h,u)\) is even, and for all \(u \in V(\mathfrak{B})\cap \Core{\mathfrak{A}}\), \(d(h,u)\) is odd. Then for \(u \in V(\mathfrak{B})\cap \Supp{}{\mathfrak{A}}\)
	\begin{equation*}
	\left(
	A(\mathfrak{B})\overrightarrow{b}(h)
	\right)_{u} = \sum_{v \sim u}\overrightarrow{b}(h)_{v}=0
	\end{equation*}
	because all neighbors \(v\) of \(u\) are in \(V(\mathfrak{B})\cap \Core{\mathfrak{A}}\). And for \(u \in V(\mathfrak{B})\cap \Core{\mathfrak{A}}\)
	\begin{equation*}
	\left(
	A(\mathfrak{B})\overrightarrow{b}(h)
	\right)_{u} = \overrightarrow{b}(h)_{v_{1}}+\overrightarrow{b}(h)_{v_{2}}=\pm 1 \mp 1 = 0
	\end{equation*}
	because all \(v\) neighbor of \(u\) is in \(V(\mathfrak{B})\cap \Supp{}{\mathfrak{A}}\).
	
	Claim 2: If \(v \in V(\mathfrak{B}) \cap \Supp{}{\mathfrak{A}}\), then \(\left(\overrightarrow{b}(h)\right)_{v} \neq 0\). Hence \(V(\mathfrak{B}) \cap \Supp{}{\mathfrak{A}} \subset \Supp{}{\mathfrak{B}}\), and then \(\Core{\mathfrak{B}} \subset V(\mathfrak{B}) \cap \Core{\mathfrak{A}}\).
	
	Claim 3: If \(v \in V(\mathfrak{B}) \cap \Core{\mathfrak{A}}\), then \(\left(\overrightarrow{b}(h)\right)_{v}=0\).
%
%
%
	
	Claim 4: \(V(\mathfrak{B})=N_{\mathfrak{B}}[V(\mathfrak{B}) \cap \Supp{}{\mathfrak{A}}]\).
	
	Proof of Claim 4: Let \(u \in V(\mathfrak{B})\), but \(u \notin \Supp{}{\mathfrak{A}}\). Then \(u \in \Core{\mathfrak{A}}\). Thus \(\deg_{\mathfrak{B}(u)=2}\). Therefore there exists \(w \in V(\mathfrak{B} \cap \Supp{}{\mathfrak{A}}\) such that \(w \sim u\). Hence \(u \in N_{\mathfrak{B}}[V(\mathfrak{B}) \cap \Supp{}{\mathfrak{A}}]\). Thus \(V(\mathfrak{B}) \subset N_{\mathfrak{B}}[V(\mathfrak{B}) \cap \Supp{}{\mathfrak{A}}] \subset V(\mathfrak{B}) \).
	
	From the Claim 4 we conclude that \(V(\mathfrak{B})=N_{\mathfrak{B}}[\Supp{}{\mathfrak{B}}]\), thus \(\mathfrak{B}\) is an S-tree. By Claim 2 and the hypothesis \(\Delta_{\text{core}}(\mathfrak{B})=2\). Therefore \(\mathfrak{B}\) is an S-basic. 
\end{proof}

%
%

\begin{proposition}
	Every S-atom \(\mathfrak{A}\) has at least an S-basic subtree.
\end{proposition}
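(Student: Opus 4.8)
The plan is to construct a subtree $\mathfrak{B} \subseteq \mathfrak{A}$ satisfying the two hypotheses of the preceding theorem, and then simply invoke that theorem. I may assume $\mathfrak{A}$ is not the trivial one-vertex tree, so that $\Core{\mathfrak{A}} \neq \emptyset$; since a core vertex is a neighbour of a support vertex in an S-tree, a core vertex of degree one would leave its support neighbour unsupported, so every core vertex has degree at least two. In particular $v(\mathfrak{A}) \geq 3$, exactly the hypothesis that theorem needs.

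I would build $\mathfrak{B}$ by a greedy closure, maintaining the invariant that $\mathfrak{B}$ is a \emph{connected} subtree of $\mathfrak{A}$. As a seed, pick a core vertex $c_0 \in \Core{\mathfrak{A}}$ together with two of its neighbours (these are support vertices, by the bipartition of S-atoms proved earlier); this already gives $c_0$ degree $2$ and $v(\mathfrak{B}) = 3$. Then I repeatedly apply two rules until neither applies. Rule A: if a support vertex $a \in V(\mathfrak{B})$ has a neighbour $c \in N_{\mathfrak{A}}(a) \setminus V(\mathfrak{B})$, adjoin $c$. Rule B: if a core vertex $c \in V(\mathfrak{B})$ has $\deg_{\mathfrak{B}}(c) < 2$, adjoin one support neighbour of $c$ to bring it to degree $2$. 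Each application adds a vertex and $\mathfrak{A}$ is finite, so the process halts; connectivity is preserved because every adjoined vertex is adjacent to one already present. At termination, inapplicability of Rule A is precisely condition (2), and inapplicability of Rule B gives $\deg_{\mathfrak{B}}(c) \geq 2$ for every core vertex of $\mathfrak{B}$.

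The crux is to verify that no core vertex ever exceeds degree $2$, so that condition (1) holds with equality. The load-bearing fact is that in a tree, if both endpoints of an edge lie in a connected subtree then so does the edge (the unique path joining them is that edge). From this I would prove the key claim: whenever a brand-new support vertex is adjoined, it is adjacent to exactly one core vertex already in $\mathfrak{B}$. Indeed, a second old core neighbour $c \neq c'$ would make the path $c$–(new vertex)–$c'$ the unique $\mathfrak{A}$-path between $c$ and $c'$, hence already contained in the connected $\mathfrak{B}$, forcing the ``new'' vertex to be old — a contradiction. Consequently each newly adjoined core vertex enters with degree exactly $1$, Rule B lifts it to exactly $2$ via a single new support neighbour, and no subsequent support vertex can attach to a core vertex already saturated at degree $2$. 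Thus every core vertex of $\mathfrak{B}$ has degree exactly $2$ and every support vertex of $\mathfrak{B}$ carries its full $\mathfrak{A}$-neighbourhood.

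Having verified both conditions and $v(\mathfrak{B}) \geq 3$, the preceding theorem yields that $\mathfrak{B}$ is an S-basic subtree of $\mathfrak{A}$, completing the argument. I expect the main obstacle to be exactly the degree-$2$ bookkeeping under the closure, which is why I would isolate the ``connected subtree contains the joining edge'' fact and the resulting ``a new support vertex meets only one old core vertex'' claim as separate steps; the core/support bipartition of S-atoms established earlier supplies the alternation that makes Rules A and B interleave without conflict.
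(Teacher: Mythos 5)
Your construction is essentially the paper's own proof: the paper proves this proposition via its S-Basic Subtree Algorithm (Algorithm \ref{SBSA}), which grows a connected subtree from a seed vertex by repeatedly adjoining a core vertex adjacent to the current set together with one new support neighbour, exactly your Rules A and B, and then declares the output ``clearly'' S-basic. Your version is correct and in fact supplies the degree-two bookkeeping (the unique-path argument showing each new vertex attaches to exactly one old vertex) and the explicit appeal to the preceding theorem, details the paper leaves implicit.
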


\begin{proof}
The next algorithm builds a S-basic subtree from a given S-atom \(\mathfrak{A}\).
\begin{algorithm}[S-Basic Subtree Algorithm] \label{SBSA}
INPUT: An S-atom \(\mathfrak{A}\), and a vertex \(v\) of \(\mathfrak{A}\).
\begin{enumerate}
	\item \(i=0\).
	\item \(\mathcal{B}_{i}=\{v\}\).
	\item IF \(v \in \Core{\mathfrak{A}}\):
		\begin{enumerate}
			\item CHOSE \(u,w \in N_{\mathfrak{A}}(v)\).
			\item \(\mathcal{B}_{i}=\{u,w\}\).
		\end{enumerate}
	\item WHILE exist \(u \in \Core{\mathfrak{A}} \setminus B_{i} \) such that \(N_{\mathfrak{A}}(u) \cap B_{i} \neq \emptyset\):
		\begin{enumerate}
			\item \(i=i+1\).
			\item CHOSE \(v \in N_{\mathfrak{A}}(u) \setminus B_{i})\).
			\item \(B_{i}=B_{i-1}\cup \{u,v\}\).
		\end{enumerate}
	\item Output: \(\mathfrak{A} \langle B_{i} \rangle\). 
\end{enumerate}
\end{algorithm}
It is clear that the result of the algorithm is an S-basic subtree of \(\mathfrak{A}\). We write \(SB(\mathfrak{A},v)\) for a S-basic subtree of \(\mathfrak{A}\) obtained when we apply the S-Basic subtree algorithm to the S-atom \(\mathfrak{A}\) at vertex \(v\), see Figure 8.
\end{proof}

%
\tikzstyle{every node}=[circle, draw, fill=white!,
inner sep=0.1pt, minimum width=11pt]

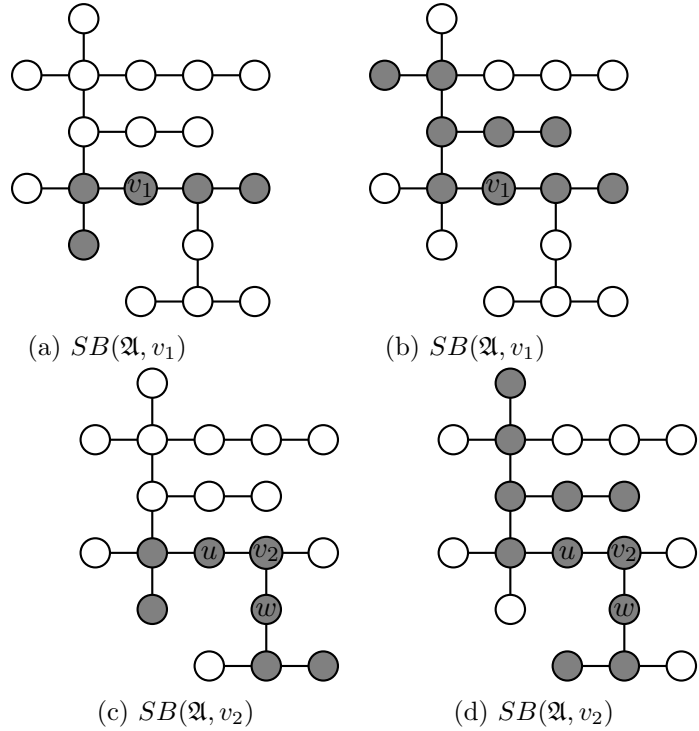
\begin{figure}[h]
	\centering
	\begin{subfigure}[t]{0.2\textwidth}
		\begin{tikzpicture}[thick,scale=0.15]%
		
		\draw
		(0,0) node[fill=gray]{}
		(0,5) node[fill=gray]{} -- (0,0)
		(0,10) node{} -- (0,5)
		(-5,5) node{} -- (0,5)
		(0,15) node{} -- (0,10)
		(-5,15) node{} -- (0,15)
		(0,20) node{} -- (0,15)
		(5,15) node{} -- (0,15)
		(10,15) node{} -- (5,15)
		(15,15) node{} -- (10,15)
		(10,10) node{} -- (5,10)
		(5,10) node{} -- (0,10)
		(5,5) node[fill=gray]{$v_1$} -- (0,5)
		(10,5) node[fill=gray]{} -- (5,5)
		(15,5) node[fill=gray]{} -- (10,5)
		(10,-5) node{} -- (10,0)
		(15,-5) node{} -- (10,-5)
		(5,-5) node{} -- (10,-5)
		(10,0) node{} -- (10,5);
		
		\end{tikzpicture}
		\caption{\(SB(\mathfrak{A},v_1)\)}
	\end{subfigure}
	\hspace{5em}
	\begin{subfigure}[t]{0.2\textwidth}
		\begin{tikzpicture}[thick,scale=0.15]%
		
		\draw
		(0,0) node{}
		(0,5) node[fill=gray]{} -- (0,0)
		(0,10) node[fill=gray]{} -- (0,5)
		(-5,5) node{} -- (0,5)
		(0,15) node[fill=gray]{} -- (0,10)
		(-5,15) node[fill=gray]{} -- (0,15)
		(0,20) node{} -- (0,15)
		(5,15) node{} -- (0,15)
		(10,15) node{} -- (5,15)
		(15,15) node{} -- (10,15)
		(10,10) node[fill=gray]{} -- (5,10)
		(5,10) node[fill=gray]{} -- (0,10)
		(5,5) node[fill=gray]{$v_1$} -- (0,5)
		(10,5) node[fill=gray]{} -- (5,5)
		(15,5) node[fill=gray]{} -- (10,5)
		(10,-5) node{} -- (10,0)
		(15,-5) node{} -- (10,-5)
		(5,-5) node{} -- (10,-5)
		(10,0) node{} -- (10,5);

		\end{tikzpicture}
		\caption{\(SB(\mathfrak{A},v_1)\)}
	\end{subfigure}
	\newline
	\begin{subfigure}[t]{0.2\textwidth}
		\begin{tikzpicture}[thick,scale=0.15]%
		
		\draw
		(0,0) node[fill=gray]{}
		(0,5) node[fill=gray]{} -- (0,0)
		(0,10) node{} -- (0,5)
		(-5,5) node{} -- (0,5)
		(0,15) node{} -- (0,10)
		(-5,15) node{} -- (0,15)
		(0,20) node{} -- (0,15)
		(5,15) node{} -- (0,15)
		(10,15) node{} -- (5,15)
		(15,15) node{} -- (10,15)
		(10,10) node{} -- (5,10)
		(5,10) node{} -- (0,10)
		(5,5) node[fill=gray]{$u$} -- (0,5)
		(10,5) node[fill=gray]{$v_2$} -- (5,5)
		(15,5) node{} -- (10,5)
		(10,-5) node[fill=gray]{} -- (10,0)
		(15,-5) node[fill=gray]{} -- (10,-5)
		(5,-5) node{} -- (10,-5)
		(10,0) node[fill=gray]{$w$} -- (10,5);

		\end{tikzpicture}
		\caption{\(SB(\mathfrak{A},v_2)\)}
	\end{subfigure}
	\hspace{5em}
	\begin{subfigure}[t]{0.2\textwidth}
		\begin{tikzpicture}[thick,scale=0.15]%
		
		\draw
		(0,0) node{}
		(0,5) node[fill=gray]{} -- (0,0)
		(0,10) node[fill=gray]{} -- (0,5)
		(-5,5) node{} -- (0,5)
		(0,15) node[fill=gray]{} -- (0,10)
		(-5,15) node{} -- (0,15)
		(0,20) node[fill=gray]{} -- (0,15)
		(5,15) node{} -- (0,15)
		(10,15) node{} -- (5,15)
		(15,15) node{} -- (10,15)
		(10,10) node[fill=gray]{} -- (5,10)
		(5,10) node[fill=gray]{} -- (0,10)
		(5,5) node[fill=gray]{$u$} -- (0,5)
		(10,5) node[fill=gray]{$v_2$} -- (5,5)
		(15,5) node{} -- (10,5)
		(10,-5) node[fill=gray]{} -- (10,0)
		(15,-5) node{} -- (10,-5)
		(5,-5) node[fill=gray]{} -- (10,-5)
		(10,0) node[fill=gray]{$w$} -- (10,5);

		\end{tikzpicture}
		\caption{\(SB(\mathfrak{A},v_2)\)}
	\end{subfigure}\label{Fig_S-basic_Subtree_Algrotihm}
	\caption{Some possible results of S-Basic Subtree Algorithm for the tree \(T\).}
\end{figure}
%

%
Let  \(\mathfrak{A}\) be an S-atom of order \(n\), and  let \(\mathfrak{B}\) be  an S-basic subtree of \(\mathfrak{A}\) of order \(m\). Let \(h\) be a pendant vertex of \(\mathfrak{B}\), the \textbf{basic vector} associated to \(\mathfrak{B}\) and \(h\), denoted by \(\overrightarrow{\mathfrak{B}(h)}\), is a vector in \( \mathbb{R}^{\mathfrak{B}}\), whose coordinates are indexed as follow: for each \(v \in V(\mathfrak{B})\)
\[
\overrightarrow{\mathfrak{B}(h)}_{v}:=
\left\lbrace 
\begin{array}{ll}
(-1)^{\frac{d(v,h)}{2}} & \text { if } d(i,j) \text{ is even}.\\
0 & \text{ otherwise.}
\end{array}
\right. 
\]
As the actual pendant vertex chosen is immaterial: let  \(u,v \in \Supp{}{\mathfrak{A}}\), then \(\overrightarrow{\mathfrak{B}(u)}=\pm \overrightarrow{\mathfrak{B}(v)}\). Thus we just write \(\overrightarrow{\mathfrak{B}}\).

\section{Null space of a tree}

%
%

%
%

Finding bases of the null space of a tree is a more involved task. We give an algorithm in order to obtain an appropriated basis for the null space of an S-atom.


Let \(U\) and \(V\) be two disjoin subtrees of a tree \(T\). The \textbf{in} of \(U\) from \(V\), denoted by \( \entrada{U}{V} \), is the vertex of \(U\) closest to \(V\). The \textbf{out} of \(U\) toward \(V\), denoted by \(\salida{U}{V}\), is the first vertex out of \(U\) toward \(V\).


%
\tikzstyle{every path}=[%
draw,%
thick,
rounded corners	=10pt,
color			=black,
line width		=1pt%
]
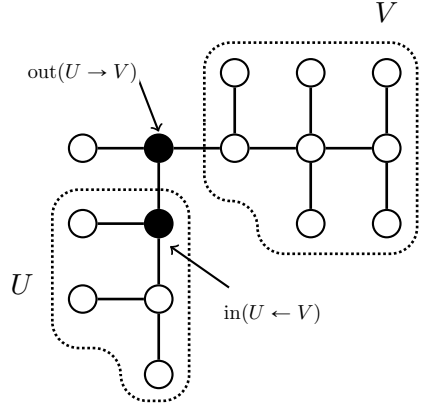
\begin{figure}[h]
	\centering
	\begin{tikzpicture}%
	\begin{scope}[%
	shift	={(0, -1)},
	rotate	={0},
	scale	={1.0}]
	\tikzstyle{every node}=[%
	draw,%
	shape		=circle,%
	color		=black,%
	thick
	]
	\node[name=01,	at={(-1, 0)},	node contents={}];
	\node[name=02,	at={(0, 0)},	fill=black,	node contents={}];
	\node[name=03,	at={(-1, -1)},	node contents={}];
	\node[name=04,	at={(0, -1)},	node contents={}];
	\node[name=05,	at={(0, -2)},	node contents={}];
	\path[draw] (02) to (01);
	\path[draw] (02) to (04);
	\path[draw] (04) to (03);
	\path[draw] (04) to (05);
	\end{scope}
	\begin{scope}[%
	shift	={(0.4, -0.55)},
	rotate	={0},
	xscale	=1.8,
	yscale	=1.4]
	\path[densely dotted] (0, 0) -- (0, -2) -- (-0.5, -2) -- (-0.5, -1.5) -- (-1, -1.5) -- (-1, -1) -- (-1, 0) -- cycle;
	\end{scope}
	\begin{scope}[%
	shift	={(0, 0)},
	rotate	={0},
	scale	={1.0}]
	\tikzstyle{every node}=[%
	draw,%
	shape		=circle,%
	color		=black,%
	thick
	]
	\node[name=06,	at={(0, 0)},	fill=black,	node contents={}];
	\node[name=07,	at={(-1, 0)},	node contents={}];
	\path[draw] (06) to (07);
	\end{scope}
	\begin{scope}[%
	shift	={(1, 0)},
	rotate	={0},
	scale	={1.0}]
	\tikzstyle{every node}=[%
	draw,%
	shape		=circle,%
	color		=black,%
	thick
	]
	\node[name=08,	at={(0, 0)},	node contents={}];
	\node[name=09,	at={(0, 1)},	node contents={}];
	\node[name=10,	at={(1, 0)},	node contents={}];
	\node[name=11,	at={(1, 1)},	node contents={}];
	\node[name=12,	at={(1, -1)},	node contents={}];
	\node[name=13,	at={(2, 0)},	node contents={}];
	\node[name=14,	at={(2, 1)},	node contents={}];
	\node[name=15,	at={(2, -1)},	node contents={}];
	\path[draw] (08) to (09);
	\path[draw] (08) to (10);
	\path[draw] (10) to (11);
	\path[draw] (10) to (12);
	\path[draw] (10) to (13);
	\path[draw] (13) to (14);
	\path[draw] (13) to (15);
	\end{scope}
	\begin{scope}[%
	shift	={(0.6, 0)},
	rotate	={0},
	xscale	=1.4,
	yscale	=1.4]
	\path[densely dotted] (0, 0) -- (0, 1) -- (1, 1) -- (2, 1) -- (2, -1) -- (0.5, -1) -- (0.5, -0.5) -- (0, -0.5) -- cycle;
	\end{scope}
	\path[draw] (02) to (06);
	\path[draw] (08) to (06);
	\draw[->, line width=0.8pt] (-0.3,1) to (0, 0.22);
	\draw[->, line width=0.8pt] (1.0, -1.9) to (0.15, -1.3);
	\draw (-1.8, -1.8) node[white]{\textcolor{black}{$U$}};
	\draw (3, 1.8) node[white]{\textcolor{black}{$V$}};
	\draw (1.5, -2.2) node[scale=0.7,white]{\textcolor{black}{in$(U \leftarrow V)$}};
	\draw (-1.0, 1)node[scale=0.7,white]{\textcolor{black}{out$(U \rightarrow V)$}};
	\end{tikzpicture}
	\caption{The in and out of a subgraph \( U \) in respect to the subgraph \( V \).}
	\label{def_in_out}
\end{figure}

\begin{algorithm} [S-Basis Forest Algorithm]
	INPUT: An S-atom \(\mathfrak{A}\).
	\begin{enumerate}
		\item Forest \(\mathcal{F}=\{\mathfrak{A}\}\).
		\item \(i=1\).
		\item Basic Forest \(\mathcal{F}_{B}(\mathfrak{A})=\emptyset\).
		\item Used vertices \(U=\emptyset\).
		\item \(MC=\emptyset\) an empty matrix whose column are indexed by the vertices of \(\mathfrak{A}\), and the row will be indexed by a set of S-basic trees that will be generated.
		\item WHILE \(\mathcal{F} \neq \emptyset \) then:
		\begin{enumerate}
			\item CHOSE \(H \in \mathcal{F}\)
			\begin{enumerate}
				\item \(j=i\).
				\item \(B_{i}=SBSA(H)\).
				\item \(\mathcal{F}_{B}(\mathfrak{A})=\mathcal{F}_{B}(\mathfrak{A}) \cup B_{i}\).
				\item \(U=V(B_{i})\).
				\item Calculate the \(B_{i}\)-row of \(MC\):
				\[
				MC_{B_{i},v}=
					\left\lbrace 
					\begin{array}{ll}
					(-1) & \text{ if } v \in \Core{B_{i}}\setminus U.\\
					1 & \text{ if } v \in \Supp{}{B_{i}}\setminus U.\\
					0  & \text{ otherwise.}
					\end{array}
					\right.
				\]
			\end{enumerate}
			\item WHILE exist \(v \in  \Supp{}{H}\setminus U\) such that \(deg_{H}(v)=1\) and exist \(l(v) \in \Supp{}{B_{i}}\) such that \(deg_{H}(l(v))=1\) and \(d(v,l(v))=2\):
			\begin{enumerate}
				\item \(i=i+1\).
				\item \(B_{i}=B_{i-1}-l(v)+v\).
				\item \(\mathcal{F}_{B}(\mathfrak{A})=\mathcal{F}_{B}(\mathfrak{A}) \cup \{B_{i}\}\).
				\item \(U=U \cup \{v\}\).
				\item Calculate the \(B_{i}\) row of \(MC\):
				\[
				MC_{B_{i},u}=
					\left\lbrace 
					\begin{array}{ll}
					1 & \text{ if } u=v. \\
					0  & \text{ otherwise.}
					\end{array}
					\right.
				\]
			\end{enumerate}
			\item WHILE exist \(v \in  \Supp{}{H}\setminus U\) such that \(deg_{H}(v)=1\) and exist \(x \in N_{T}(v) \cap \Core{B_{j}}\):
			\begin{enumerate}
				\item CHOSE \(w \in N_{B_{j}}(x)\).
				\item \(i=i+1\).
				\item \(B_{i}=B_{j}(x \rightarrow w)+v\).
				\item \(\mathcal{F}_{B}(\mathfrak{A})=\mathcal{F}_{B}(\mathfrak{A}) \cup \{B_{i}\}\).
				\item \(U=U \cup \{v\}\).
				\item Calculate the \(B_{i}\) row of \(MC\):
				\[
				MC_{B_{i},u}=
					\left\lbrace 
					\begin{array}{ll}
					1 & \text{ if } u=v.\\
					0  & \text{ otherwise.}
					\end{array}
					\right.
				\]
			\end{enumerate}
			\item Forest 1: \(\mathcal{F}1= H -  U\).
			\item Forest 2: \(\mathcal{F}2= \emptyset\)
			\item FOR EACH \(G \in \mathcal{F}1\):
			\begin{enumerate}
				\item Find \(\entrada{G}{B_{j}}\) and \(\salida{G}{B_{j}}\).
				\item Chose \(x \in N_{B_{j}}(\salida{G}{B_{j}}) \).
				\item \(G=G+B_{j}(x \rightarrow \salida{G}{B_{j}})\).
				\item \(\mathcal{F}2= \mathcal{F}2 \cup \{G\}\).
			\end{enumerate}
			\item \(\mathcal{F}= \mathcal{F}2 \cup \left(\mathcal{F} \setminus {H} \right) \).
		\end{enumerate}
		\item OUTPUT: \(\mathcal{F}_{B}(\mathfrak{A})\).
	\end{enumerate}
\end{algorithm}
It is clear that the result of the algorithm is a forest of S-basic subtrees of \(\mathfrak{A}\). We write \(\mathcal{F}_{B}(\mathfrak{A})\) for the forest of S-basic subtrees of \(\mathfrak{A}\) that is obtained when we apply the S-basis forest algorithm to the S-atom \(\mathfrak{A}\).

The matrix \(MC\) was built in order to prove the following result:
\begin{lemma} \label{NullLemma}
	Let \(\mathfrak{A}\) be an S-atom 
	. Then
	\[
	|\mathcal{F}_{B}(\mathfrak{A})|=\supp{}{\mathfrak{A}}-\core{\mathfrak{A}}
	\]
\end{lemma}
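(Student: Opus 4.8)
The plan is to double-count the entries of the matrix $MC$ produced by the algorithm. By construction $MC$ has exactly one row per S-basic subtree placed in $\mathcal{F}_{B}(\mathfrak{A})$, so its number of rows equals $|\mathcal{F}_{B}(\mathfrak{A})|$, while its columns are indexed by $V(\mathfrak{A})$. I would compute the sum of all entries of $MC$ in two ways: summing by rows will give $|\mathcal{F}_{B}(\mathfrak{A})|$, and summing by columns will give $\supp{}{\mathfrak{A}}-\core{\mathfrak{A}}$. Since an S-atom is in particular a tree, Theorem \ref{Trknulloftrees} identifies this last quantity with $\nulidad{\mathfrak{A}}$, so the two totals together yield the claimed equality.

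For the column sum, the key observation is that a vertex $v$ receives a nonzero entry in a row only at the instant it first enters the set $U$ of used vertices: every entry rule contains an \emph{otherwise $0$} clause together with a restriction to the complement of $U$, so once $v\in U$ it is assigned $0$ in every later row. Granting that the algorithm terminates with $U=V(\mathfrak{A})$ and marks no vertex twice, each column thus has a single nonzero entry, equal to $+1$ when $v\in\Supp{}{\mathfrak{A}}$ and to $-1$ when $v\in\Core{\mathfrak{A}}$; indeed a core vertex can only be marked inside a base row, where core vertices carry $-1$, since the rows generated by the two inner \textbf{while} loops mark only a single support vertex. Summing by columns then gives
\[
\sum_{v\in V(\mathfrak{A})}(\text{column sum of }v)=\supp{}{\mathfrak{A}}-\core{\mathfrak{A}}.
\]

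Next I would show that every row of $MC$ sums to $1$. For a row produced in an inner \textbf{while} loop this is immediate, as it has a single nonzero entry $+1$ at the freshly used support vertex. For a base row, coming from applying $SBSA$ to some $H\in\mathcal{F}$, the row carries $-1$ on the newly used core vertices and $+1$ on the newly used support vertices, so its sum equals the number of new support vertices minus the number of new core vertices. The underlying subtree is S-basic, hence satisfies $\supp{}{\cdot}-\core{\cdot}=1$ over all of its vertices; the point is therefore to check that the \emph{already used} part shared with the earlier subtrees contributes equally many core and support vertices. That shared part is precisely the piece $B_{j}(x\rightarrow\salida{G}{B_{j}})$ glued on in the previous pass: it is a subtree of an S-basic tree rooted at the core vertex $\salida{G}{B_{j}}$, and since in an S-basic tree every core vertex has degree $2$, sending each core to its unique child support and each support to its parent core is a bijection between the core and support vertices of that piece. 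Hence the overlap is balanced, the difference of new support and new core vertices equals $1$, and the base row also sums to $1$.

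Summing by rows then gives $|\mathcal{F}_{B}(\mathfrak{A})|$, and comparing the two totals proves the lemma. The main obstacle is the bookkeeping behind the base rows: one must verify that the vertex $\salida{G}{B_{j}}$ glued to each new component is indeed a core vertex, so that the rooted overlap is balanced, and that the splitting step $\mathcal{F}1=H-U$ together with the re-gluing covers every vertex of $\mathfrak{A}$ exactly once. Both should follow from the invariant, maintained throughout the main loop, that each $SBSA$-subtree contains all core neighbours of each of its support vertices, so that the boundary between used and unused vertices always runs along a support-to-core edge; making this invariant precise is the technical heart of the argument.
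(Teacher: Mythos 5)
Your proposal is correct and is essentially the paper's own argument: the published proof consists of the single sentence ``Just double counting over the entries of \(MC\),'' which is exactly the row-versus-column count you carry out (each row summing to \(1\), each column contributing \(+1\) for a support vertex and \(-1\) for a core vertex, giving \(|\mathcal{F}_{B}(\mathfrak{A})| = \supp{}{\mathfrak{A}}-\core{\mathfrak{A}}\)). Your elaboration of the base-row balance via the glued piece \(B_{j}(x \rightarrow \salida{G}{B_{j}})\), and your explicit flagging of the algorithm's invariants as the technical heart, goes beyond the level of detail the paper provides, but it is the same proof.
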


\begin{proof}
	Just double counting over the entries of \(MC\).
\end{proof}

\begin{theorem}
	Let \(\mathfrak{A}\) be an S-atom. The set
	\[
	\overrightarrow{\mathcal{F}_{B}(\mathfrak{A})}:=\{\up{\overrightarrow{\mathfrak{B}}}{$\mathfrak{A}$}{$\mathfrak{B}$} \; : \mathfrak{B} \in \mathcal{F}_{B}(\mathfrak{A}) \}
	\]
	called a \textbf{forest basis} of \(\mathfrak{A}\), is a base of \(\N{\mathfrak{A}}\).
\end{theorem}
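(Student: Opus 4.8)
The plan is to establish the three defining properties of a basis: that every member of $\overrightarrow{\mathcal{F}_B(\mathfrak{A})}$ lies in $\N{\mathfrak{A}}$, that the family is linearly independent, and that it has the right cardinality. The count is immediate and I would dispatch it first: by Lemma \ref{NullLemma} we have $|\mathcal{F}_B(\mathfrak{A})| = \supp{}{\mathfrak{A}} - \core{\mathfrak{A}}$, and by Theorem \ref{Trknulloftrees} this equals $\nulidad{\mathfrak{A}} = \dim \N{\mathfrak{A}}$. Hence, once membership and independence are settled, the family is forced to be a basis.

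For membership, fix $\mathfrak{B} \in \mathcal{F}_B(\mathfrak{A})$, which is an S-basic subtree of $\mathfrak{A}$ and so satisfies conditions $1$ and $2$ of the S-basic-subtree theorem. I would use, from that theorem, that $A(\mathfrak{B})\overrightarrow{\mathfrak{B}} = \theta$ and that $\overrightarrow{\mathfrak{B}}$ is nonzero exactly on $V(\mathfrak{B}) \cap \Supp{}{\mathfrak{A}}$ and vanishes on every core vertex. I then evaluate $(A(\mathfrak{A})\up{\overrightarrow{\mathfrak{B}}}{$\mathfrak{A}$}{$\mathfrak{B}$})_u$ for each $u \in V(\mathfrak{A})$ in three cases. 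If $u \in \Supp{}{\mathfrak{A}}$, all its neighbours are core vertices, where the lifted vector is $0$. If $u \in \Core{\mathfrak{A}} \cap V(\mathfrak{B})$, then $\deg_{\mathfrak{B}}(u)=2$ and its two $\mathfrak{B}$-neighbours reproduce $(A(\mathfrak{B})\overrightarrow{\mathfrak{B}})_u = 0$, while any further $\mathfrak{A}$-neighbours lie outside $\mathfrak{B}$ and contribute $0$. The decisive case is $u \in \Core{\mathfrak{A}} \setminus V(\mathfrak{B})$: here condition $2$ is exactly what is needed, since a support neighbour $w \in V(\mathfrak{B})$ of $u$ would satisfy $N_{\mathfrak{A}}(w) = N_{\mathfrak{B}}(w) \subseteq V(\mathfrak{B})$, forcing $u \in V(\mathfrak{B})$, a contradiction; so $u$ has no neighbour in the support of $\overrightarrow{\mathfrak{B}}$ and the coordinate is $0$. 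Thus $\up{\overrightarrow{\mathfrak{B}}}{$\mathfrak{A}$}{$\mathfrak{B}$} \in \N{\mathfrak{A}}$.

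Linear independence is the main obstacle, and I would resolve it through the bookkeeping encoded in the matrix $MC$. List the basic trees $B_1, \dots, B_N$ in the order the algorithm produces them, and let $U_i := V(B_1) \cup \dots \cup V(B_{i-1})$ be the cumulative set of vertices used before $B_i$ is formed. The key point is that each creation step records a support vertex $v_i \notin U_i$: for a swap tree it is the newly inserted leaf, for the first tree of a component it is one of its newly marked support vertices, and these are precisely the $+1$ entries of $MC$. Since every support vertex of $\mathfrak{A}$ is recorded exactly once across the rows of $MC$, the $v_i$ are pairwise distinct. Because $v_i \notin U_i$ while $V(B_{i'}) \subseteq U_i$ for $i' < i$, we get $v_i \notin V(B_{i'})$, hence $(\overrightarrow{B_{i'}})_{v_i} = 0$ for all $i' < i$; meanwhile $v_i \in V(B_i) \cap \Supp{}{\mathfrak{A}}$ gives $(\overrightarrow{B_i})_{v_i} \neq 0$. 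Consequently the square matrix $\big[(\overrightarrow{B_a})_{v_b}\big]_{a,b}$ is triangular with nonzero diagonal, hence nonsingular, and the vectors $\up{\overrightarrow{B_i}}{$\mathfrak{A}$}{$B_i$}$ are linearly independent.

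Putting the pieces together, $\overrightarrow{\mathcal{F}_B(\mathfrak{A})}$ is a linearly independent family of $\nulidad{\mathfrak{A}}$ vectors lying in $\N{\mathfrak{A}}$, so it is a basis. The one delicate point to nail down carefully is the ``fresh pivot'' property underlying the triangularity, namely that the first tree of each processed component really introduces at least one previously unused support vertex; I expect to argue this from the fact that each newly spawned component $H-U$ is nonempty and, being a piece of the S-atom $\mathfrak{A}$, necessarily carries support vertices of $\mathfrak{A}$ not yet marked in $MC$.
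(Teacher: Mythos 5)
Your proposal is correct and takes essentially the same approach as the paper: each lifted vector lies in \(\N{\mathfrak{A}}\), the family is linearly independent, and Lemma \ref{NullLemma} together with Theorem \ref{Trknulloftrees} forces the cardinality to equal \(\nulidad{\mathfrak{A}}\). The paper's own proof simply asserts the first two steps (independence is called ``clear'' and the null-space membership is stated without verification), so your bipartite case analysis for membership and the \(MC\)-based fresh-pivot triangularity argument for independence supply precisely the details the paper leaves implicit.
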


\begin{proof}
	Clearly \(\overrightarrow{\mathcal{F}_{B}(\mathfrak{A})}\) is a nonempty set of  linearly independent vectors. Besides, given  \(\overrightarrow{\mathfrak{B}}\) the vector associated to \(\mathfrak{B} \in \mathcal{F}_{B}(\mathfrak{A})\)
\[
A(\mathfrak{A})\up{\overrightarrow{\mathfrak{B}}}{$\mathfrak{A}$}{$\mathfrak{B}$}=\theta
\]
Hence, by Lemma \ref{NullLemma}, \(\overrightarrow{\mathcal{F}_{B}(\mathfrak{A})}\) is a basis of \(\N{\mathfrak{A}}\).
\end{proof}

Each vector \(\overrightarrow{\mathfrak{B}} \in \overrightarrow{\mathcal{F}_{B}(\mathfrak{A})}\) is associated to some S-basic subtree \(\mathfrak{B} \leq \mathfrak{A}\), we will say that \(\overrightarrow{\mathfrak{B}}\) is a \textbf{S-basic vector of} \(\mathfrak{A}\).

\begin{corollary}
	Let \(S\) be an S-tree. Then
	\[
	\up{\bigcup_{\mathfrak{A} \in \mathcal{F}_{A}(S)} \overrightarrow{\mathcal{F}_{B}(\mathfrak{A})}}{S}{$\mathfrak{A}$}
	\]
	is a base of \(\N{S}\).
\end{corollary}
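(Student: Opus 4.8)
The plan is to read this corollary as a direct-sum bookkeeping statement and to lean on the two results already in hand: the decomposition \(\N{S} = \bigoplus_{\mathfrak{A} \in \mathcal{F}_{A}(S)} \up{\N{\mathfrak{A}}}{$S$}{$\mathfrak{A}$}\) and the theorem asserting that, for each S-atom \(\mathfrak{A}\), the forest basis \(\overrightarrow{\mathcal{F}_{B}(\mathfrak{A})}\) is a basis of \(\N{\mathfrak{A}}\). Since a basis of a direct sum is obtained by collecting bases of the summands, the only things to verify are that lifting each \(\overrightarrow{\mathcal{F}_{B}(\mathfrak{A})}\) from \(\mathbb{R}^{\mathfrak{A}}\) into \(\mathbb{R}^{S}\) yields a basis of the corresponding summand \(\up{\N{\mathfrak{A}}}{$S$}{$\mathfrak{A}$}\), and that the union of these lifts over all atoms is linearly independent.

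First I would record that the A-set partitions \(V(S)\): the atoms are the connected components obtained after deleting the bond edges, and deleting edges removes no vertices, so each vertex of \(S\) lies in exactly one \(\mathfrak{A} \in \mathcal{F}_{A}(S)\). Next, for a fixed atom the lift \(\mathbb{R}^{\mathfrak{A}} \to \mathbb{R}^{S}\) is linear and injective — it merely pads the coordinates outside \(V(\mathfrak{A})\) with zeros — hence it carries the basis \(\overrightarrow{\mathcal{F}_{B}(\mathfrak{A})}\) of \(\N{\mathfrak{A}}\) to a linearly independent generating set of \(\up{\N{\mathfrak{A}}}{$S$}{$\mathfrak{A}$}\). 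Taking the union over \(\mathfrak{A} \in \mathcal{F}_{A}(S)\), the resulting family spans \(\N{S}\) by the direct-sum decomposition, and it is linearly independent: vectors coming from different atoms have disjoint supports (by the partition of \(V(S)\)), while vectors coming from a single atom are independent by the forest-basis property. A cardinality count then closes the argument, since the family has \(\sum_{\mathfrak{A}} |\overrightarrow{\mathcal{F}_{B}(\mathfrak{A})}| = \sum_{\mathfrak{A}} \nulidad{\mathfrak{A}} = \nulidad{S}\) members, the last equality being the nullity-additivity over atoms.

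There is no genuine obstacle here; the mathematical content is entirely in the two cited results. The one point that repays care is the nested lift appearing in the statement: each element of \(\overrightarrow{\mathcal{F}_{B}(\mathfrak{A})}\) is itself a lift \(\up{\overrightarrow{\mathfrak{B}}}{$\mathfrak{A}$}{$\mathfrak{B}$}\) of a basic vector of an S-basic subtree \(\mathfrak{B} \leq \mathfrak{A}\), so the object in the corollary is a double lift which, by the evident composition rule for lifts along \(\mathfrak{B} \leq \mathfrak{A} \leq S\), equals \(\up{\overrightarrow{\mathfrak{B}}}{$S$}{$\mathfrak{B}$}\). Making this identification explicit keeps the supports transparent, so that both the spanning and the independence claims reduce cleanly to the two facts cited above.
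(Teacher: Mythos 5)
Your proposal is correct and takes essentially the same route the paper intends: the corollary is stated without proof because it follows immediately by combining the theorem that each forest basis \(\overrightarrow{\mathcal{F}_{B}(\mathfrak{A})}\) is a basis of \(\N{\mathfrak{A}}\) with the atom-wise decomposition \(\N{S} = \bigoplus_{\mathfrak{A} \in \mathcal{F}_{A}(S)} \up{\N{\mathfrak{A}}}{$S$}{$\mathfrak{A}$}\), exactly the two results you cite. Your additional bookkeeping (vertex-disjointness of the atoms, injectivity and composition of the lifts, and the count \(\sum_{\mathfrak{A}} \nulidad{\mathfrak{A}} = \nulidad{S}\)) merely makes explicit what the paper leaves implicit, so there is nothing to correct.
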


\begin{corollary}
	Let \(T\) be a tree. Then
	\[
	\up{\bigcup_{S \in \mathcal{F}_{S}(T)} \left(
	\up{\bigcup_{\mathfrak{A} \in \mathcal{F}_{A}(S)} \overrightarrow{\mathcal{F}_{B}(\mathfrak{A})}}{S}{$\mathfrak{A}$}
	\right) }{T}{S}
	\]
	is a base of \(\N{T}\).
\end{corollary}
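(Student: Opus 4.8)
The plan is to obtain the claimed basis of \(\N{T}\) by gluing together, over the S-trees of \(T\), the bases of the individual null spaces \(\N{S}\) produced in the preceding corollary, and to control the gluing with the direct sum decomposition recorded in Proposition \ref{C_null}.

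First I would invoke Proposition \ref{C_null}, which gives
\[
\N{T}=\bigoplus_{S \in \mathcal{F}_{S}(T)} \up{\N{S}}{$T$}{$S$}.
\]
Thus it is enough to exhibit a basis of each summand and to check that the union of these bases is a basis of the whole direct sum. For each fixed \(S\in\mathcal{F}_S(T)\), the corollary immediately preceding this one states that
\[
\up{\bigcup_{\mathfrak{A} \in \mathcal{F}_{A}(S)} \overrightarrow{\mathcal{F}_{B}(\mathfrak{A})}}{S}{$\mathfrak{A}$}
\]
is a basis of \(\N{S}\); call it \(\mathcal{B}_S\).

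Next I would push each \(\mathcal{B}_S\) up to \(T\). The lift \(\up{\,\cdot\,}{T}{S}\colon \mathbb{R}^{S}\to\mathbb{R}^{T}\) is linear and injective, since it only adjoins zero coordinates on \(V(T)\setminus V(S)\); consequently it carries the basis \(\mathcal{B}_S\) of the subspace \(\N{S}\subseteq\mathbb{R}^{S}\) to a linearly independent family whose span is exactly \(\up{\N{S}}{T}{S}\), i.e.\ to a basis of that summand. Moreover, because the inclusions \(\mathfrak{A}\le S\le T\) make the two successive lifts compose, the set \(\up{\mathcal{B}_S}{T}{S}\) coincides with the \(S\)-indexed block of the family displayed in the statement.

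Finally I would assemble the blocks. Distinct S-trees of \(\mathcal{F}_S(T)\) occupy disjoint vertex sets of \(T\), so vectors arising from different summands have disjoint supports; hence the union \(\bigcup_{S}\up{\mathcal{B}_S}{T}{S}\) is linearly independent and its span is \(\bigoplus_{S}\up{\N{S}}{T}{S}=\N{T}\). This union is precisely the displayed set, which is therefore a basis of \(\N{T}\). There is no real obstacle here: the substance of the argument already resides in the forest-basis theorem for S-atoms and in the two preceding corollaries, and all that remains is the routine verification that lifting a basis of a direct summand yields a basis of that summand, together with the compatibility of the iterated lift.
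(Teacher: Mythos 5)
Your proposal is correct and follows essentially the same route the paper intends: the corollary is stated there as an immediate consequence of Proposition \ref{C_null} (the decomposition \(\N{T}=\bigoplus_{S \in \mathcal{F}_{S}(T)} \up{\N{S}}{$T$}{$S$}\)) together with the preceding corollary giving a basis of each \(\N{S}\), which is exactly the argument you spell out. Your explicit verification that the lift is injective, that iterated lifts compose, and that vectors coming from distinct S-trees have disjoint supports supplies the routine details the paper leaves implicit.
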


Thus
\[
\{e_{2}-e_{3}, e_{10}-e_{11}, e_{10}-e_{12},e_{6}-e_{7}+e_{8}\}
\]
is a basis of \(\N{T}\) for the tree \(T\) in Figure 2.

\section*{Acknowledgement}
The authors are grateful indebted to Professor Juan Manuel Alonso (UNSL) for their active interest in the publication of this paper. Even though the text does not reflect it, we carry on many numerical experiments on \cite{sage}. They gives us the insight for this paper. Aus dem Paradies, das SageMathcloud uns geschaffen, soll uns niemand vertreiben k\"{o}nnen.
 
\section*{}
Funding: This work was partially supported by the Universidad Nacional de San Luis [grant PROIPRO 03-2216] and Secretaria de Pol\'{\i}ticas Universitarias, Ministerio de Educaci\'{o}n, Rep\'{u}blica Argentina, Programa Redes Interuniversitarias IX: "Red Argentino-Brasile\~{n}a de Teor\'{\i}a Algebraica y Algor\'{\i}tmica de Grafos, Etapa 2016". RESOL-2016-1968-E-APN-SECPU-ME.

\section*{References}

\bibliographystyle{apalike}

\bibliography{TAGcitas}

\end{document}